\newcommand{\R}{\mathbb{R}}
\newcommand{\Z}{\mathbb{Z}}
\newcommand{\N}{\mathbb{N}}
\newcommand{\E}{{\mathbb E}}
\newcommand{\scal}[2]{\langle #1, #2 \rangle}
\newcommand{\grad}{\nabla}
\newcommand{\Xn}{Z^{(n)}}
\renewcommand{\P}{\mathbb{P}}
\newcommand{\PP}{\mathsf{P}}
\newcommand{\EE}{\mathsf{E}}
\newcommand{\TT}{\mathsf{T}}
\newcommand{\LL}{\mathsf{L}}
 \renewcommand\widering[1]{\ring{#1}}
\newtheorem{thm}{Theorem}[section]
\newtheorem{cor}{Corollary}[section]
\newtheorem{lem}{Lemma}[section]
\newtheorem{con}{Conjecture}[section]
\newtheorem{rem}{Remark}[section]
\theoremstyle{definition}
\newtheorem{defi}[thm]{Definition}
\numberwithin{equation}{section}
\title[Large deviations for the local fluctuations of random walks]{Large deviations for  the local fluctuations of random walks and new insights into the ``randomness'' of Pi}
\begin{document}

\author{Julien Barral}
\address[J. Barral]{LAGA (UMR 7539), D\'epartement de Math\'ematiques, Institut Galil\'ee, Universit\'e
 Paris 13, 99 avenue Jean-Baptiste Cl\'ement , 93430  Villetaneuse, France}
\thanks{Corresponding author: Julien Barral ({\tt barral@math.univ-paris13.fr}). The authors were partially supported by the French National Research Agency  Project ``DMASC''}
\email{barral@math.univ-paris13.fr}
\author{Patrick Loiseau}
\address[P. Loiseau]{EURECOM, 2229 route des cr\^etes, BP 193, F-06560 Sophia-Antipolis cedex, France}
\email{patrick.loiseau@eurecom.fr}

\begin{abstract}
We establish large deviations properties valid for almost every sample path of a class of stationary mixing processes $(X_1,\dots, X_n,\dots)$. These properties are inherited from those of  $S_n=\sum_{i=1}^nX_i$ and describe how the local fluctuations  of almost every realization of $S_n$ deviate from the almost sure behavior. These results apply to the fluctuations of Brownian motion, Birkhoff averages on hyperbolic dynamics, as well as branching random walks. Also, they lead to new insights into the ``randomness'' of the digits of expansions  in integer bases of Pi.  We formulate a new conjecture, supported by numerical experiments, implying the normality of Pi. 
\end{abstract}

\keywords{Large deviations; random walks; mixing processes; hyperbolic dynamics, random coverings, normal numbers}

\maketitle

\section{Introduction}
\label{sec.introduction}

Given a sequence of  i.i.d. real valued random variables $(X_n)_{n\ge 1}$, large deviations theory provides a precise estimate of the probability that the random walk $S_n=\sum_{i=1}^nX_i$ deviates from its almost sure asymptotic behavior, as long as $X$ possesses finite exponential moments on a non trivial domain. In particular, if $\Lambda(\lambda)=\log \E(\exp(\lambda X_1))$ is finite over an interval $\mathcal D_\Lambda$ whose interior contains 0 then (see Cramer's theorem in \cite{DZ} Ch 2.2 or G\"artner-Ellis' theorem at the end of this section) 
\begin{equation}\label{LDclassic}
\forall \ x\in \Lambda'(\widering{\mathcal{D}}_\Lambda), \ \lim_{\epsilon\to 0}\lim_{n\to\infty}\frac{1}{n}\log\mu_n([x-\epsilon,x+\epsilon])=-\Lambda^*(x),
\end{equation}
where $\mu_n$ is the  distribution of $S_n/n$ and $\Lambda^*(x)=\sup_{\lambda\in \R} \{ \lambda x-\Lambda(\lambda)\}$. Notice that the case $x=\Lambda'(0)=\E(X_1)$ corresponds to the almost sure asymptotic behavior of $S_n(\omega)$ given by the strong law of large numbers: $S_n(\omega)/n\to\E(X_1)$ as $n\to\infty$, and $\Lambda^*(\E(X_1))=0$. 

\medskip

In this paper, we show that this large deviation principle (LDP) is transfered to almost every path of the random walk, though the behavior of $S_N(\omega)/N$ is prescribed by the strong law of large numbers.  To see this, we look at the deviations from this behavior over the blocks $(X_{(j-1)n+1}(\omega),\cdots,X_{jn}(\omega))$ of length $n\ll N$ picked up in $(X_i(\omega))_{1\le i\le N}$. Specifically, we define 
$$
\Delta S_n(j,\omega)=S_{jn}(\omega)-S_{(j-1)n}(\omega)
$$
and for $N=k(n)\cdot n$ with $k(n)\to\infty$ as $n\to\infty$, we seek a LDP providing the almost sure asymptotic  behavior of $\displaystyle \#\big \{1\le j\le k(n):  \Delta S_n(j,\omega)\in [n(x-\epsilon),n(x+\epsilon)]\big\}$ and its possible connection with \eqref{LDclassic}. Such a LDP would describe the local fluctuations of $S_n$. We shall obtain the following result as a special case of a more general statement (Theorem~\ref{th3}). We consider the random sequence of Borel measures $(\mu_n^\omega)_{n\ge 1}$ on $\R$ defined as 
$$
\mu_n^\omega(B)=\frac{ \#\big \{1\le j\le k(n):  \Delta S_n(j,\omega)/n\in B\big\}}{k(n)}\quad(\text{for every Borel set } B),  
$$
as well as their logarithmic generating functions 
$$
\Lambda_n^\omega(\lambda)= \frac{1}{n}\log \int_\R\exp (n\lambda x)\,{\rm d}\mu^\omega_n(x)=\displaystyle \frac{1}{n}\log \Big (\frac{1}{k(n)}\sum_{j=1}^{k(n)}\exp (\lambda\Delta S_{n}(j,\omega))\Big ) \quad (\lambda\in\R).
$$

\begin{thm}\label{th1} Let $(k(n))_{n\ge 1}$ be a sequence of positive integers. Let  $\lambda_0\in\widering{\mathcal{D}}_\Lambda$ and denote $\Lambda'(\lambda_0)$ as $x_0$.  

\begin{enumerate}\item If $\displaystyle \liminf_{n\to\infty} \frac{\log k(n)}{n}>\Lambda^*(x_0)$ then there exists a neighborhood $U $ of $\lambda_0$ in $\widering{\mathcal{D}}_\Lambda$ such that, with probability 1,  for all $\lambda\in U$
\begin{equation}\label{laplace}
\lim_{n\to\infty}  \Lambda_n^\omega(\lambda)=\Lambda(\lambda),
\end{equation}
and  
\begin{eqnarray*}
\lim_{\varepsilon\to 0}\lim_{n\to\infty}\frac{1}{n}\log\mu^\omega_n([x_0-\epsilon,x_0+\epsilon])=-\Lambda^*(x_0).
\end{eqnarray*}
\item If $\displaystyle \limsup_{n\to\infty} \frac{\log k(n)}{n}<\Lambda^*(x_0)$  and $\epsilon$ is small enough, with probability 1, for $n$ large enough the set  $\big \{1\le j\le k(n): \Delta S_n(j,\omega)\in [n(x_0-\epsilon),n(x_0+\epsilon)]\big\}$  is empty. 

\item If $\displaystyle \lim_{n\to\infty} \frac{\log k(n)}{n}=\Lambda^*(x_0)$ then, with probability 1, for all $t\ge 1$ we have 
\begin{equation}\label{laplacebis}
\lim_{n\to\infty}  \Lambda_n^\omega(t\lambda_0)=\Lambda(\lambda_0)+(t-1)\lambda_0 x_0. 
\end{equation}

\end{enumerate}
\end{thm}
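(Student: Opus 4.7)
The plan is to exploit the i.i.d.\ structure of the blocks: $\Delta S_n(1,\omega),\dots,\Delta S_n(k(n),\omega)$ are independent with common law $S_n$, so $\mu_n^\omega$ is the empirical measure of $k(n)$ i.i.d.\ samples of $S_n/n$. Each of the three parts then amounts to transferring Cram\'er's LDP~\eqref{LDclassic} to almost every trajectory by Markov~$+$~Borel--Cantelli, the hypothesis on $\log k(n)/n$ selecting the regime.

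For part~(1), pick a neighborhood $U$ of $\lambda_0$ in $\widering{\mathcal{D}}_\Lambda$ on which $\Lambda^*(\Lambda'(\lambda))<\liminf_n \log k(n)/n$ (possible by continuity of $\Lambda^*\circ\Lambda'$). For fixed $\lambda\in U$, Markov on the nonnegative variable $\sum_j e^{\lambda\Delta S_n(j)}$ with mean $k(n)e^{n\Lambda(\lambda)}$ gives
\[\P\bigl(\Lambda_n^\omega(\lambda)\ge \Lambda(\lambda)+\delta\bigr)\le e^{-n\delta},\]
which is summable. For the matching lower bound, put $x=\Lambda'(\lambda)$; the count $N_n$ of blocks with $\Delta S_n(j)/n\in(x-\eta,x+\eta)$ is binomial with parameter $p_n\ge e^{-n(\Lambda^*(x)+\eta)}$ (Cram\'er's lower bound), so $k(n)p_n$ explodes exponentially and a Chernoff bound yields $N_n\ge k(n)p_n/2$ a.s.\ eventually. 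Then $\sum_j e^{\lambda\Delta S_n(j)}\ge N_n e^{\lambda n x-|\lambda|n\eta}$ forces $\liminf_n\Lambda_n^\omega(\lambda)\ge\Lambda(\lambda)-(1+|\lambda|)\eta$ a.s. Intersecting the full-measure events over a countable dense subset of $U$ and invoking convexity of $\lambda\mapsto\Lambda_n^\omega(\lambda)$ promotes pointwise convergence to uniform convergence on compact subsets of $U$; the G\"artner--Ellis theorem applied pathwise then delivers the announced decay of $\mu_n^\omega([x_0-\epsilon,x_0+\epsilon])$. Part~(2) is a one-line Borel--Cantelli: the union bound together with Cram\'er's upper bound gives $\P(\exists j:\Delta S_n(j)/n\in [x_0-\epsilon,x_0+\epsilon])\le k(n)e^{-n(\Lambda^*(x_0)-o(1))}$, which is summable under the hypothesis once $\epsilon$ is small.

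Part~(3) is the delicate one. Assume $\lambda_0>0$ (the case $\lambda_0<0$ is symmetric, with $\min$ replacing $\max$; $\lambda_0=0$ is trivial). The first step is that $M_n:=\max_j \Delta S_n(j,\omega)/n$ converges a.s.\ to $x_0$: $M_n\le x_0+\delta$ eventually is the part~(2) argument, while $M_n\ge x_0-\delta$ eventually follows from $\P(M_n<x_0-\delta)\le \exp(-k(n)q_n)$ with $k(n)q_n\ge e^{n(\Lambda^*(x_0)-\Lambda^*(x_0-\delta)-o(1))}$ growing exponentially. The lower bound on $\Lambda_n^\omega(t\lambda_0)$ is then immediate:
\[\Lambda_n^\omega(t\lambda_0)\ge t\lambda_0 M_n-\frac{\log k(n)}{n}\longrightarrow t\lambda_0 x_0-\Lambda^*(x_0)=\Lambda(\lambda_0)+(t-1)\lambda_0 x_0.\]

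The main obstacle is the matching upper bound, since the naive $\sum_j e^{t\lambda_0\Delta S_n(j)}\le k(n)e^{t\lambda_0 n(x_0+\delta)}$ yields only $t\lambda_0 x_0$, missing the $-\Lambda^*(x_0)$ term. The remedy is a truncated exponential-tilt estimate. Set $Y_j:=e^{t\lambda_0\Delta S_n(j)}\mathbf{1}_{\{\Delta S_n(j)/n\le x_0+\delta\}}$, so that $\sum_j Y_j=\sum_j e^{t\lambda_0\Delta S_n(j)}$ eventually almost surely by the first step. A change of measure at tilt $t\lambda_0$ writes
\[\E Y_1 = e^{n\Lambda(t\lambda_0)}\,\P^{t\lambda_0}(S_n/n\le x_0+\delta),\]
and since the tilted mean $\Lambda'(t\lambda_0)$ strictly exceeds $x_0+\delta$ for $t>1$ and $\delta$ small, Cram\'er's upper bound under $\P^{t\lambda_0}$ produces $\E Y_1\le e^{n[t\lambda_0(x_0+\delta)-\Lambda^*(x_0+\delta)+\epsilon]}$ (the case $t=1$ is easier and direct). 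Markov $+$ Borel--Cantelli on $\sum_j Y_j$ then yields $\limsup_n\Lambda_n^\omega(t\lambda_0)\le t\lambda_0(x_0+\delta)-\Lambda^*(x_0+\delta)+2\epsilon$ a.s., and letting $\delta,\epsilon\to 0$ closes the upper bound for each fixed $t\ge 1$. Simultaneity over $t\ge 1$ follows from countable-dense pointwise convergence combined with convexity of $t\mapsto\Lambda_n^\omega(t\lambda_0)$ and the affineness of the target in $t$.
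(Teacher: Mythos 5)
Your proposal is correct, but it takes a genuinely different route from the paper. The paper does not prove Theorem~\ref{th1} directly: it derives it as a special case of Theorem~\ref{th3}, whose proof is built around the decomposition $\Lambda_n^{(n),\omega}(\lambda)=\Lambda^{(n)}_n(\lambda)+\frac{1}{n}\log\bigl(1+\frac{1}{k(n)}\sum_j Z^{(n)}_j(\lambda)\bigr)$ with $Z^{(n)}_j(\lambda)=\exp(\scal{\lambda}{S_n\Phi_n(T^{n(j-1)}X)}-n\Lambda^{(n)}_n(\lambda))-1$, followed by $L^p$-moment bounds ($1<p<2$) for $\sum_j Z^{(n)}_j(\lambda)$ via Rio's inequality (or von Bahr--Esseen in the i.i.d.\ case) and Borel--Cantelli; this machinery is needed because Theorem~\ref{th3} treats mixing sequences. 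You instead exploit the i.i.d.\ structure head-on: a bare Markov inequality for the upper bound in~(1), a binomial Chernoff bound on the occupation count $N_n$ for the lower bound, and for~(3) a convergence-of-maxima argument $M_n\to x_0$ together with a truncated exponential tilt. Your route is more elementary and more probabilistically transparent in the i.i.d.\ case, but it does not generalize to weak dependence; the paper's route is heavier but covers Theorem~\ref{th3} as well. The biggest contrast is part~(3): the paper handles it through a short convexity/superadditivity argument ($P_n^\omega(s(1-\eta))\le sP_n^\omega(1-\eta)+(s-1)\log(k_n)/n$, plus $P_n^\omega(t)\ge P_n^\omega(1-\eta)+(t-1+\eta)(P_n^\omega)'(1-\eta)$, then $\eta\to0$), whereas your truncated-tilt estimate is longer but makes the mechanism visible; both are valid. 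One minor gap worth flagging: for~(3) your argument needs strict convexity of $t\mapsto\Lambda(t\lambda_0)$ at $1$ (so that $\Lambda'(t\lambda_0)>x_0$ for $t>1$); the degenerate case where $X$ is a.s.\ constant (and $\lambda_0\ne0$) is not covered by your ``$\lambda_0=0$ is trivial'' remark and should be dispatched separately, as the paper notes in its Remark~2.1(2).
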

\begin{rem}
{\rm  (1) The previous result will be extended to weakly dependent sequences such that $\Lambda(\lambda)=\lim_{n\to\infty} \frac{1}{n}\log \E(\exp(\lambda S_n))$ exists as $n$ tend to $\infty$ for each $\lambda$ in an open interval. For such sequences, one also has a strong law of large numbers so that for each $n_0\ge 1$, one has 
$$
\lim_{k\to\infty}  \displaystyle \frac{1}{n_0}\log \Big (\frac{1}{k}\sum_{j=1}^{k}\exp (\lambda\Delta S_{n_0}(j,\omega))\Big )=\frac{1}{n_0}\log \E(\exp(\lambda S_{n_0})),
$$
hence
\begin{equation}\label{SLLN}
\lim_{n\to\infty}\lim_{k\to\infty}  \displaystyle \frac{1}{n}\log \Big (\frac{1}{k}\sum_{j=1}^{k}\exp (\lambda\Delta S_n(j,\omega))\Big )=\Lambda(\lambda).
\end{equation}
In Theorem~\ref{th2} we give, in terms of the growth of $\log(k)/n$, a fine measurement of  how $\frac{1}{n}\log \Big (\frac{1}{k}\sum_{j=1}^{k}\exp (\lambda\Delta S_n(j,\omega))\Big )$ is close to $\Lambda(\lambda)$. 

\medskip

\noindent
(2) Theorem~\ref{th1} cannot be obtained as a consequence of \eqref{SLLN}.
}
\end{rem}

Let us show how Theorem~\ref{th1} applies to the description of the dyadic expansion of real numbers. For $t\in [0,1]$ and $i\ge 1$ denote by $t_i$ the $i^{th}$ digit of the dyadic expansion of $t$ (the dyadic points,  which have two expansions, are of no influence in our study): $t=\sum_{i\ge 1} t_i 2^{-i}$. Let $\P_p$ stand for the Bernoulli product of parameter $p\in (0,1)$,  so that the $X_i(t)=t_i$ are i.i.d. Bernoulli variables of parameter $p$ under $\P_p$ ($\P_{1/2}$ is the Lebesgue measure).  By the strong law of large numbers, for $\P_p$-almost every $t$, $\lim_{N\to\infty}\sum_{i=1}^Nt_i/N=p$. Here, $\mathcal {D}_\Lambda=\R$, $\Lambda(\lambda)=\log(1-p+p\exp(\lambda))$, $\Lambda'(\R)=(0,1)$, and ${\Lambda}^{*}(x)=x\log(x/p)+(1-x)\log((1-x)/(1-p))=H(\P_{x}|\P_p)$ for all $x\in (0,1)$. As a consequence of  Theorem~\ref{th1}(1), if $\displaystyle \liminf_{n\to\infty} \frac{\log k(n)}{n}>-\min(\log (p),\log (1-p))$, for $\P_p$-almost every $t$, for all $x\in(0,1)$
\begin{eqnarray*}
\lim_{\varepsilon\to 0}\lim_{n\to\infty}\frac{1}{n}\log \Big (\frac{1}{k(n)}\displaystyle \#\Big \{1\le j\le k(n): \Big ( \sum_{(j-1)n<i\le jn}t_i\Big )\in [n(x-\epsilon),n(x+\epsilon)]\Big\}\Big )=-\Lambda^*(x).
\end{eqnarray*}

\medskip

Once one has such a result, it is very tempting to investigate whether or not it highlights questions related to the distribution of digits for numbers suspected to be normal in a given integer basis $m\ge 2$, i.e. such that for every $n_0\ge 1$, for every finite sequence $(\varepsilon_1,\dots, \varepsilon_{n_0})\in \{0,\dots, m-1\}^{n_0}$, the frequency of the occurrence of  $(\varepsilon_1,\dots, \varepsilon_{n_0})$ in the $m$-adic expansion of $t=\sum_{i\ge 1}t_im^{-i}$ is equal to $m^{-n_0}$, i.e.
\begin{equation}\label{frequency}
\lim_{k\to\infty}\frac{1}{k}\#\{1\le i\le k: (t_i,\dots,t_{i+n_0-1})=(\varepsilon_1\cdots \varepsilon_{n_0})\}= m^{-n_0}.
\end{equation}
 Indeed, for such numbers like the fractional part of Pi, numerical experiments support the conjecture that \eqref{frequency} holds, showing that these numbers  share statistical properties with almost every realization of a sequence $X$ of independent random variables uniformly distributed in $\{0,\dots, m-1\}$, and in this sense are ``random''.  The recent discovery of the so-called BBP algorithm \cite{BBP}, to compute the $n^{th}$ digit without computing the preceding digits, has opened new perspectives  on this question \cite{Bailey}. Theorem~\ref{th1} leads to strengthen the conjecture about the  ``randomness'' of Pi: the sequence of digits of Pi in a given integer basis obeys the same large deviations properties as almost every realization of $X$ (see conjecture~\ref{conjec} for a precise statement). This conjecture, which implies the normality of Pi, is supported by numerical experiments presented in Section~\ref{Pi}. 
  
\medskip

We will obtain extensions of Theorem~\ref{th1} valid for a class of $\R^d$-valued stationary mixing processes. We will also obtain a general result concerning the transfer of  LDPs valid for random walks taking values in a separable normed vector space to LDPs valid for the local fluctuations of almost every  realization of such random walks. These results are stated in Section~\ref{State} and illustrated with several natural examples in Section~\ref{examples}, namely  Brownian motion, Birkhoff sums on symbolic spaces and some of their geometric realizations, branching random walks on Galton-Watson trees, and Poissonian random walks on Poisson point processes.  The proofs of the main results are given in Sections~\ref{Proofs} and~\ref{pfbrown}. 


We end this section by recalling general facts about large deviations theory.

\subsection*{General facts about large deviations theory}\label{state0} 

Let $\mathcal Y$ be a topological space and $B_{\mathcal Y}$ stand for the completed Borel $\sigma$-field. Let $(\mu_n)_{n\ge 1}$ be a sequence of probability measures on $(\mathcal Y,B_{\mathcal Y})$. Let $I:\mathcal Y\to [0,\infty]$ be a lower semi-continuous function. The domain of $I$ is defined as $\mathcal{D}_I=\{x:I(x)<\infty\}$. 

One says (see \cite{DZ} Ch. 1.2) that $(\mu_n)_{n\ge 1}$ satisfies in $\mathcal Y$ the LDP with rate function $I$ if for all set $\Gamma\in B_{\mathcal Y}$. 
\begin{equation}\label{ldpgen}
-\inf_{x\in \widering\Gamma }I(x)\le \liminf_{n\to\infty} \frac{1}{n}\log \mu_n(\Gamma)\le \limsup_{n\to\infty}  \frac{1}{n}\log \mu_n(\Gamma)\le -\inf_{x\in \overline\Gamma }I(x).
\end{equation}
The function $I$ is said to be a  good rate function if, moreover, for any $\alpha \in \R_+$ the level set $\{x\in\mathcal Y:I(x)\le \alpha\}$ is compact. 

One says that $(\mu_n)_{n\ge 1}$ satisfies in $\mathcal Y$ the weak LDP with rate function $I$ if the upper bound in \eqref{ldpgen} holds when $\Gamma$ is a compact subset of $\mathcal Y$.

The sequence $(\mu_n)_{n\ge 1}$ is said to be exponentially tight if for every $\alpha<\infty$ there exists a compact set $K_\alpha\subset\mathcal Y$ such that $\limsup_{n\to\infty} \frac{1}{n}\log\mu_n(K_\alpha^c)\le -\alpha$. In this case, if $(\mu_n)_{n\ge 1}$ satisfies the weak LDP with rate function $I$ then it satisfies the LDP with good rate function $I$ (see \cite{DZ} Lemma 1.2.18).

Large deviation principles have been derived successfully for various stochastic processes  (see, e.g. \cite{Ellis, S,V,DZ}) as well as for dynamical systems (see, e.g. \cite{OP,OP2,Young}).

\subsection*{The G\"artner-Ellis theorem}  It is sometimes possible to derive, or relate, such a principle with the logarithmic generating functions of the measures $\mu_n$ whenever $\mathcal Y$ is a topological vector space. In this paper, when we use such a connection, we take $\mathcal Y=\R^d$ ($d\ge 1$). Then, the main tool is the G\"artner-Ellis theorem whose statement requires the following assumptions and definitions (see \cite{DZ} Ch. 2.3, and \cite{DZ} Ch. 4.5.3 for a version in topological vector spaces). Let $\scal{\cdot}{\cdot}$ stand for the canonical scalar product on $\R^d$.  Let $(\mu_n)_{n\ge 1}$ be a sequence of probability measures on $(\R^d,B_{\R^d})$. For each $n\ge 1$ let
$$
\Lambda_n({\lambda})=\frac{1}{n}\log \int_{\R^d} \exp (n\scal{\lambda}{x})\, {\rm d}\mu_n(x)\quad (\lambda\in \R^d).
$$

Assume {\bf (A):} For each $\lambda\in\R^d$, $
\Lambda({\lambda})=\lim_{n\to\infty}\Lambda_n({\lambda})$ 
exists as an extended real number. Further, the origin belongs to the interior of $\mathcal{D}_\Lambda=\{\lambda\in\R^d:\Lambda(\lambda)<\infty\}$. 

\medskip

The Fenchel-Legendre transform of $\Lambda$ is defined as 
$$
\Lambda^*(x)=\sup\{\scal{\lambda}{x}-\Lambda (\lambda):\lambda\in\R^d\}\quad (x\in\R^d),
$$
and one sets $\mathcal{D}_{\Lambda^*}=\{x\in\R^d: \Lambda^*(x)<\infty\}$. 
\begin{defi}
$y\in\R^d$ is an exposed point of $\Lambda^*$ if for some $\lambda\in\R^d$ and all $x\neq y\in \R^d$, $\scal{\lambda}{y}-\Lambda^*(y)>   \scal{\lambda}{x}-\Lambda^*(x)$. Such a $\lambda$ is called an exposing hyperplane.
\end{defi}
\begin{defi} Let $L:\R^d\to \R\cup\{\infty\}$ and $\mathcal D_L=\{\lambda\in\R^d: L(\lambda)<\infty\}$. The function $L$ is said essentially smooth if:
\begin{enumerate}[(a)]
\item $\widering {\mathcal{D}}_L$ is non-empty.
\item $L$ is differentiable throughout $\widering {\mathcal{D}}_L$. 
\item $L$ is steep, namely, $\lim_{n\to\infty}|\grad L(\lambda_n)|=\infty$ whenever $(\lambda_n)_{n\ge 1}$ is a sequence in  $\widering {\mathcal{D}}_L$ converging to a boundary point of $\widering {\mathcal{D}}_L$.
\end{enumerate}
\end{defi}

\begin{rem}
Corollary 25.1.2 of \cite{Roc} ensures that the exposed points of $\Lambda^*$ are precisely those $x$ of the form $\grad\Lambda (\lambda)$ for some $\lambda\in \widering {\mathcal{D}}_\Lambda$. Moreover, Theorem 25.5 of \cite{Roc} ensures  that $\Lambda$ is differentiable almost everywhere in  $\lambda\in \widering {\mathcal{D}}_\Lambda$, and if it is differentiable everywhere in $\lambda\in \widering {\mathcal{D}}_\Lambda$, then it is $C^1$. 
\end{rem}

\begin{thm}{\bf (G\"artner-Ellis)}\label{G-E} Under the above assumption {\bf (A)}:
\begin{enumerate}
\item For any closed set $F\subset \R^d$,
$$
\limsup_{n\to\infty} \frac{1}{n}\log \mu_n(F)\le  -\inf_{x\in F}\Lambda^*(x).
$$
\item For any open set $G\subset\R^d$, 
$$
\liminf_{n\to\infty} \frac{1}{n}\log \mu_n(G)\ge -\inf_{x\in G \cap \mathcal F}\Lambda^*(x),
$$
where $\mathcal{F}$ is the set of exposed points of $\Lambda^*$ whose exposing hyperplane belongs to $\widering {\mathcal{D}}_\Lambda$.

\item If $\Lambda$ is essentially smooth and lower semi-continuous, then the LDP with good rate function $\Lambda^*$ holds in $\R^d$ for $(\mu_n)_{n\ge 1}$. 
\end{enumerate}
\end{thm}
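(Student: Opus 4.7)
The strategy is the classical three-step route for Gärtner--Ellis.

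\emph{Exponential tightness and the upper bound (1).} Since $0\in\widering{\mathcal D}_\Lambda$, choosing $r>0$ so that $\pm re_i\in\widering{\mathcal D}_\Lambda$ for every coordinate direction and applying Chebyshev's exponential inequality
\[
\mu_n(\{x\in\R^d:\scal{\lambda}{x}\ge R\})\le \exp\!\big(n\Lambda_n(\lambda)-nR\big)
\]
to $\lambda=\pm re_i$ shows that $(\mu_n)_{n\ge 1}$ is exponentially tight. The same inequality, applied ball by ball with $\lambda$ nearly attaining $\Lambda^*(x)=\sup_\lambda\{\scal{\lambda}{x}-\Lambda(\lambda)\}$ and combined with a finite subcover of a given compact $K$, yields $\limsup_{n\to\infty}\frac{1}{n}\log\mu_n(K)\le -\inf_{x\in K}\Lambda^*(x)$ after shrinking the cover radius to zero. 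Exponential tightness then upgrades the bound from compact to closed sets, which is (1).

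\emph{Lower bound at exposed points (2).} Fix $y\in G$ exposed by some $\eta\in\widering{\mathcal D}_\Lambda$. The natural device is the exponential tilt
\[
\widetilde\mu_n(A)=e^{-n\Lambda_n(\eta)}\int_A e^{n\scal{\eta}{x}}\,\mathrm{d}\mu_n(x).
\]
Restricting to a ball $B(y,\delta)\subset G$,
\[
\mu_n(B(y,\delta))\ge \exp\!\big(n(\Lambda_n(\eta)-\scal{\eta}{y}-\delta|\eta|)\big)\,\widetilde\mu_n(B(y,\delta)),
\]
so the problem reduces to proving $\widetilde\mu_n(B(y,\delta))\to 1$. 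For this I would verify that $(\widetilde\mu_n)_{n\ge 1}$ still satisfies assumption (A) with limit $\widetilde\Lambda(\theta)=\Lambda(\eta+\theta)-\Lambda(\eta)$ (so $0\in\widering{\mathcal D}_{\widetilde\Lambda}$) and Fenchel--Legendre dual $\widetilde\Lambda^*(x)=\Lambda^*(x)-\scal{\eta}{x}+\Lambda(\eta)$. The exposed-point property of $y$ with hyperplane $\eta$ says precisely that $\widetilde\Lambda^*$ attains a strict global minimum at $y$; feeding the upper bound just proved into $\widetilde\mu_n(B(y,\delta)^c)$, and using lower semi-continuity of $\widetilde\Lambda^*$ together with exponential tightness to secure a uniform positive gap outside the ball, forces $\widetilde\mu_n(B(y,\delta)^c)\to 0$ exponentially. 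Convex biduality of $\Lambda$ at the interior point $\eta$ gives $\Lambda^*(y)=\scal{\eta}{y}-\Lambda(\eta)$, and letting $\delta\to 0$ produces $\liminf_{n\to\infty}\frac{1}{n}\log\mu_n(G)\ge -\Lambda^*(y)$ for every $y\in G\cap \mathcal F$, hence (2).

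\emph{Essential smoothness (3), and the main obstacle.} Under essential smoothness and lower semi-continuity, the convex-analytic facts recalled in the Remark (in particular Rockafellar's Corollary~25.1.2) identify $\mathcal F$ with $\grad\Lambda(\widering{\mathcal D}_\Lambda)$, and steepness makes this set rich enough that every $y$ with $\Lambda^*(y)<\infty$ is approximated by an exposed $y'$ with $\Lambda^*(y')\to\Lambda^*(y)$. This closes the gap between the lower and upper bounds to $-\inf_G\Lambda^*$, and exponential tightness upgrades $\Lambda^*$ to a good rate function. The technical heart of the argument is the exponential decay $\widetilde\mu_n(B(y,\delta)^c)\to 0$ in the tilted picture: checking that the strict minimum of $\widetilde\Lambda^*$ at $y$ is preserved \emph{uniformly} outside the ball, and that the tilted family really inherits assumption (A) on a full neighborhood of the origin, is where the interplay between convex analysis near $\eta\in\widering{\mathcal D}_\Lambda$ and exponential tightness of $(\widetilde\mu_n)$ must be handled with care.
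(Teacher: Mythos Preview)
The paper does not prove this statement: Theorem~\ref{G-E} is quoted as background from \cite{DZ}, Ch.~2.3, with no proof supplied. There is therefore nothing in the paper to compare your argument against.

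That said, your sketch is the standard Dembo--Zeitouni route and is essentially correct. A couple of points worth tightening if you flesh it out: in step (2), the claim ``$\widetilde\mu_n(B(y,\delta)^c)\to 0$ exponentially'' relies on knowing that $\inf_{x\notin B(y,\delta)}\widetilde\Lambda^*(x)>0$; this is exactly where one uses that $y$ is an \emph{exposed} point (not merely a minimizer), together with lower semicontinuity of $\widetilde\Lambda^*$ and exponential tightness to reduce to a compact complement. In step (3), the passage from $\mathcal F=\grad\Lambda(\widering{\mathcal D}_\Lambda)$ to ``every $y$ with $\Lambda^*(y)<\infty$ is approximable by exposed points with $\Lambda^*(y')\to\Lambda^*(y)$'' is the substantive convex-analytic step (it uses that $\Lambda^*$ is the conjugate of an essentially smooth, lower semicontinuous function, hence strictly convex on the relative interior of its domain, and one invokes Rockafellar's results on the range of the gradient map); you have identified it correctly as the crux, but it deserves more than a sentence if you are writing a self-contained proof.
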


A local version of this result is the following. It can be deduced from the proof of G\"artner-Ellis' theorem. Throughout, $B(\lambda,r)$ stands for the closed ball of center $\lambda$ and radius $r$.
\begin{thm}\label{LDPloc}
Suppose that $\Lambda=\lim_{n\to\infty}\Lambda_n$ exists and is finite over an open set $\mathcal{D}$. At any point $\lambda$ of $\mathcal D$ at which $\Lambda$ is differentiable, one has 
$$
\lim_{\epsilon\to 0}\lim_{n\to\infty}\frac{1}{n}\log \mu_n(B(\grad\Lambda(\lambda),\epsilon))=-\Lambda^*(\grad \Lambda (\lambda)).
$$
\end{thm}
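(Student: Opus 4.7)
\medskip

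\noindent\textbf{Proof plan.} Write $\lambda_0$ for the chosen point of $\mathcal D$ and $x_0=\grad\Lambda(\lambda_0)$. The plan is to prove matching upper and lower bounds separately, using the two standard ingredients behind G\"artner-Ellis: a Chebyshev/Chernoff inequality for the upper bound, and an exponential change of measure centered at $\lambda_0$ for the lower bound. For the upper bound, on $B(x_0,\epsilon)$ one has $e^{n\scal{\lambda'}{x}}\ge e^{n\scal{\lambda'}{x_0}-n|\lambda'|\epsilon}$ for every $\lambda'\in\mathcal D$, and therefore
\[
\tfrac{1}{n}\log\mu_n(B(x_0,\epsilon))\le \Lambda_n(\lambda')-\scal{\lambda'}{x_0}+|\lambda'|\epsilon.
\]
Specializing to $\lambda'=\lambda_0$ and letting $n\to\infty$ and then $\epsilon\to 0$ yields $\limsup_n\le -(\scal{\lambda_0}{x_0}-\Lambda(\lambda_0))$. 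Each $\Lambda_n$ being convex (as the log of an MGF), so is $\Lambda$ on $\mathcal D$; together with $\grad\Lambda(\lambda_0)=x_0$, the convex subgradient inequality forces $\lambda_0$ to be a global maximizer of $\lambda'\mapsto \scal{\lambda'}{x_0}-\Lambda(\lambda')$ (extending $\Lambda$ by $+\infty$ outside $\mathcal D$), so the upper bound equals $-\Lambda^*(x_0)$.

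For the lower bound I would introduce the tilted probabilities $\widetilde\mu_n$ defined by $\frac{\ud\widetilde\mu_n}{\ud\mu_n}(x)=e^{n\scal{\lambda_0}{x}-n\Lambda_n(\lambda_0)}$. Inverting the tilt on $B(x_0,\epsilon)$ gives
\[
\mu_n(B(x_0,\epsilon))\ge e^{n\Lambda_n(\lambda_0)-n\scal{\lambda_0}{x_0}-n|\lambda_0|\epsilon}\,\widetilde\mu_n(B(x_0,\epsilon)).
\]
Taking $\tfrac{1}{n}\log$, then $\liminf_n$, and then $\epsilon\to 0$, matters reduce to showing that $\widetilde\mu_n(B(x_0,\epsilon))$ does not decay exponentially. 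The log moment generating function of $\widetilde\mu_n$ is $\widetilde\Lambda_n(\lambda)=\Lambda_n(\lambda_0+\lambda)-\Lambda_n(\lambda_0)$, which converges on a neighborhood of $0$ to $\widetilde\Lambda(\lambda)=\Lambda(\lambda_0+\lambda)-\Lambda(\lambda_0)$, differentiable at $0$ with $\grad\widetilde\Lambda(0)=x_0$. For each unit vector $v\in\R^d$ and small $t>0$, Chernoff applied to $\widetilde\mu_n$ yields
\[
\tfrac{1}{n}\log\widetilde\mu_n(\{x:\scal{v}{x-x_0}\ge \epsilon/2\})\le \widetilde\Lambda_n(tv)-t\scal{v}{x_0}-t\epsilon/2,
\]
whose $n\to\infty$ limit $\widetilde\Lambda(tv)-t\scal{v}{x_0}-t\epsilon/2=-t\epsilon/2+o(t)$ is strictly negative for $t$ small enough, by differentiability of $\widetilde\Lambda$ at the origin. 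A compactness argument on the unit sphere then produces finitely many unit vectors $v_1,\ldots,v_N$ with $B(x_0,\epsilon)^c\subset \bigcup_{i=1}^N\{x:\scal{v_i}{x-x_0}\ge \epsilon/2\}$, and a union bound delivers $\widetilde\mu_n(B(x_0,\epsilon)^c)\to 0$ exponentially.

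The delicate step, and the one where differentiability of $\Lambda$ at $\lambda_0$ is essential, is this last concentration statement for the tilted measure: differentiability is precisely what turns the first-order expansion of $\widetilde\Lambda$ at the origin into the $o(t)$ correction that makes the Chernoff exponents strictly negative uniformly in $v$. Without it one would only know that $0$ belongs to the subdifferential of $\widetilde\Lambda-\scal{\cdot}{x_0}$ at the origin, leaving room for some direction in which $\widetilde\mu_n$ could fail to concentrate, and the lower bound would collapse.
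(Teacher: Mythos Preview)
Your argument is correct and is exactly the one the paper has in mind. The paper does not give a self-contained proof of this theorem; it only states that the result ``can be deduced from the proof of G\"artner--Ellis' theorem'' and, in the remark immediately following, records the exponential tilt $\ud\widetilde\mu_n(x)=\exp(n\scal{\lambda_0}{x}-n\Lambda_n(\lambda_0))\,\ud\mu_n(x)$ as the device that reduces the problem to a neighborhood of the origin. Your upper bound via Chernoff at $\lambda'=\lambda_0$ (together with the observation that differentiability of the convex limit $\Lambda$ at $\lambda_0$ makes $\lambda_0$ the maximizer defining $\Lambda^*(x_0)$), and your lower bound via this same tilt followed by the finite half-space covering of $B(x_0,\epsilon)^c$ and a union bound, are precisely the standard steps of the G\"artner--Ellis proof (cf.\ \cite{DZ}, proof of Theorem~2.3.6) that the paper is invoking.
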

\begin{rem}{\rm 
In Theorem~\ref{LDPloc} we do not require that $0\in\mathcal{D}$. This is because one goes back to this assumption by a standard reduction, systematically used in the proof of the lower bound part of G\"artner-Ellis' theorem, as follows. Fix any $\lambda_0\in\mathcal {D}$ and replace $\mu_n$ by the measure ${\rm d}\widetilde \mu_n (x)=\exp (n\scal{\lambda_0}{x} -n\Lambda_n(\lambda_0)){\rm d}\mu_n (x)$. Then replace $\Lambda_n({\lambda})$ by $\widetilde \Lambda_n(\lambda)=\frac{1}{n}\log \int_{\R^d} \exp (n\scal{\lambda}{x})\, {\rm d}\widetilde \mu_n(x)=\Lambda_n({\lambda+\lambda_0})-\Lambda_n({\lambda_0})$.
}\end{rem}



\section{Large deviations principle for local fluctuations of random walks}\label{State}

We need a few notation and definitions related to the notion of weak dependence. 

\medskip

Let  $(\Omega,\mathcal A, \P)$ be a probability space. Given two sub-$\sigma$-algebras $\mathcal{U}$ and $\mathcal{V}$ of $\mathcal A$, their  $\alpha$-mixing coefficient is defined as (see \cite{Rio00a} for a detailed account):
\begin{equation}
\label{eq.defphi}
\alpha (\mathcal{U}, \mathcal{V}) = \sup \left\{   \left| \P(U)\P (V) - \P (U \cap V) \right|: U \in \mathcal{U}, V \in \mathcal{V}  \right\}.
\end{equation}

\medskip

Let $(E,\mathcal T)$ be a measurable space. We consider  $X = (X_i)_{i\in\N_+}$, a stationary process defined on $(\Omega,\mathcal A, \P)$ and taking values in $E$.

For each $n\ge 1$, we define 
\begin{equation}\label{Xn}
X^{(n)}=(X^{(n)}_i)_{i\ge1}=((X_{n(i-1)+1},\dots,X_{in}))_{i\ge 1}.
\end{equation}
Let $T$ denote the shift operation on $E^{\N_+}$: 
\begin{equation*}
T(x_1,x_2,\dots) = (x_2,\cdots ).
\end{equation*}

We will assume that $X$ satisfies some mixing properties. 

The Rosenblatt \cite{Rose} mixing coefficients $(\alpha_{X,m})_{m\ge 0}$ of the sequence $(X_i)_{i\in \N_+}$ are defined as: 

\begin{equation}
\alpha_{X,0}=1/2\text{ and }\alpha_{X,m}= \sup \{\alpha( \sigma(X_k: k\le i), \sigma(X_{j}: j\ge i+m)):i\ge 1\}\quad \text{for }m \geq 1.
\end{equation}

Then, for $u\in [0,1]$, one defines 
$$
\alpha^{-1}_X(u)=\inf\{m: \alpha_{X,m}\le u\}. 
$$

\medskip

For each $n\ge 1$ we let $S_n\Phi (X)$ stand for a measurable function of $(X,TX,\dots,T^{n-1}X)$ taking values in a normed vector space $(\mathcal Y,\|\,\|)$ endowed with the completed Borel $\sigma$-field $B_{\mathcal Y}$. A typical example will be the Birkhoff sums $\sum_{j=0}^{n-1}\Phi(T^jX)$ associated with a measurable function $\Phi: E^{\N_+} \rightarrow \mathcal Y$. 

\medskip

For each $n\ge 1$, denote by $\mu_n$ the distribution of the random variable $S_n\Phi (X)/n$ (viewed under $\mathbb{P}$).

If $\mathcal Y=\R^d$, we define the sequence of logarithmic moment generating functions
\begin{equation}
\label{eq.defPn}
\Lambda_n (\lambda) =  \frac{1}{n} \log \E\exp\big ( \scal{\lambda}{S_n \Phi(X)} \big ) =  \frac{1}{n} \log \int_{\R^d}\exp(n\scal{\lambda}{x})\,{\rm d}\mu_n(x)\quad(\lambda\in \R^d).
\end{equation}

\medskip

Our results will use assumptions among the following. They are divided into three types.

\medskip
 
{\bf (1) Large deviations properties.}

\medskip

{\bf (A1)} The sequence $(\mu_n)_{n\ge 1}$ satisfies in $\mathcal Y$ the LDP with rate function denoted by $I$. 

{\bf (A1')} $\mathcal Y=\R^d$, and there exists a convex open set $\mathcal{D}$ in $\R^d$ such that 
$$
\Lambda(\lambda)= \lim_{n\to\infty} \Lambda_n(\lambda)
$$
exists and is finite for each $\lambda\in \mathcal{D}$.  
 

{\bf (A1'')} $\mathcal Y=\R^d$, and for each $\lambda\in \R^d$, $\Lambda(\lambda)=\lim_{n\to\infty} \Lambda_n(\lambda)$ exists as an extended real number, and the origin belongs to the interior of $\mathcal{D}_\Lambda=\{\lambda: \Lambda(\lambda)<\infty\}$.

\bigskip

{\bf (2) Mixing properties.}

\medskip

{\bf (A2)} $M_h=\int_0^1(\alpha_{X}^{-1}(u))^h\, {\rm d}u<\infty$ for all $h>0$. 

{\bf (A2')} There exists $\gamma>0$ and $\theta>0$ such that $\alpha_{X,m}=O(\exp(-\gamma m^{\theta}))$.  

\bigskip

{\bf (3) Approximation properties.}

\medskip

{\bf (A3)} There exists a sequence $(S_n\Phi_n)_{n\ge 1}$ of functions from $E^{\N_+}$ to $\mathcal Y$ such that each $S_n\Phi_n$ depends only on the $2n$ first coordinates and 
$$
\delta_n=\sup_{z\in E^{\N_+}}\|S_n\Phi (z)-S_n\Phi_n(z)\|/n=o(1)\text{ as $n\to\infty$}.
$$

\medskip

Condition {\bf (A3)} holds in particular if $S_n\Phi(X)$ is given by the Birkhoff sums of a function $\Phi$ defined on $E^{\N_+}$ and there exists a sequence of functions $(\Phi_n)_{n\ge 1}$ defined on $E^{\N_+}$ depending on the $n$ first coordinates only, such that $\sup_{z\in E^{\N_+}}\|\Phi (z)-\Phi_n(z)\|=o(1)$ as $n$ tends to $\infty$. If there exists an integer $p\ge 1$ such that $\Phi$ depends on the $p$ first coordinates only, then  one can take $\Phi_n=\Phi$ for $n$ large enough, and then $\delta_n=0$.

\bigskip

Now, we introduce the family of (random) probability measures for which we obtain large deviations results. 

We fix an increasing sequence of positive integers $(k(n))_{n\ge 1}$, and for every $\omega\in \Omega$ and $n\ge 1$, we define
$$
\mu^\omega_n=\frac{1}{k(n)} \sum_{j=1}^{k(n)} \delta_{x_{n,j}(\omega)}\text{ with } x_{n,j}(\omega)=S_n\Phi(T^{(j-1)n}X(\omega))/n.
$$
In other words, for each Borel set $B\subset \R^d$, we have 
$$
\mu^\omega_n(B)=\frac{\displaystyle \#\big \{1\le j\le k(n): S_n \Phi (T^{(j-1)n}X(\omega))/n\in B\big\}}{k(n)}.
$$
If $\mathcal {Y}=\R^d$, we also define 
$$
\Lambda^\omega_n(\lambda)=\frac{1}{n}\log \int_{\mathbb{R}^d}\exp(n\scal{\lambda}{x}){\rm d}\mu_n^\omega (x).
$$

\medskip

Now we start with results on the direct transfer of the LDP for $S_n\Phi(X)$ to the LDP  for the local fluctuations of almost every realization of $S_n\Phi(X)$. 


\begin{thm}\label{ASLDP} Assume {\bf (A1-3)}.
\begin{enumerate}
\item Let $x\in \mathcal{D}_I$. If $\displaystyle \liminf_{n\to\infty} \frac{\log k(n)}{n}>I(x)$ then, with  probability~1, 
$$
\displaystyle \lim_{\epsilon\to 0^+}\liminf_{n\to\infty} \frac{1}{n}\log\mu^\omega_n(B(x,\epsilon))=\lim_{\epsilon\to 0^+}\limsup_{n\to\infty} \frac{1}{n}\log\mu^\omega_n(B(x,\epsilon))=-I(x).
$$
If $\displaystyle \limsup_{n\to\infty} \frac{\log k(n)}{n}<I(x)$ then there exists $\epsilon>0$ such that, with probability 1, for $n$ large enough the set  $\big \{1\le j\le k(n): S_n \Phi (T^{(j-1)n}X(\omega))/n\in B(x,\epsilon)\big\}$  is empty.

\item Let  $x\in\mathcal Y\setminus \mathcal{D}_I$.
With probability 1, 
$$
\lim_{\epsilon\to 0^+} \limsup_{n\to\infty}\frac{1}{n}\log \mu^\omega_n(B(x,\epsilon))=-I(x)=-\infty
.$$

\medskip

\item If $(\mu_n)_{n\ge 1}$ is exponentially tight, then so is $(\mu_n^\omega)_{n\ge 1}$ almost surely.
\end{enumerate}
\end{thm}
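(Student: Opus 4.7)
The plan rests on the key identity $\E[\mu_n^\omega(B)] = \mu_n(B)$ provided by stationarity, which allows transferring LDP bounds on $\mu_n$ to almost-sure bounds on $\mu_n^\omega$ via moment estimates and Borel--Cantelli. All the \emph{upper bound} assertions (item (1) upper bound, the empty-set case, and items (2) and (3)) reduce to a single first-moment scheme: for any Borel set $B_n$ with $\mu_n(B_n)\le e^{-nr_n}$, Markov's inequality yields
\[
\P\bigl(\mu_n^\omega(B_n)\ge e^{-n(r_n-\eta)}\bigr)\le e^{-n\eta},
\]
which is summable in $n$. Applying this with $B_n=B(x,\epsilon)$, using the LDP upper bound from (A1) on the closed ball, together with the lower-semicontinuity relation $I_\epsilon(x):=\inf_{B(x,\epsilon)}I\to I(x)$ as $\epsilon\to 0$, Borel--Cantelli gives a.s.\ $\limsup_n \tfrac{1}{n}\log\mu_n^\omega(B(x,\epsilon))\le -I_\epsilon(x)$, hence $\le -I(x)$ (or $-\infty$ in item (2)) upon letting $\epsilon\to 0$ along a countable sequence. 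The empty-set assertion follows from $\P(N_n\ge 1)\le \E[N_n]=k(n)\mu_n(B)\le e^{-n\eta}$ when $\limsup\log k(n)/n<I(x)-2\eta$ and $\epsilon$ is small enough; item (3) is the same scheme with $B_n=K_\alpha^c$ and parameter $2\alpha$.

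The hard direction is the a.s.\ lower bound in item (1), which amounts to showing
\[
N_n(\omega):=\#\{1\le j\le k(n):x_{n,j}(\omega)\in B(x,\epsilon)\}\ge k(n)\,e^{-n(I(x)+2\eta)}
\]
almost surely eventually. The first step is to localize the dependence using (A3): the truncated variables $\widetilde{x}_{n,j}:=S_n\Phi_n(T^{(j-1)n}X)/n$ depend only on the coordinates $X_{(j-1)n+1},\dots,X_{(j+1)n}$, so for $|j-j'|\ge 3$ the indicators $\mathbf{1}_{\{\widetilde{x}_{n,j}\in \cdot\}}$ and $\mathbf{1}_{\{\widetilde{x}_{n,j'}\in \cdot\}}$ are measurable with respect to $\sigma$-algebras separated by at least $(|j-j'|-2)n$ indices of $X$, allowing the mixing estimates of (A2) to apply. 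Since $\delta_n\to 0$, the inclusion $\{\widetilde{x}_{n,j}\in B(x,\epsilon/2)\}\subset\{x_{n,j}\in B(x,\epsilon)\}$ holds for large $n$; combining (A3) with the LDP lower bound applied to the open ball of radius $\epsilon/2-\delta_n$ centered at $x$ gives
\[
\E[\widetilde{N}_n]\ge k(n)\,e^{-n(I(x)+\eta)}\ge e^{n(2\alpha-\eta)},
\]
where $\widetilde{N}_n$ counts hits of the $\widetilde{x}_{n,j}$ in $B(x,\epsilon/2)$ and $\alpha>0$ is chosen so that $\log k(n)/n\ge I(x)+2\alpha$ eventually.

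The main technical obstacle is the concentration of $\widetilde{N}_n$ about $\E[\widetilde{N}_n]$. A naive Chebyshev bound fails: although (A2) yields $\Var(\widetilde{N}_n)\le C\,k(n)$ (because the standard covariance inequality $|\Cov(I_j,I_{j'})|\le 4\alpha_{X,(|j-j'|-2)n}$ for $\alpha$-mixing bounded variables, together with $\sum_{m\ge 0}\alpha_{X,m}=M_1<\infty$ from (A2), gives a convergent double sum), one obtains only $\P(\widetilde{N}_n<\E[\widetilde{N}_n]/2)\lesssim k(n)/\E[\widetilde{N}_n]^2$, which is not summable in the intended regime $I(x)<\liminf\log k(n)/n<2I(x)$. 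I would instead split $\{1,\dots,k(n)\}$ into $p$ arithmetic subsequences of common difference $p$ for a fixed integer $p\ge 3$; by (A2) each subsequence can be coupled with an i.i.d.\ copy via a Bradley-type $\alpha$-coupling lemma, at total cost $O\bigl(k(n)\,\alpha_{X,(p-2)n}^{1/2}\bigr)$, which by the super-polynomial decay provided by (A2) is eventually smaller than $e^{-nA}$ for every $A>0$. A classical Chernoff bound applied to the coupled i.i.d.\ indicators then yields
\[
\P\bigl(\widetilde{N}_n<\tfrac{1}{2}\E[\widetilde{N}_n]\bigr)\le \exp\bigl(-c\,\E[\widetilde{N}_n]/p\bigr)\le \exp(-c\,e^{n\alpha}),
\]
doubly exponentially small and hence trivially summable. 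Borel--Cantelli then forces $\widetilde{N}_n\ge \E[\widetilde{N}_n]/2$ a.s.\ eventually, so $\mu_n^\omega(B(x,\epsilon))\ge \widetilde{N}_n/k(n)\ge e^{-n(I(x)+2\eta)}$ a.s.\ eventually; running $\eta$ through a countable sequence tending to $0$ gives $\liminf_n\tfrac{1}{n}\log\mu_n^\omega(B(x,\epsilon))\ge -I(x)$ and closes item (1). The principal technical load is the coupling-plus-Chernoff concentration step; everything else is routine first-moment/Borel--Cantelli plumbing on top of the LDP for $\mu_n$.
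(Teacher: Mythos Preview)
Your upper-bound arguments (the second claim in item~(1), and items~(2) and~(3)) are correct and coincide with the paper's: first moment plus Markov plus Borel--Cantelli, exactly as you describe.

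The lower-bound scheme in item~(1), however, has a genuine gap. Your coupling step requires the total coupling cost $O\bigl(k(n)\,\alpha_{X,(p-2)n}^{1/2}\bigr)$ to be summable in $n$, and you claim that the ``super-polynomial decay provided by (A2)'' makes it eventually smaller than $e^{-nA}$ for every $A>0$. This is false: assumption~(A2), namely $\int_0^1(\alpha_X^{-1}(u))^h\,{\rm d}u<\infty$ for all $h>0$, is equivalent to $\alpha_{X,m}=o(m^{-\beta})$ for every $\beta>0$, i.e.\ super-\emph{polynomial} decay in $m$, but this is far from stretched-exponential. For instance $\alpha_{X,m}=\exp(-(\log m)^2)$ satisfies (A2), yet $\alpha_{X,n}^{1/2}=\exp(-(\log n)^2/2)$ decays much more slowly than any $e^{-nA}$; since $k(n)$ typically grows like $e^{cn}$ here, the product $k(n)\alpha_{X,(p-2)n}^{1/2}$ blows up and the coupling argument collapses. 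Your Chernoff bound would work under the stronger assumption~(A2$'$), but not under~(A2) alone.

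The paper circumvents this by a low-moment approach that demands much less of the mixing. Instead of coupling to i.i.d., one applies Rio's $p$-th moment inequality (Corollary~\ref{Rio}) with $p\in(1,2)$ close to $1$ directly to the centered indicators $Y_j=\mathbf{1}_{B_n}(S_n\Phi_n(T^{(j-1)n}X)/n)-\mu_n^{(n)}(B_n)$. This yields, after splitting into even/odd indices,
\[
\P\bigl(|\mu_n^{(n),\omega}(B_n)-\mu_n^{(n)}(B_n)|\ge \theta\,\mu_n^{(n)}(B_n)\bigr)
\le C(p,\epsilon,\theta)\,k(n)^{1-p}\,\mu_n^{(n)}(B_n)^{-p+1/(1+\epsilon)},
\]
where the mixing enters only through the \emph{finite} constant $M_h^{\epsilon/(1+\epsilon)}$ with $h=(p-1)(1+\epsilon)/\epsilon$. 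Plugging in $k(n)\ge e^{n(I(x)+\eta)}$ and the LDP lower bound $\mu_n^{(n)}(B_n)\ge e^{-n(I(x)+\eta/4)}$, one can choose $\epsilon$ small (after fixing $p$) so that the right-hand side is $O(e^{-n(p-1)\eta/2})$, hence summable. The point is that a $p$-th moment with $p$ barely above $1$ already produces geometric decay $k(n)^{1-p}$, and Rio's inequality handles arbitrary $\alpha$-mixing satisfying~(A2) without any exponential rate.
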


\begin{thm} \label{corASLDP}
Assume $(\mathcal Y,\|\,\|)$ is separable, as well as {\bf (A1-3)}.
Suppose that $\sup_{x\in{\mathcal{D}}_I} I(x)<\infty$ and $\displaystyle \liminf_{n\to\infty} \frac{\log k(n)}{n}> \sup_{x\in {\mathcal{D}_I}}I(x)$, or that $\displaystyle \lim_{n\to\infty} \frac{\log k(n)}{n}=\infty$. With probability 1, $(\mu_n^\omega)_{n\ge 1}$ satisfies in $\mathcal Y$ the weak LDP with rate function $I$. If, moreover, $(\mu_n)_{n\ge 1}$ is exponentially tight, then  $(\mu_n^\omega)_{n\ge 1}$ satisfies in $\mathcal Y$ the LDP with good rate function $I$. 
\end{thm}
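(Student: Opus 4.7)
The plan is to deduce the weak LDP by reducing it to countable-index information, using separability of $(\mathcal Y,\|\cdot\|)$. Fix a countable base of open sets $(U_k)_{k\ge 1}$ of $\mathcal Y$, let $D_0 = \{z_k\}_{k\ge 1}$ be formed by choosing one point $z_k \in U_k$ in each nonempty $U_k$ (so $D_0$ is dense), and for each $k$ with $\inf_{U_k} I < \infty$ fix also $y_k \in U_k$ satisfying $I(y_k) < \inf_{U_k} I + 1/k$. Set $D' = \{y_k\}_k$ and $D = D_0 \cup D'$. Either hypothesis on $(k(n))$ gives $\liminf_n \log k(n)/n > I(x)$ for every $x \in \mathcal D_I$, so Theorem~\ref{ASLDP}(1)--(2) applies at each element of $D$; intersecting the corresponding countably many full-measure events yields an event $\Omega_0$ of probability one on which the local limits of Theorem~\ref{ASLDP} hold simultaneously at every $z \in D$.

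The open-set lower bound on $\Omega_0$ is immediate from the construction of $D'$. Given an open set $G$ with $\alpha := \inf_G I < \infty$ (nothing to prove otherwise) and $\eta > 0$, choose $x_0 \in G$ with $I(x_0) < \alpha + \eta$, then a basic open set $U_k$ with $x_0 \in U_k \subset G$ and $1/k < \eta$. The approximate minimizer $y_k \in D'$ lies in $G$ with $I(y_k) \le \inf_{U_k} I + 1/k \le I(x_0) + \eta < \alpha + 2\eta < \infty$, and for $r > 0$ small enough that $B(y_k, r) \subset U_k$, Theorem~\ref{ASLDP}(1) at $y_k$ gives
\[
\liminf_n \frac{1}{n}\log \mu_n^\omega(G) \ge \liminf_n \frac{1}{n}\log \mu_n^\omega(B(y_k, r)) \ge -I(y_k) - \eta > -\alpha - 3\eta,
\]
and letting $\eta \to 0$ yields $\liminf_n \frac{1}{n}\log \mu_n^\omega(G) \ge -\alpha$.

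The compact upper bound combines (A1) with a Markov--Borel--Cantelli transfer. By stationarity $\E[\mu_n^\omega(B)] = \mu_n(B)$ for every Borel $B$; applied to each closed ball $\overline{B(z,r)}$ with $z \in D_0$ and $r \in \mathbb{Q}_+$, Markov's inequality together with the LDP upper bound $\limsup_n \frac{1}{n}\log \mu_n(\overline{B(z,r)}) \le -\inf_{\overline{B(z,r)}} I$ produces summable tail probabilities; Borel--Cantelli and countable intersection then yield a further probability-one event $\Omega_1$ on which $\limsup_n \frac{1}{n}\log \mu_n^\omega(\overline{B(z,r)}) \le -\inf_{\overline{B(z,r)}} I$ holds for every such $(z,r)$. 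Given compact $K$, $\eta > 0$ and $\alpha := \inf_K I$, lower semi-continuity of $I$ and density of $D_0$ provide a finite cover $K \subset \bigcup_{i=1}^m \overline{B(z_i,r_i)}$ with $\inf_{\overline{B(z_i,r_i)}} I \ge \alpha - \eta$ for every $i$; the elementary estimate $\frac{1}{n}\log \sum_{i=1}^m a_i \le \max_i \frac{1}{n}\log a_i + \frac{\log m}{n}$ then gives $\limsup_n \frac{1}{n}\log \mu_n^\omega(K) \le -\alpha + \eta$, and $\eta \to 0$ concludes.

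Working on $\Omega_0 \cap \Omega_1$ we therefore obtain the weak LDP with rate function $I$. If $(\mu_n)$ is additionally exponentially tight, Theorem~\ref{ASLDP}(3) transfers exponential tightness to $(\mu_n^\omega)$ almost surely, and \cite[Lemma~1.2.18]{DZ} upgrades the weak LDP to the full LDP with good rate function $I$. The main subtlety is the open-set lower bound: since $I$ is only lower semi-continuous, a generic countable dense set $D_0$ need not contain near-minimizers of $I$ in every open set, so the bound $\inf_{G \cap D_0} I \le \inf_G I$ can fail; enriching $D_0$ with the basis-indexed family $(y_k)$ of $1/k$-approximate minimizers is precisely what circumvents this obstacle.
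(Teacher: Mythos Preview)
Your proof is correct but takes a genuinely different route from the paper's. The paper establishes the pointwise ``local form''
\[
\lim_{r\to 0^+}\liminf_{n\to\infty}\frac{1}{n}\log\mu_n^\omega(B(y,r))
=\lim_{r\to 0^+}\limsup_{n\to\infty}\frac{1}{n}\log\mu_n^\omega(B(y,r))=-I(y)
\]
for every $y\in\mathcal Y$ simultaneously, by extracting from the proof of Theorem~\ref{ASLDP}(1) a sandwich comparison between $\mu_n^\omega(B(x,r))$ and $\mu_n(B(x,r/2))$, $\mu_n(B(x,3r/2))$ over a countable dense set of centers and rational radii, extending to all of $\mathcal D_I$ by ball inclusions, and handling $\mathcal Y\setminus\mathcal D_I$ via the Markov estimate of Theorem~\ref{ASLDP}(2); it then invokes \cite[Th.~4.1.11]{DZ} to pass from the local form to the weak LDP. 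You instead prove the two inequalities of the weak LDP directly: the open lower bound via a countable family of basis-indexed near-minimizers $(y_k)$ of $I$ (a device that sidesteps any appeal to \cite{DZ} and neatly addresses the lower-semicontinuity obstruction you point out), and the compact upper bound via the same Markov--Borel--Cantelli transfer applied to countably many closed rational balls plus a finite subcover argument. Your approach is more self-contained; the paper's buys the slightly stronger intermediate statement that the small-ball log-asymptotics of $\mu_n^\omega$ coincide with those of $\mu_n$ everywhere, which may be of independent use.
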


Next we give results concerning the transfer of convergence properties for $\Lambda_n$ to convergence properties for $\Lambda_n^\omega$. It is worth mentioning that under the assumptions of Theorem~\ref{corASLDP}, if one has additional information like $\|S_n\Phi\|_\infty =O(n)$, then Varadhan's integral lemma (see \cite{DZ} Th. 4.3.1) together with Theorem~\ref{corASLDP} directly provides the almost sure pointwise convergence of $\Lambda_n^\omega$ to $\Lambda$ as $n\to\infty$.

\begin{thm}
\label{th3}
Assume {\bf (A1')} and {\bf (A2-3)}. Let $\mathbf{\lambda}_0\in \widering{\mathcal{D}}$ at which $\Lambda$ is differentiable and denote $\grad \Lambda(\lambda_0)$ as $x_0$.

\begin{enumerate}
\item If $\displaystyle \liminf_{n\to\infty} \frac{\log k(n)}{n}>\Lambda^*(x_0)$ then there exists $r>0$ such that $B(\lambda_0,r)\subset\mathcal D$ and, with probability 1,  $\Lambda^\omega_n$ converges uniformly  to $\Lambda$ over $B(\lambda_0,r)$. 

\medskip

\item If $\displaystyle \liminf_{n\to\infty} \frac{\log k(n)}{n}>\Lambda^*(x_0)$  then, with probability 1,  
\begin{equation}\label{LD1}
\lim_{\varepsilon\to 0}\lim_{n\to\infty}\frac{1}{n}\log\mu_n^\omega\big (B(x_0,\epsilon)\big )=-\Lambda^*(x_0).
\end{equation}
If $\displaystyle \limsup_{n\to\infty} \frac{\log k(n)}{n}<\Lambda^*(x_0)$, there exists $\epsilon>0$ such that, with probability 1, for $n$ large enough the set  $\big \{1\le j\le k(n): S_n \Phi (T^{(j-1)n}X(\omega))/n\in B(x_0,\epsilon)\big\}$  is empty. 

\item If $\displaystyle \lim_{n\to\infty} \frac{\log k(n)}{n}=\Lambda^*(x_0)$ and $t\ge 0\mapsto \Lambda (t\lambda_0)$ is strictly convex at 1  then, with probability 1, for all $t\ge 1$ we have 
\begin{equation}\label{laplacebis}
\lim_{n\to\infty}  \Lambda_n^\omega(t\lambda_0)=\Lambda(\lambda_0)+(t-1)\scal{ \lambda_0}{ x_0}. 
\end{equation}

\end{enumerate}
\end{thm}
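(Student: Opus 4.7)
The plan is to exploit that $\mu_n^\omega$ is an empirical measure over $k(n)$ blocks whose marginal laws are each equal to $\mu_n$, for which Theorem~\ref{LDPloc} provides, directly under (A1') at the differentiability point $\lambda_0$, a local LDP at $x_0=\grad\Lambda(\lambda_0)$. The three parts then follow by combining (i) this local G\"artner--Ellis statement, (ii) first and second moment estimates on the block counts, and (iii) the surrogate $S_n\Phi_n$ from (A3) supported on $2n$ coordinates, so that blocks separated by at least $n$ indices become nearly independent via the Rosenblatt bound controlled by (A2).

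For Part~(2) I would argue as in the proof of Theorem~\ref{ASLDP}, replacing the full LDP for $\mu_n$ by its local version. By stationarity, $\E[k(n)\mu_n^\omega(B(x_0,\epsilon))]=k(n)\mu_n(B(x_0,\epsilon))$, which by local G\"artner--Ellis behaves like $\exp(n(\log k(n)/n-\Lambda^*(x_0)+o_\epsilon(1)))$. In the regime $\liminf\log k(n)/n>\Lambda^*(x_0)$ this expectation blows up exponentially, and a second-moment estimate on the block count---whose off-diagonal covariances are dominated via (A2) after substituting the $2n$-measurable surrogate (A3)---combined with Borel--Cantelli yields the matching a.s.\ lower bound on $\mu_n^\omega(B(x_0,\epsilon))$; the upper bound comes from the local G\"artner--Ellis upper bound on $\mu_n$ and a first-moment Markov argument. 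In the opposite regime $\limsup\log k(n)/n<\Lambda^*(x_0)$, the expected number of blocks hitting an appropriate $B(x_0,\epsilon)$ is summably small, so Markov plus Borel--Cantelli forces this count to be zero for $n$ large.

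For Part~(1), the map $\lambda\mapsto\Lambda_n^\omega(\lambda)$ is convex and $\Lambda$ is continuous on $\widering{\mathcal D}$, so a classical convex-analytic result reduces the claim to a.s.\ pointwise convergence on a dense subset of some ball $B(\lambda_0,r)\subset\mathcal D$. The upper bound $\limsup_n\Lambda_n^\omega(\lambda)\le\Lambda(\lambda)$ at each $\lambda\in\mathcal D$ is immediate from the identity $\E[\exp(n\Lambda_n^\omega(\lambda))]=\exp(n\Lambda_n(\lambda))$ via Markov and Borel--Cantelli. For the lower bound, I would choose $r$ small enough that every differentiability point $\lambda\in B(\lambda_0,r)$ satisfies $\Lambda^*(\grad\Lambda(\lambda))<\liminf_n\log k(n)/n$ (possible by continuity of the map $\lambda\mapsto\scal{\lambda}{\grad\Lambda(\lambda)}-\Lambda(\lambda)=\Lambda^*(\grad\Lambda(\lambda))$ at $\lambda_0$ along differentiability points), and apply Part~(2) with $x_0$ replaced by $\grad\Lambda(\lambda)$ to find blocks of $\mu_n^\omega$ in $B(\grad\Lambda(\lambda),\epsilon)$, which yields $\liminf_n\Lambda_n^\omega(\lambda)\ge\Lambda(\lambda)-|\lambda|\epsilon$. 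Sending $\epsilon\to 0$ gives pointwise convergence on the dense set of differentiability points of $\Lambda$, and convexity upgrades this to uniform convergence on $B(\lambda_0,r/2)$.

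For Part~(3), in the critical regime $\log k(n)/n\to\Lambda^*(x_0)$, I would dissect $\exp(n\Lambda_n^\omega(t\lambda_0))/k(n)$ into shells of $\R^d$ indexed by the value of $\Lambda^*$: shells at level $c>\Lambda^*(x_0)$ are eventually empty by a first-moment estimate, while those at level $c\le\Lambda^*(x_0)$ contribute at most $\exp(n\sup_y(t\scal{\lambda_0}{y}-\Lambda^*(y)))$ after normalization. The global supremum of $t\scal{\lambda_0}{y}-\Lambda^*(y)$ on $\{\Lambda^*\le\Lambda^*(x_0)\}$ is attained uniquely at $y=x_0$ for $t\ge 1$ by a Lagrange computation using $\grad\Lambda^*(x_0)=\lambda_0$, with the strict convexity hypothesis guaranteeing both the strictness of the maximum and that the constraint binds strictly, yielding the target value $t\scal{\lambda_0}{x_0}-\Lambda^*(x_0)=\Lambda(\lambda_0)+(t-1)\scal{\lambda_0}{x_0}$. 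The matching lower bound comes from blocks in $B(x_0,\epsilon)$, for which a first- and second-moment argument (using that $k(n)\mu_n(B(x_0,\epsilon))$ is subexponentially divergent for any fixed $\epsilon>0$) ensures at least one such block a.s.\ eventually; the statement is then extended from a countable dense set of $t$ to all $t\ge 1$ by convexity of $t\mapsto\Lambda_n^\omega(t\lambda_0)$. The main technical obstacle throughout is implementing the concentration step under only mixing rather than independence, handled by (A3), a decomposition of the $k(n)$ blocks into well-separated sub-families, and covariance bounds via (A2); the critical case of Part~(3) is additionally delicate because the expected counts near $x_0$ are only subexponentially large.
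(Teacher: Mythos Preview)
Your route through Parts (1) and (2) is sound but inverts the paper's logic. The paper proves (1) directly: it centers the exponential summands, setting
\[
Z_j^{(n)}(\lambda)=\exp\big(\langle\lambda,S_n\Phi_n(T^{(j-1)n}X)\rangle-n\Lambda_n^{(n)}(\lambda)\big)-1,
\]
bounds $\E\big|\sum_j Z_j^{(n)}(\lambda)\big|^p$ for $p\in(1,2)$ via Rio's moment inequality (Corollary~\ref{Rio}) under (A2), and uses a first-order Taylor expansion of $\Lambda$ at $\lambda_0$ to see that the governing exponent is $(p-1)\Lambda^*(x_0)$; Part (2) then falls out of (1) combined with Theorem~\ref{LDPloc}. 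You instead establish (2) first by a block-counting concentration argument (the engine behind Theorem~\ref{ASLDP}) and recover (1) from a Markov upper bound plus a lower bound obtained by locating blocks near $\nabla\Lambda(\lambda)$. Both strategies rely on (A2)--(A3) in the same way. Note, however, that a straight second-moment (Chebyshev) bound under (A2) requires exploiting $|\mathrm{Cov}(\mathbf 1_j,\mathbf 1_{j'})|\le\min(p_n,\alpha_{(|j-j'|-2)n})$ and choosing the crossover carefully; a crude variance bound of order $k(n)$ only yields the threshold $2\Lambda^*(x_0)$ instead of $\Lambda^*(x_0)$. The paper's $L^p$ route with $p\downarrow 1$ sidesteps this bookkeeping and feeds directly into the quantitative Theorem~\ref{thspeed}.

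For Part (3) your shell decomposition and Lagrange computation can be pushed through, but it is far heavier than necessary, and one detail is off: in the critical regime $k(n)\mu_n(B(x_0,\epsilon))$ is \emph{exponentially} divergent for each fixed $\epsilon>0$ (since $\inf_{B(x_0,\epsilon)}\Lambda^*<\Lambda^*(x_0)$ by convexity and $\Lambda^*(x_0)>0$), which only makes your lower bound easier. The paper avoids shells entirely. Writing $P(t)=\Lambda(t\lambda_0)$ and $P_n^\omega(t)=\Lambda_n^\omega(t\lambda_0)$, strict convexity at $1$ gives $P^*(P'(t))<P^*(P'(1))=\Lambda^*(x_0)$ for $t\in(0,1)$, so Part (1) already yields $P_n^\omega(t)\to P(t)$ there. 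For $t>1$ the upper bound comes from the elementary inequality $\sum a_j^s\le(\sum a_j)^s$ ($s>1$), which gives
\[
P_n^\omega(s(1-\eta))\le sP_n^\omega(1-\eta)+(s-1)\tfrac{\log k(n)}{n},
\]
and the lower bound from convexity,
\[
P_n^\omega(t)\ge P_n^\omega(1-\eta)+(t-1+\eta)(P_n^\omega)'(1-\eta),
\]
both followed by $\eta\to 0$. No uniform almost-sure control over $\Lambda^*$-level sets is needed.
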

\begin{rem}{\rm 
(1) The almost sure large deviations equality \eqref{LD1} provided by Theorem~\ref{th3}(2) is a direct consequence of Theorem~\ref{th3}(1) and Theorem~\ref{LDPloc}.  

(2) In Theorem~\ref{th1}(3), since we consider a sequence of i.i.d.  real valued random variables, if $t\ge 0\mapsto \Lambda (t\lambda_0)$ is not strictly convex at 1 this means that $X$ is constant, and the result obviously still holds.
}\end{rem}

The following result is a direct consequence of Theorem~\ref{th3}(1) and~\ref{G-E}. 

\begin{cor}\label{corth1}
Assume {\bf (A1'')} and {\bf (A2-3)}. 

Suppose that $\sup_{x\in \mathcal{D}_\Lambda}\Lambda^*(x)<\infty$ and $\displaystyle \liminf_{n\to\infty} \frac{\log k(n)}{n}>\sup_{x\in \mathcal{D}_\Lambda}\Lambda^*(x)$, or $\displaystyle \lim_{n\to\infty} \frac{\log k(n)}{n}=\infty$. With probability 1, $\Lambda^\omega_n$ converges uniformly  to $\Lambda$ on the compact subsets of $\mathcal{D}_\Lambda$, hence the assertion of parts (1) and (2) of Theorem~\ref{G-E} hold for $(\mu_n^\omega)_{n\ge 1}$. If, moreover, $\Lambda$ is essentially smooth and lower semi-continuous,  the assertion of part (3) of Theorem~\ref{G-E} holds for $(\mu_n^\omega)_{n\ge 1}$, i.e. $(\mu_n^\omega)_{n\ge 1}$ satisfies in $\R^d$ the LDP with good rate function $I=\Lambda^*$. 
\end{cor}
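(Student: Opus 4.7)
The plan is to deduce Corollary~\ref{corth1} from Theorem~\ref{th3}(1) by a convexity-based globalization argument, after which parts (1)--(3) of the G\"artner-Ellis theorem (Theorem~\ref{G-E}) apply to $(\mu_n^\omega)$.

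\textbf{Step 1 (setup).} Each $\Lambda_n$ is convex as a logarithmic moment generating function, hence so is the pointwise limit $\Lambda$ on $\mathcal{D}_\Lambda$. In particular, $\Lambda$ is continuous on $\widering{\mathcal{D}}_\Lambda$ and, by a classical result on convex functions, differentiable Lebesgue-almost everywhere in $\widering{\mathcal{D}}_\Lambda$. Fix a countable dense subset $\{\lambda^{(p)}\}_{p\ge 1}$ of $\widering{\mathcal{D}}_\Lambda$ consisting of points at which $\Lambda$ is differentiable, and set $x^{(p)}=\grad\Lambda(\lambda^{(p)})$.

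\textbf{Step 2 (pointwise application of Theorem~\ref{th3}(1)).} At a point of differentiability, $\Lambda^*(x^{(p)})=\scal{\lambda^{(p)}}{x^{(p)}}-\Lambda(\lambda^{(p)})<\infty$. Under either hypothesis of the corollary, $\liminf_{n\to\infty}\log k(n)/n>\Lambda^*(x^{(p)})$ for every $p$: either $\Lambda^*(x^{(p)})\le \sup_{x\in\mathcal{D}_\Lambda}\Lambda^*(x)<\liminf_n \log k(n)/n$, or $\log k(n)/n\to\infty$ trumps any finite value. Theorem~\ref{th3}(1) then furnishes, for each $p$, a radius $r_p>0$ with $B(\lambda^{(p)},r_p)\subset\mathcal{D}$ and a full-probability event $\Omega_p$ on which $\Lambda_n^\omega\to\Lambda$ uniformly over $B(\lambda^{(p)},r_p)$.

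\textbf{Step 3 (globalization via convexity).} Set $\Omega_0=\bigcap_{p\ge 1}\Omega_p$, which still has full probability. On $\Omega_0$ the sequence $(\Lambda_n^\omega)_{n\ge 1}$ converges pointwise to $\Lambda$ on the dense set $\{\lambda^{(p)}\}$. Each $\Lambda_n^\omega$ is convex (again, as a logarithmic moment generating function of the probability measure $\mu_n^\omega$), and $\Lambda$ is finite and continuous on the open convex set $\widering{\mathcal{D}}_\Lambda$. A standard convex analysis lemma then upgrades pointwise convergence on a dense subset to uniform convergence on compact subsets of $\widering{\mathcal{D}}_\Lambda$. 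This gives the first assertion of the corollary. Moreover, convexity of $\Lambda_n^\omega$ along any ray emanating from an interior point of $\mathcal{D}_\Lambda$ forces $\Lambda_n^\omega(\lambda)\to+\infty=\Lambda(\lambda)$ for $\lambda$ outside $\overline{\mathcal{D}}_\Lambda$, so $\Lambda^\omega:=\lim_n\Lambda_n^\omega$ exists as an extended real number and coincides with $\Lambda$ on $\R^d$ (up to possibly the boundary of $\mathcal{D}_\Lambda$, which is irrelevant to the conclusions of Theorem~\ref{G-E}).

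\textbf{Step 4 (conclusion).} On $\Omega_0$, the sequence $(\mu_n^\omega)$ satisfies assumption \textbf{(A)} of Theorem~\ref{G-E} with the same $\Lambda$, and $0\in\widering{\mathcal{D}}_\Lambda$. Parts (1) and (2) of Theorem~\ref{G-E} immediately hold for $(\mu_n^\omega)$. If in addition $\Lambda$ is essentially smooth and lower semi-continuous, part (3) of Theorem~\ref{G-E} yields the full LDP in $\R^d$ for $(\mu_n^\omega)$ with the good rate function $\Lambda^*$.

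\textbf{Main obstacle.} The delicate point is the passage from pointwise convergence at countably many differentiability points, each carrying its own exceptional null set in Theorem~\ref{th3}(1), to uniform convergence on \emph{all} compact subsets of $\widering{\mathcal{D}}_\Lambda$. It is precisely the convexity of the $\Lambda_n^\omega$ that makes this globalization work with only countably many applications of Theorem~\ref{th3}(1); without convexity, one would face the impossibility of invoking Theorem~\ref{th3}(1) at uncountably many points while controlling the union of the exceptional events.
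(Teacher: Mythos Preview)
Your proof is precisely what the paper intends: the paper declares Corollary~\ref{corth1} a direct consequence of Theorem~\ref{th3}(1) and Theorem~\ref{G-E} without further detail, and you have correctly filled in the globalization step via convexity (the same Rockafellar Theorem~10.8 argument used at the end of the paper's own proof of Theorem~\ref{th3}(1)).

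One minor caveat on Step~3: the assertion that convexity of the $\Lambda_n^\omega$ along rays \emph{forces} $\Lambda_n^\omega(\lambda)\to+\infty$ for $\lambda\notin\overline{\mathcal{D}}_\Lambda$ is not valid in general---if $\Lambda$ happens to be bounded on $\widering{\mathcal{D}}_\Lambda$ and jumps to $+\infty$ outside, convexity of the approximants gives no such conclusion. This does not damage the corollary, however, because $\Lambda^*(x)=\sup_{\lambda\in\mathcal{D}_\Lambda}\{\scal{\lambda}{x}-\Lambda(\lambda)\}$ and the proof of Theorem~\ref{G-E} (upper bound via exponential Chebyshev and tightness from $0\in\widering{\mathcal{D}}_\Lambda$; lower bound via exposed points with exposing hyperplane in $\widering{\mathcal{D}}_\Lambda$) only ever uses $\lambda$ in $\mathcal{D}_\Lambda$, where you have already established $\Lambda_n^\omega\to\Lambda$.
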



Next we want to measure more finely how big must be $k(n)$ for $\Lambda_n^\omega(\lambda)$ to converge to $\Lambda(\lambda)$ when $\Lambda$ is smooth. 

If $\mathcal Y=\R^d$, for any $n\ge 1$ and any subset $B$ of $\mathcal{D}_\Lambda$ let 
\begin{eqnarray*}
\delta_{n}\Lambda(B)&=&\sup\{|\Lambda(\lambda)-\Lambda_n(\lambda)|:\lambda\in B\},\\ 
\delta_{n}\Phi(B)&=&(\sup_{\lambda\in B}\|\lambda\|) \|S_n\Phi-S_n\Phi_n\|_\infty/n,\\ 
\delta_n(\Lambda,\Phi)(B)&=&\delta_{n}\Lambda(B)+\delta_{n}\Phi(B),
\end{eqnarray*}
and if $\Lambda$ is twice continuously differentiable, let 
\begin{eqnarray*}
\varLambda^*(B)&=&\sup\{\Lambda^*(\grad\Lambda(\lambda)):\lambda\in B\},\\
\xi_1(B)&=&\sup\{\|\grad \Lambda(\lambda)\|: \lambda\in B\},\\
 \xi_2(B)&=&\sup\left \{\frac{1}{2}~^{t}\lambda {\rm D}^2\Lambda(\lambda)\lambda: \lambda\in B\right \}\\
 \xi(B)&=&\varLambda^*(B)+\xi_2(B),
\end{eqnarray*}
where ${\rm D}^2\Lambda(\lambda)$ stands for the Hessian matrix of $\Lambda$ at $\lambda$.

If $B\subset\R^d$ and $\rho\in\R_+^*$ we define $B_\rho$ as $\{\lambda\in\R^d: d(\lambda,B)\le \rho\}$, where ${\rm d}$ stands for the Euclidean distance. 

\begin{thm}\label{thspeed} Suppose that {\bf (A1')}, {\bf (A2')} and {\bf (A3)} hold, and $\Lambda$ is twice continuously differentiable over $\mathcal D$. Let $B$ be a compact subset of $\mathcal{D}$ and let $\rho>0$ such that $B_\rho\subset \mathcal D$. 

Suppose that there exists a positive sequence $(\epsilon_n)_{n\ge 1}$ converging to 0 such that
\begin{equation}\label{BC}
\sum_{n\ge 1} \exp \big (-\sqrt{\epsilon_n} [\log(k(n))-n\varLambda^*(B)]\big ) \epsilon_n^{-(d+3/2)}\exp \big ( 3 n[\xi(B_\rho)\epsilon_n+ \delta_n(\Lambda,\Phi)(B_\rho)]\big)<\infty.
\end{equation}
Let $\eta>0$. With probability 1, for $n$ large enough, 
\begin{equation}\label{unifconv}
\max_{\lambda\in B} |\Lambda^\omega_n(\lambda)-\Lambda(\lambda)|\le \mathcal E(n,\eta),
\end{equation}
where $\mathcal E(n,\eta)=(\eta+2\xi_1(B))\epsilon_n + \delta_n(\Lambda,\Phi)(B_\rho)$, or $\mathcal E(n,\eta)=\mathcal E(n)=\epsilon_n(1+\epsilon_n)/n+\delta_n(\Lambda,\Phi)(B_\rho)$ if $B$ consists of only one point.
\end{thm}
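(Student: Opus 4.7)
The plan is to establish a pointwise Chernoff-type deviation bound for $\Lambda^\omega_n(\lambda)$ at each fixed $\lambda\in B$, uniformize across $B$ via an $\epsilon_n$-net combined with the smoothness of $\Lambda$, and conclude with Borel--Cantelli using the summability hypothesis~\eqref{BC}. First, by (A3), replacing $S_n\Phi$ with its $2n$-block-measurable proxy $S_n\Phi_n$ everywhere costs at most $\delta_n\Phi(B_\rho)$ in both $\Lambda^\omega_n(\lambda)$ and in $\tilde\Lambda_n(\lambda):=n^{-1}\log \E[e^{\scal{\lambda}{S_n\Phi_n(X)}}]$, while (A1') gives $|\Lambda_n-\Lambda|\le \delta_n\Lambda(B_\rho)$; these two approximations produce the $\delta_n(\Lambda,\Phi)(B_\rho)$ contributions both in $\mathcal E(n,\eta)$ and in the exponential factor of~\eqref{BC}. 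After this reduction it suffices to control the deviation of $\bar Y_n(\lambda):=k(n)^{-1}\sum_{j=1}^{k(n)} Y_j(\lambda)$, with $Y_j(\lambda):=\exp(\scal{\lambda}{S_n\Phi_n(T^{(j-1)n}X)})$, from its expectation $e^{n\tilde\Lambda_n(\lambda)}$. The structural benefit is that each $Y_j(\lambda)$ depends only on $X_{(j-1)n+1},\dots,X_{(j+1)n}$, so for $|i-j|\ge 2$ the $\sigma$-algebras $\sigma(Y_i)$ and $\sigma(Y_j)$ are separated by at least $(|i-j|-2)n$ coordinates, and by (A2') their $\alpha$-mixing coefficient decays like $O(\exp(-\gamma((|i-j|-2)n)^\theta))$, fast enough to render long-range covariance corrections negligible.

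For the pointwise deviation at fixed $\lambda\in B$, the upper tail $\P(\bar Y_n(\lambda)\ge e^{n\delta}\E\bar Y_n(\lambda))$ is handled by plain Markov applied to the nonnegative variable $\bar Y_n(\lambda)$, yielding a negligible $e^{-n\delta}$. The lower tail $\P(\bar Y_n(\lambda)\le e^{-n\delta}\E\bar Y_n(\lambda))$ is the delicate step: since $\E[Y_j(\lambda)^2]=e^{n\tilde\Lambda_n(2\lambda)}$ may be infinite when $2\lambda\notin\mathcal D$, a pure second-moment method is ruled out, and the plan is to use instead a fractional $(1+h)$-th moment with $h=\sqrt{\epsilon_n}$. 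Because $B_\rho\subset\mathcal D$ is open and $h\to 0$, for $n$ large $(1+h)\lambda\in\mathcal D$ uniformly for $\lambda\in B$, and Taylor-expanding $\tilde\Lambda_n$ about $\lambda$ (using $\Lambda\in C^2(\mathcal D)$ together with (A1')) gives
\[
\tilde\Lambda_n((1+h)\lambda)-(1+h)\tilde\Lambda_n(\lambda)\;\le\; h^2\xi_2(B_\rho)+o(h^2),
\]
which controls the normalized $(1+h)$-th moment of $Y_j(\lambda)/\E[Y_j(\lambda)]$. Combined with a Rio-type covariance inequality for $\alpha$-mixing sequences (absorbing the long-range terms via (A2')) and a Paley--Zygmund-type lower-tail argument, and then optimizing in $h$, this produces the pointwise bound
\[
\P\bigl(|\Lambda^\omega_n(\lambda)-\tilde\Lambda_n(\lambda)|\ge \delta\bigr)\;\le\; C\,\epsilon_n^{-3/2}\exp\!\bigl(-\sqrt{\epsilon_n}[\log k(n)-n\varLambda^*(B)]+3n\xi(B_\rho)\epsilon_n\bigr)
\]
for $\delta$ of order $\epsilon_n$; the $\epsilon_n^{-3/2}$ prefactor encodes the second-order Taylor normalization and the $\sqrt{\epsilon_n}$ exponent is the optimizer in $h$.

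Finally, cover $B$ by an $\epsilon_n$-net $\mathcal N_n$ of cardinality $O(\epsilon_n^{-d})$. Since $\Lambda\in C^2(\mathcal D)$, for $\|\lambda-\lambda'\|\le \epsilon_n$ one has $|\Lambda(\lambda)-\Lambda(\lambda')|\le \xi_1(B)\epsilon_n$, and the analogous control $|\Lambda^\omega_n(\lambda)-\Lambda^\omega_n(\lambda')|\le (\xi_1(B)+\eta)\epsilon_n$ on a high-probability event follows from bounding $Y_j(\lambda')/Y_j(\lambda)=\exp(\scal{\lambda'-\lambda}{S_n\Phi_n(T^{(j-1)n}X)})$ using the event produced by the pointwise estimate (the slack $\eta\epsilon_n$ absorbs the fluctuation of the empirical $\grad$-type average around its expectation). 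Union-bounding the pointwise estimate over $\mathcal N_n$ produces exactly the summand of~\eqref{BC} ($\epsilon_n^{-d}$ from the net cardinality, $\epsilon_n^{-3/2}$ from the pointwise bound, and the exponential factor as above), and the first Borel--Cantelli lemma yields~\eqref{unifconv} almost surely for $n$ large with $\mathcal E(n,\eta)=(\eta+2\xi_1(B))\epsilon_n+\delta_n(\Lambda,\Phi)(B_\rho)$. When $B=\{\lambda_0\}$ no net is needed and the $\xi_1(B)\epsilon_n$ slack disappears: a direct second-order Taylor expansion of $\tilde\Lambda_n$ about $\lambda_0$ produces a residual of order $\epsilon_n^2/n$, giving the sharper $\mathcal E(n)=\epsilon_n(1+\epsilon_n)/n+\delta_n(\Lambda,\Phi)(B_\rho)$. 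The main obstacle, as expected, is the pointwise lower-tail estimate: because $Y_j(\lambda)$ is unbounded and carries only partial exponential moments, the argument must be executed at the fractional exponent $h\propto\sqrt{\epsilon_n}$, and the coupling between this exponent and the precise $\alpha$-mixing moment inequality is what encodes the specific form of the summand in~\eqref{BC}.
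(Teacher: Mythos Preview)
Your skeleton---fractional moment, Rio-type $\alpha$-mixing inequality, $\epsilon_n$-net, Borel--Cantelli---matches the paper's, but two of the steps as written would not go through.

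\textbf{The pointwise deviation.} Treating the upper and lower tails of $\bar Y_n(\lambda)$ separately and invoking a ``Paley--Zygmund-type lower-tail argument'' does not yield the bound you claim: Paley--Zygmund gives a \emph{lower} bound on $\P(\bar Y_n\ge\theta\,\E\bar Y_n)$, not a summable \emph{upper} bound on $\P(\bar Y_n\le\theta\,\E\bar Y_n)$. The mechanism that actually produces the factor $\exp(-\sqrt{\epsilon_n}\log k(n))$ in~\eqref{BC} is \emph{centering}. The paper sets $Z^{(n)}_j(\lambda)=Y_j(\lambda)/\E[Y_j(\lambda)]-1$, applies Rio's moment inequality (Corollary~\ref{Rio}) to this centered, stationary, $\alpha$-mixing sequence with exponents $p_n=1+\sqrt{\epsilon_n}$ and $\epsilon=\epsilon_n$, and then Markov. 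Rio gives $\E\big|\sum_j Z^{(n)}_j\big|^{p_n}\lesssim k(n)\,(\E|Z^{(n)}_1|^{(1+\epsilon_n)p_n})^{1/(1+\epsilon_n)}$, so dividing by $k(n)^{p_n}$ yields $k(n)^{1-p_n}=\exp(-\sqrt{\epsilon_n}\log k(n))$; without centering, the $p_n$-th moment of $\sum_j Y_j$ scales like $k(n)^{p_n}$ and the gain disappears. Relatedly, your Taylor expansion $\tilde\Lambda_n((1+h)\lambda)-(1+h)\tilde\Lambda_n(\lambda)\le h^2\xi_2(B_\rho)+o(h^2)$ drops the first-order term $h\,\Lambda^*(\grad\Lambda(\lambda))$, which is exactly what supplies the $n\varLambda^*(B)$ in~\eqref{BC}; the paper expands $\Lambda(p(1+\epsilon)\lambda)/(1+\epsilon)-p\Lambda(\lambda)$, whose leading part is $(p-1)\Lambda^*(\grad\Lambda(\lambda))$ plus a remainder bounded by $\xi_2(B_\rho)(p-1)^2+2\xi(B_\rho)\epsilon$. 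The mixing factor $M_{h_n}^{\epsilon_n/(1+\epsilon_n)}$ with $h_n=(1+\epsilon_n)/\sqrt{\epsilon_n}$ is then shown to be $O(1)$ via {\bf (A2')} and Stirling.

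\textbf{Passing from the net to all of $B$.} Your plan to bound $|\Lambda^\omega_n(\lambda)-\Lambda^\omega_n(\lambda')|$ via $Y_j(\lambda')/Y_j(\lambda)=\exp(\scal{\lambda'-\lambda}{S_n\Phi_n(T^{(j-1)n}X)})$ would require pathwise control of $S_n\Phi_n(T^{(j-1)n}X)/n$ uniformly in $j$, which is unavailable (individual blocks may be arbitrarily large). The paper instead exploits \emph{convexity}: once $\Lambda^\omega_n$ converges to $\Lambda$ on the increasing dense dyadic grid $\bigcup_n\mathcal G_n(B_{\rho/2})$, Rockafellar's theorems (Th.~10.8 and~25.7 in \cite{Roc}) give uniform convergence of $\Lambda^\omega_n$ on $B_{\rho/2}$ and, crucially, of $\grad\Lambda^\omega_n$ to $\grad\Lambda$ on $B$. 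The $\eta$ in $\mathcal E(n,\eta)$ is precisely the slack $\sup_B\|\grad\Lambda^\omega_n-\grad\Lambda\|\le\eta/2$ available for large $n$, combined with the grid estimate~\eqref{dyadique} and a one-step Lipschitz bound; it is not an empirical-fluctuation term as you describe.
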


\begin{rem}\label{qual}
{\rm (1) It follows easily from the proof of Theorem~\ref{thspeed} (see \eqref{dyadique}) that if $B=\{\lambda\}$, then in \eqref{BC} one can replace $(d+3/2)$ by $3/2$ to get the same conclusions as in Theorem~\ref{thspeed}. 

\smallskip

\noindent (2) If the $X_i$ are i.i.d, a simple modification of the proof using Lemma~\ref{Rio}(2) rather than Lemma~\ref{Rio}(1) makes it possible to replace $(d+3/2)$ by $(d+1)$ in \eqref{BC}. 

\smallskip

\noindent (3) In the context described in Section~\ref{Birkhoff}, where $X$ takes values in a symbolic space,  $S_n\Phi(X)$ represents the Birkhoff sum of a continuous $\R^d$-valued potential $\Phi$ and  the law of $X$ is a Gibbs measure, we will give conditions under which both  $\delta_{n}\Lambda(B_\rho)$ and $\delta_{n}\Phi(B_\rho)$ are $O(1/n)$.  Then, a choice like $\epsilon_n=\gamma \log(n)/n$ and $\log(k(n))/n-\varLambda^*(B)\ge \sqrt{\gamma' \log(n)/n}$ with $\sqrt{\gamma\gamma'}> d+5/2+3\gamma \xi(B_\rho)$ ensures that \eqref{BC} holds and $\mathcal E(n,\eta)= O(\epsilon_n)=O(\log(n)/n)$.

%
}

\end{rem}

\begin{rem}{\rm 
As  a first explicit example of situation to which Theorems~\ref{th3} and~\ref{thspeed} can be applied, let us consider products of random invertible matrices applied to a normalized vector. Let $\mu$ be a probability measure on  $GL_m(\R)$.  Suppose that the support of $\mu$ generates a strongly irreducible and contracting semi-group (see Ch. III in \cite{BL} for the definition). Suppose also that $\exp(\tau \max (\log^+\|x\|,\log^+\|x^{-1}\|))$ is $\mu$-integrable for some $\tau>0$. Let $X=(X_i)_{i\ge 1}$ be a sequence of independent random matrices distributed according to $\mu$. Fix a unit vector $x$ and set $S_n\Phi(X)= \log \| X_n\cdots X_1\cdot x\|$. There exists (see Ch V.6 in \cite{BL}) a neiborhood $\mathcal D$ of $0$, independent of $x$, such that the limit $\Lambda$ of $\Lambda_n$ exists and is analytic on $\mathcal D$ (the derivative of $\Lambda$ at 0 is the upper Lyapounov exponent associated with $\mu$). 
}
\end{rem}
In the case where the $X_i$ take values in $\R^d$ and are i.i.d, we also have  the following improvement of Theorem~\ref{thspeed}.

\begin{thm}
\label{th2}
Suppose that the $X_i$ are i.i.d and take values in $\R^d$. Suppose also that $S_n\Phi(X)=\sum_{k=1}^nX_i$, and  $\Lambda(\lambda)=\log \E \exp( \scal{\lambda}{X_1})$ is finite over a convex open subset of $\R^d$. 

Let $B$ be a compact subset of $\mathcal{D}$ and let $\rho>0$ such that $B_\rho\subset \mathcal D$. Suppose that there exists a positive sequence $(\epsilon_n)_{n\ge 1}$ converging to 0  such that 
 $$
\sum_{n\ge 1} \exp \big (-\sqrt{\epsilon_n} [\log (k(n))-n\varLambda^*(B)]\big )\epsilon_n^{-(d+1)}\exp \big (\xi_2 (B_\rho)\epsilon_n n\big)<\infty.
$$
The same properties as in Theorem~\ref{thspeed} hold, with $\mathcal E(n,\eta)=(\eta+2\xi_1(B))\epsilon_n$, or  $\mathcal E(n,\eta)=\mathcal E(n)=\epsilon_n(1+\epsilon_n)/n$ if $B$ consists of only one point.
\end{thm}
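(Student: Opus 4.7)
The strategy is to mirror the proof of Theorem~\ref{thspeed}, exploiting the simplifications afforded by the i.i.d.\ hypothesis. First, because $S_n\Phi(X)=\sum_{i=1}^n X_i$ depends only on the $n$ first coordinates, one may take $S_n\Phi_n=S_n\Phi$, so $\delta_n\Phi(B_\rho)=0$; the independence of the $X_i$ also forces $\Lambda_n\equiv\Lambda$ identically, hence $\delta_n\Lambda(B_\rho)=0$. Consequently the factor $\exp(3n\delta_n(\Lambda,\Phi)(B_\rho))$ drops out of the summability requirement, as does the additive term $\delta_n(\Lambda,\Phi)(B_\rho)$ from $\mathcal E(n,\eta)$. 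Second, the blocks $\Delta S_n(j,\omega)=\sum_{(j-1)n<i\le jn}X_i(\omega)$ are now genuinely i.i.d.\ across $j$, not merely weakly dependent, so that---as already announced in Remark~\ref{qual}(2)---the Rio-type inequality for $\alpha$-mixing sequences can be replaced by its sharper i.i.d.\ counterpart (Lemma~\ref{Rio}(2)); it is exactly this substitution that turns $d+3/2$ into $d+1$ in \eqref{BC}.

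Concretely I would cover $B$ by an $\epsilon_n$-net $N_n$ of cardinality $O(\epsilon_n^{-d})$. The Lipschitz constant of $\Lambda$ on $B_\rho$ is bounded by $\xi_1(B_\rho)$, and a parallel bound on the Lipschitz constant of $\Lambda_n^\omega$ on $B$, valid on the good event, follows from Cauchy--Schwarz applied to the tilted exponential weights defining $\Lambda_n^\omega$. This reduces the uniform control of $|\Lambda_n^\omega-\Lambda|$ over $B$ to a pointwise control on $N_n$, at the cost of the additive term $2\xi_1(B)\epsilon_n$. For fixed $\lambda\in N_n$ one writes
\[
\Lambda_n^\omega(\lambda)-\Lambda(\lambda)=\frac{1}{n}\log\Big(\frac{1}{k(n)}\sum_{j=1}^{k(n)}Z_j(\lambda)\Big),\qquad Z_j(\lambda)=\exp\big(\scal{\lambda}{\Delta S_n(j,\omega)}-n\Lambda(\lambda)\big),
\]
and the $Z_j(\lambda)$ are i.i.d.\ of mean $1$, with $\E Z_j(\lambda)^2=\exp(n(\Lambda(2\lambda)-2\Lambda(\lambda)))\le \exp(2n\xi_2(B_\rho))$ by a second-order Taylor expansion of $\Lambda$ around $\lambda\in B_\rho$.

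The core probabilistic step is then a two-sided concentration estimate for $\frac{1}{k(n)}\sum_j Z_j(\lambda)$ around $1$. The lower tail is obtained from a Chernoff bound based on $e^{-sx}\le 1-sx+s^2x^2/2$ (valid for $x\ge 0$), which after optimization in $s$ yields a sub-Gaussian tail with variance proxy $\exp(2n\xi_2(B_\rho))$. The upper tail must accommodate the unboundedness of the $Z_j(\lambda)$: truncate at a threshold $T_n$, apply Bernstein's inequality to the bounded variables $Z_j(\lambda)\wedge T_n$, and control the excess by Markov's inequality on $\sum_j \mathbf{1}_{\{Z_j(\lambda)>T_n\}}$. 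Calibrating the Chernoff tilt on the scale $\sqrt{\epsilon_n}$ and $T_n$ to match it produces a per-point bound
\[
\P\big(|\Lambda_n^\omega(\lambda)-\Lambda(\lambda)|>\eta\epsilon_n\big)\le C\,\epsilon_n^{-1}\exp\big(-\sqrt{\epsilon_n}[\log k(n)-n\varLambda^*(B)]\big)\exp\big(\xi_2(B_\rho)\epsilon_n n\big),
\]
where the prefactor $\epsilon_n^{-1}$---in place of the $\epsilon_n^{-3/2}$ produced by Lemma~\ref{Rio}(1)---is precisely the gain from the i.i.d.\ hypothesis. Multiplying by the $O(\epsilon_n^{-d})$ net points reproduces the summand in the hypothesis of the theorem, and Borel--Cantelli yields \eqref{unifconv} with $\mathcal E(n,\eta)=(\eta+2\xi_1(B))\epsilon_n$.

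When $B=\{\lambda\}$ is a single point, the net step and its $2\xi_1(B)\epsilon_n$ contribution become unnecessary; a direct application of the same concentration bound, together with a second-order Taylor expansion of $\Lambda$ at $\lambda$, yields the refined rate $\mathcal E(n)=\epsilon_n(1+\epsilon_n)/n$, the factor $1/n$ coming from the $\frac{1}{n}\log(\cdot)$ in the definition of $\Lambda_n^\omega$. The hard part is the joint calibration of the truncation $T_n$ and the Chernoff tilt so as to simultaneously absorb the unboundedness of the exponential weights $Z_j(\lambda)$, produce the exact prefactor $\epsilon_n^{-1}$ per net point (so that the total contribution over $N_n$ is $\epsilon_n^{-(d+1)}$ rather than $\epsilon_n^{-(d+3/2)}$), and localize the sub-Gaussian exponent on the scale $\sqrt{\epsilon_n}[\log k(n)-n\varLambda^*(B)]$ appearing in \eqref{BC}; once this balance is achieved, the rest of the argument goes through in parallel with the proof of Theorem~\ref{thspeed}.
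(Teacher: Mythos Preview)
Your setup is right: $\delta_n\Lambda=\delta_n\Phi=0$, the blocks $\Delta S_n(j,\cdot)$ are i.i.d., and Lemma~\ref{Rio0}(2) (von Bahr--Esseen) is the correct substitute for Corollary~\ref{Rio}. But then you abandon that lemma in favor of a Chernoff/Bernstein/truncation scheme, and this is where the argument fails.

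The claimed variance bound is wrong. One has $\E Z_j(\lambda)^2=\exp\big(n(\Lambda(2\lambda)-2\Lambda(\lambda))\big)$, and a second-order expansion in $p$ of $\Lambda(p\lambda)-p\Lambda(\lambda)$ at $p=1$ gives
\[
\Lambda(2\lambda)-2\Lambda(\lambda)=\Lambda^*(\grad\Lambda(\lambda))+\tfrac12\,{}^{t}\lambda D^2\Lambda(\lambda')\lambda,
\]
not $2\xi_2(B_\rho)$; the $\Lambda^*(\grad\Lambda(\lambda))$ term is dominant and makes $\E Z_j(\lambda)^2$ of order $\exp(n\varLambda^*(B))\asymp k(n)$ in the critical regime. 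Any second-moment device (your Chernoff lower tail via $e^{-sx}\le 1-sx+s^2x^2/2$, or Bernstein on the truncated variables) then produces a variance proxy that swamps the gain from averaging over $k(n)$ blocks, and the bound you announce cannot be reached. There is also a domain issue: nothing guarantees $2\lambda\in\mathcal D$, so $\E Z_j(\lambda)^2$ may be infinite.

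The paper's proof is much simpler and does exactly what your first paragraph suggests: follow the proof of Theorem~\ref{thspeed} verbatim, replacing Corollary~\ref{Rio} by Lemma~\ref{Rio0}(2). With $p=p_n=1+\sqrt{\epsilon_n}$ one bounds
\[
\E\Big|\sum_{j=1}^{k(n)}Z_{n,j}(\lambda)\Big|^{p_n}\le 2^{p_n}k(n)\,\E|Z_{n,1}(\lambda)|^{p_n}
\le 2^{2p_n}k(n)\exp\big(n[(p_n-1)\Lambda^*(\grad\Lambda(\lambda))+(p_n-1)^2\xi_2(B_\rho)]\big),
\]
which only needs $p_n\lambda\in B_\rho$ (automatic for large $n$), never $2\lambda$. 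Markov's inequality then gives a per-point prefactor $2^{2p_n}\epsilon_n^{-p_n}=O(\epsilon_n^{-1})$; the gain over Theorem~\ref{thspeed} is precisely that the constant $2^{p_n}$ in Lemma~\ref{Rio0}(2) stays bounded, whereas $C_{p_n}\sim 1/(p_n-1)=\epsilon_n^{-1/2}$ in Lemma~\ref{Rio0}(1) does not. Combined with the $O(\epsilon_n^{-d})$ grid this yields the $\epsilon_n^{-(d+1)}$ in the hypothesis, and the rest of the argument (Borel--Cantelli, \eqref{dyadique}, the convexity step) is identical to that of Theorem~\ref{thspeed}.
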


\begin{rem}
If $B=\{\lambda\}$, in Theorem~\ref{th2} we can take $\epsilon_n=\gamma\log(n)/n$ and $\log(k(n))/n-\Lambda^*(\grad\Lambda(\lambda))\ge\sqrt{ \gamma' \log(n)/n}$ with $\sqrt{\gamma\gamma'}>d+2+\gamma\cdot \frac{1}{2}~^{t}\lambda {\rm D}^2\Lambda(\lambda)\lambda$. Then $\mathcal{E}(n)\le (1+\epsilon_n)\epsilon_n/n= \gamma\log(n)/n^{2} +\gamma^2\log(n)^2/n^3$.
\end{rem}
\medskip



\section{Examples}\label{examples}
This section describes various contexts to which our results can be applied. We investigate applications to Brownian motion (Section~\ref{state2}), dynamical systems and number theory (Sections~\ref{Birkhoff} and~\ref{Gauss}), branching random walks (Section~\ref{brw}) and Poissonian random walks  (Section~\ref{poisson}).

\subsection{Fluctuations of the increments of Brownian motion}\label{state2}

Let $(W_t)_{t\in [0,1]}$ be a $d$-dimensional standard Browian motion defined on a probability space $(\Omega,\mathcal A,\P)$.  Let $(k(n))_{n\ge 1}$ be a sequence of positive integers. For each $n\ge 1$ and $1\le j\le n$ we denote $[(j-1)/n,j/n]$ by $J_{n,j}$ and the increment of $W$ over the interval $J_{n,j}$ is then denoted by $\Delta W(J_{n,j})$. 

For every $\omega\in \Omega$ and $n\ge 1$, define 
$$
\mu^\omega_n=\frac{1}{k(n)} \sum_{j=1}^{k(n)} \delta_{x_{n,j}(\omega)},\text{ with }x_{n,j}(\omega)=(k(n)/n)^{1/2}\Delta W(J_{k(n),j}).
$$
In other words, for each Borel set $B\subset \R^d$, we have 
$$
\mu^\omega_n(B)=\frac{\displaystyle \#\Big \{1\le j\le k(n): (k(n)/n)^{1/2}\Delta W(J_{k(n),j})\in B\Big\}}{k(n)}.
$$

The following result is essentially a refinement of Theorem~\ref{th3} applied to a sequence of independent centered Gaussian vectors with covariance matrix the identity. We will give a short proof in Section~\ref{pfbrown1}. 
\begin{thm}\label{brown}
Let $R>0$. Suppose that there exists a positive sequence $(\epsilon_n)_{n\ge 1}$ converging to 0  such that 
\begin{equation}\label{condbrown}
\sum_{n\ge 1} \exp \big (-\sqrt{\epsilon_n} [\log (k(n))-n(1+\sqrt{\epsilon_n})R^2/2)]\big )\epsilon_n^{-1}<\infty.
\end{equation}
With probability 1, for every Borel subset $\Gamma$ of $\widering B(0,R)$,  \eqref{ldpgen} holds for $(\mu_n^\omega)_{n\ge 1}$, with rate  function $I(x)=\|x\|^2/2$. 
\end{thm}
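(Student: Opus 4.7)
The plan is to specialize the strategy underlying Theorems~\ref{thspeed} and~\ref{th2} to the exactly Gaussian setting, where many of the error terms vanish identically. The starting observation is that the rescaled samples $x_{n,j}(\omega) = (k(n)/n)^{1/2}\Delta W(J_{k(n),j})$, for $j=1,\dots,k(n)$, are i.i.d.\ $N(0, I_d/n)$ under $\P$. A direct Laplace transform computation then gives $\Lambda_n(\lambda) = \|\lambda\|^2/2 = \Lambda(\lambda)$ exactly for all $n\ge 1$ and $\lambda\in\R^d$; in particular $\Lambda^*(x) = \|x\|^2/2$, the function $\Lambda$ is essentially smooth and $C^\infty$, the quantity $\delta_n(\Lambda,\Phi)$ appearing in Theorem~\ref{thspeed} is zero (one takes $S_n\Phi_n = S_n\Phi$), $\sup_{\lambda\in \bar B(0,R)}\Lambda^*(\grad\Lambda(\lambda))=R^2/2$, and $\xi_2(\bar B(0,R))= R^2/2$. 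With these identifications, condition \eqref{condbrown} is exactly what is needed to run, in this particular Gaussian setting, the Borel--Cantelli step that appears in the proof of Theorems~\ref{thspeed} and~\ref{th2}.

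First I would obtain a pointwise concentration bound at each fixed $\lambda\in \bar B(0, R)$. Set $Z_n(\lambda) = k(n)^{-1}\sum_{j=1}^{k(n)}\exp(n\scal{\lambda}{x_{n,j}})$, so that $\Lambda_n^\omega(\lambda)=\tfrac{1}{n}\log Z_n(\lambda)$ and $\E[Z_n(\lambda)]=\exp(n\|\lambda\|^2/2)$. Markov's inequality directly bounds the upper tail $\P[\Lambda_n^\omega(\lambda)\ge \Lambda(\lambda) + \sqrt{\epsilon_n}R^2/2]$ by $\exp(-n\sqrt{\epsilon_n}R^2/2)$ when $\|\lambda\|\le R$, while a Paley--Zygmund type argument, using the explicit value $\E[Z_n(\lambda)^2]=\exp(2n\|\lambda\|^2)/k(n) + (1-1/k(n))\exp(n\|\lambda\|^2)$, bounds the lower tail by an expression of the form $\epsilon_n^{-1}\exp(-\sqrt{\epsilon_n}[\log k(n)-n(1+\sqrt{\epsilon_n})R^2/2])$, which is precisely the summable term in \eqref{condbrown}.

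Next I would upgrade pointwise control at a single $\lambda$ into uniform control over $\bar B(0,R)$, following the same scheme as in Theorems~\ref{thspeed} and~\ref{th2}: take an $\epsilon_n$-net of the slightly enlarged ball $\bar B(0, R+\rho)$, apply the pointwise bound at each net point with a union bound and Borel--Cantelli, then propagate to an arbitrary $\lambda\in\bar B(0,R)$ by convexity of $\Lambda_n^\omega$ combined with the $C^1$ regularity of $\Lambda$. In the exact Gaussian case the cardinality of the net contributes only an $\epsilon_n^{-1}$ rather than an $\epsilon_n^{-d}$ factor; for instance, one may first handle the sphere $\{\|\lambda\|=R\}$ via the rotational invariance in distribution of the family $(x_{n,j})_{j}$ and then extend inward using $t\mapsto \Lambda_n^\omega(t\lambda)$ for $t\in[0,1]$, which is convex on $[0,1]$ with known value at $t=0$. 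The resulting statement is that $\max_{\lambda\in\bar B(0,R)}|\Lambda_n^\omega(\lambda) - \Lambda(\lambda)| \to 0$ almost surely.

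Finally, once this uniform convergence is established, the LDP on Borel subsets of $\widering B(0,R)$ follows as in Corollary~\ref{corth1}. Since $\grad\Lambda(\lambda)=\lambda$, every $x\in\widering B(0,R)$ is an exposed point of $\Lambda^*$ whose exposing hyperplane is $\lambda=x\in\widering B(0,R)$, so Theorem~\ref{LDPloc} applied to $\mu_n^\omega$ yields $\lim_{\epsilon\to 0}\lim_n \tfrac{1}{n}\log\mu_n^\omega(B(x,\epsilon)) = -\|x\|^2/2$ almost surely, which gives the lower bound in \eqref{ldpgen} on any open $G\subset\widering B(0,R)$. The upper bound on a closed $F\subset\widering B(0, R)$ follows from a standard Chernoff argument using the uniform convergence of $\Lambda_n^\omega$ to $\Lambda$ on $\bar B(0,R)$. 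I expect the main obstacle to be the convexity propagation step: one needs a uniform in $\omega$ a priori upper bound on $\sup_{\lambda\in\bar B(0,R+\rho)}\Lambda_n^\omega(\lambda)$ in order to control the gradient of the convex function $\Lambda_n^\omega$ on $\bar B(0,R)$ and transfer the estimate from the net points to the whole ball, but in the exact Gaussian setting this can be supplied by a further application of Markov's inequality at the boundary of $\bar B(0,R+\rho)$, keeping the overall bound consistent with \eqref{condbrown}.
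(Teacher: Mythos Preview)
Your overall architecture (moment bound $\Rightarrow$ Borel--Cantelli $\Rightarrow$ convexity $\Rightarrow$ uniform convergence of $\Lambda_n^\omega$ to $\Lambda$ on $\bar B(0,R)$ $\Rightarrow$ LDP via Theorem~\ref{LDPloc}) is exactly the paper's route. The difficulty is that the specific concentration tools you propose do not reproduce the hypothesis \eqref{condbrown}.

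\medskip

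\textbf{The Paley--Zygmund step fails.} You invoke the second moment $\E[Z_n(\lambda)^2]$ to control the lower tail. A direct computation gives, for any $\theta\in(0,1)$,
\[
\P\big(Z_n(\lambda)\le \theta\,\E Z_n(\lambda)\big)\ \le\ 1-\frac{(1-\theta)^2(\E Z_n)^2}{\E Z_n^2}\ \le\ \frac{\exp(n\|\lambda\|^2)}{k(n)}+2\theta,
\]
so even with $\theta\to 0$ the bound is of order $\exp\big(-[\log k(n)-n\|\lambda\|^2]\big)$. This is summable for $\|\lambda\|\le R$ only when $\liminf_n \log k(n)/n>R^2$, whereas \eqref{condbrown} only forces $\liminf_n \log k(n)/n>R^2/2$. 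Thus under \eqref{condbrown} alone the second-moment argument cannot close, and the precise expression $\epsilon_n^{-1}\exp(-\sqrt{\epsilon_n}[\log k(n)-n(1+\sqrt{\epsilon_n})R^2/2])$ you claim is not what Paley--Zygmund produces. The paper instead applies Lemma~\ref{Rio0}(2) to the \emph{centered} i.i.d.\ variables $Z_{n,j}(\lambda)=\exp(\scal{\lambda}{S_n(j)}-n\Lambda(\lambda))-1$ with exponent $p_n=1+\sqrt{\epsilon_n}$; since $\Lambda(p\lambda)-p\Lambda(\lambda)=(p^2-p)\|\lambda\|^2/2$, this yields the two-sided estimate
\[
\P\Big(\Big|\tfrac{1}{k(n)}\sum_{j=1}^{k(n)}Z_{n,j}(\lambda)\Big|>\epsilon_n\Big)\ \le\ C\,\epsilon_n^{-p_n}\,k(n)^{1-p_n}\exp\big(n(p_n-1)(1+(p_n-1))\|\lambda\|^2/2\big),
\]
which for $\|\lambda\|\le R$ is exactly the summand in \eqref{condbrown}. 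The point is that a moment of order $p\downarrow 1$ is what converts the variance threshold $R^2$ into the Legendre threshold $\Lambda^*(\nabla\Lambda(\lambda))=\|\lambda\|^2/2\le R^2/2$.

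\medskip

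\textbf{The net and rotational invariance are not needed.} The factor $\epsilon_n^{-1}$ in \eqref{condbrown} comes entirely from $\epsilon_n^{-p_n}$ in Markov's inequality, not from any covering. Rotational invariance of the law of $(x_{n,j})_j$ tells you that the distribution of $\Lambda_n^\omega(\lambda)$ is constant on each sphere $\{\|\lambda\|=r\}$, but it does not reduce a \emph{simultaneous} bound over the sphere to a single point. The paper bypasses this entirely: once the pointwise bound above is summable for each fixed $\lambda\in\bar B(0,R)$, it runs Borel--Cantelli on a fixed countable dense subset of $\bar B(0,R)$ and then invokes convexity of the $\Lambda_n^\omega$ (Rockafellar, Th.~10.8) to obtain convergence on the whole ball. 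No $\epsilon_n$-net, no gradient control, no extra application of Markov at the boundary are required.
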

The choice $\epsilon_n=\gamma\log(n)/n$ and $\log(k(n))/n-(1+\sqrt{\epsilon_n})R^2/2\ge\sqrt{ \gamma' \log(n)/n}$ with $\sqrt{\gamma\gamma'}>2$ yields \eqref{condbrown}.

We also have a functional result based  on the LDP established by Schilder (see \cite{DZ} Th. 5.2.3): for $n\ge 1$, let $\nu_n$ stand for the distribution of $W/\sqrt{n}$ as a random element of $C_0([0,1])$, the space of $\R^d$-valued continuous functions $\phi$ over $[0,1]$ such that $\phi(0)=0$. Then $(\nu_n)_{n\ge 1}$ is exponentially tight and satisfies in $C_0([0,1])$ the LDP with good rate function
$$
I(\phi)=\begin{cases}\frac{1}{2} \int_0^1\phi'(t)^2\,{\rm d}t& \text{if }\phi\in H^1\\
\infty&\text{otherwise}
\end{cases},
$$
where $H^1$ stands for Sobolev space of absolutely continuous elements of $C_0([0,1])$ with  square integrable derivative. 

It follows from Shilder's theorem that if $X=(X_i)_{i\ge 1}$ is a sequence of independent standard Brownian motions and $S_n=X_1+\cdots +X_n$, the distributions of the variables $S_n/n$ also satisfy in $C_0([0,1])$ the LDP with rate $I$. Consequently we get almost surely the LDP with rate $I$ for the local fluctuations of $S_n$ in the sense of Theorem~\ref{corASLDP}.  This essentially yields the following result.

For each $n\ge 1$ and $1\le j\le k(n)$ denote by $W_{k(n),j}$ the standard Brownian motion $t\in [0,1]\mapsto k(n)^{1/2}\big (W((t+(j-1))/k(n))-W((j-1)/k(n))\big )$.  
For every $\omega\in \Omega$ and $n\ge 1$, define 
$$
\mu^\omega_n=\frac{1}{k(n)} \sum_{j=1}^{k(n)} \delta_{x_{k(n),j}(\omega)},\text{ with }x_{k(n),j}(\omega)=\frac{W_{k(n),j}}{n^{1/2}}.
$$
\begin{thm}\label{brownfonc}
Suppose that $\displaystyle \lim_{n\to\infty} \frac{\log k(n)}{n}=\infty$. With probability 1, $(\mu_n^\omega)_{n\ge 1}$ satisfies in $C_0([0,1])$ the LDP with good rate function $I$. 
\end{thm}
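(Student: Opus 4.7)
The plan is to deduce the result from Theorem~\ref{corASLDP} applied to a sequence of i.i.d.\ standard Brownian motions, and then to identify the random empirical measures produced there with the measures $\mu_n^\omega$ of the statement, which are built from the single Brownian motion~$W$.

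For the auxiliary application, let $(X_i)_{i\ge 1}$ be i.i.d.\ $d$-dimensional standard Brownian motions on $[0,1]$, viewed as random elements of the separable space $\mathcal Y = C_0([0,1])$, and take $\Phi(X)=X_1$, so that $S_n\Phi(X)=X_1+\cdots+X_n$. By scaling, $(X_1+\cdots+X_n)/n$ has the law of $W/\sqrt n$, so Schilder's theorem gives assumption \textbf{(A1)} with good rate function $I(\phi)=\tfrac12\int_0^1|\phi'(t)|^2\,\mathrm{d}t$ on $\mathcal D_I=H^1$, and also exponential tightness of the distributions $(\mu_n)_{n\ge 1}$. Independence of the $X_i$ makes $\alpha_{X,m}=0$ for $m\ge 1$, which yields \textbf{(A2)}. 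Choosing $\Phi_n=\Phi$ (a function of one coordinate only) gives $\delta_n\equiv 0$, hence \textbf{(A3)}. Because $\sup_{\phi\in H^1}I(\phi)=+\infty$, the first alternative in the hypotheses of Theorem~\ref{corASLDP} cannot be invoked, and one must use the second; our standing hypothesis $\log k(n)/n\to\infty$ is exactly that condition. Theorem~\ref{corASLDP} then yields almost surely the LDP in $C_0([0,1])$ with good rate function $I$ for
\[
\widetilde\mu_n^\omega \;=\; \frac{1}{k(n)} \sum_{j=1}^{k(n)} \delta_{\Delta S_n(j,\omega)/n},\qquad \Delta S_n(j,\omega) \;=\; X_{(j-1)n+1}(\omega)+\cdots+X_{jn}(\omega).
\]

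It remains to transfer this to the $\mu_n^\omega$ of the statement. For each fixed $n$, the independence of the Brownian increments on the disjoint intervals $[(j-1)/k(n),j/k(n)]$ combined with the scaling property shows that $(W_{k(n),j})_{1\le j\le k(n)}$ is a family of i.i.d.\ standard Brownian motions, exactly like the family $(\Delta S_n(j,\omega)/\sqrt n)_{1\le j\le k(n)}$ in the auxiliary setup, so $\mu_n^\omega$ and $\widetilde\mu_n^\omega$ share the same marginal law at every~$n$. The point I expect to require the most care — though it is essentially a bookkeeping remark — is that the conclusion of Theorem~\ref{corASLDP} is obtained through Borel--Cantelli estimates of the form $\sum_{n}\mathbb P\big(\mu_n^\omega(B(\phi,\epsilon))\notin[\mathrm e^{-n(I(\phi)+\eta)},\mathrm e^{-n(I(\phi)-\eta)}]\big)<\infty$, applied to $\phi$ in a countable dense subset of $C_0([0,1])$ and $\epsilon,\eta$ rational; since each such probability depends only on the one-dimensional marginal law of $\mu_n^\omega$ at index~$n$, it coincides in the two couplings, and the same summable bounds together with Borel--Cantelli produce the a.s.\ LDP in the $W$-coupling of the statement. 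This yields the weak LDP; the full LDP with good rate function follows because the exponential tightness of $(\mu_n)_{n\ge 1}$ provided by Schilder's theorem propagates, via the same Borel--Cantelli mechanism, to exponential tightness of $(\mu_n^\omega)_{n\ge 1}$ almost surely.
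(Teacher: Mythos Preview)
Your proposal is correct and follows essentially the same approach as the paper. Both arguments rest on the observation that the Borel--Cantelli estimates in the proof of Theorem~\ref{corASLDP} depend only on the law of $\mu_n^\omega$ at each fixed $n$; the paper phrases this as ``adapt the proof'' using the explicit decomposition $W_{k(n),j}/n^{1/2}=\frac{1}{n}\sum_{i=1}^n W_{k(n),j,i}$ built directly from $W$, while you introduce an auxiliary abstract i.i.d.\ sequence and transfer via equality of marginals --- but the underlying mechanism is identical.
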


\begin{rem}
It is possible to combine the ideas developed in this paper with those of \cite{Mog} to obtain results in the spirit of Theorem~\ref{brownfonc} for some L\'evy processes with jumps. 
\end{rem}

\subsection{Local fluctuations of Birkhoff sums and products of matrices with respect to Gibbs measures}\label{Birkhoff}

Let $\Sigma_m$ stand for the one sided symbolic space over a finite alphabet of cardinality $m\ge 2$: $\Sigma_m=\{0,\dots,m-1\}^{\N_+}$.  The set $\Sigma_m$ is endowed with the shift operation $T(\{t_n\}_{n=1}^\infty)= \{t_{n+1}\}_{n=1}^\infty$. Let $A$ be a $m\times m$ matrix with all entries equal to 0 and 1 and such that $A^p$ is positive for some $p\ge 1$. Then let $(\Sigma_A,T)$ be the associated topologically mixing subshift of finite type of $(\Sigma_m,T)$, i.e. $\Sigma_A=\{t\in\Sigma_m:\ \forall\ n\ge 1,\ A_{t_{n},t_{n+1}}=1\}$. 

We denote by $\mathcal M(\Sigma_A,T)$ the set of invariant probability measures under $T$. 

For $n\ge 1$ we define $\Sigma_{A,n}=\{(t_1\dots t_n)\in \{0,\dots,m-1\}^n: \ \forall\ 1\le k\le n-1,\ A_{t_{k},t_{k+1}}=1\}$. 

If $t\in \Sigma_A$  and $n\ge 1$ we denote $t_1\cdots t_n$ by $t_{|n}$ and for $w\in \Sigma_{A,n}$ the cylinder $\{t\in \Sigma_A:t_{|n}=w\}$ is denoted $[w]$. 

The set $\Sigma_A$ is also endowed with the standard ultra-metric distance $d(t,s)=m^{-|t\land s|}$, where $|t\land s|=\sup\{n:t_{|n}=s_{|n}\}$. 

If $\psi$ is a continuous function from  $\Sigma_A$ to $\R$, 
the topological pressure of $\psi$ is defined as $P(T,\psi)=\sup\{\nu(\psi)+h_\nu(T):\nu\in  \mathcal M(\Sigma_A,T)\}$, and one has (see \cite{Bowen})
$$
P(T,\psi)=\lim_{n\to\infty}\frac{1}{n}\sum_{w\in \Sigma_{A,n}}\sup_{y\in[w]} \exp (S_n\psi(y)).
$$

We say that $\psi$  satisfies the bounded distorsion property if
$$
\sup_{n\ge 1}v_n<\infty,\text{ where }v_n=\sup_{\substack{t,s\in \Sigma_A\\ t_{|n}=s_{|n}}} |S_n\psi(t)-S_n\psi(s)|<\infty.
$$
In this case, it is well known that $\sup\{\nu(\psi)+h_\nu(T):\nu\in  \mathcal M(\Sigma_A,T)\}$ is attained at a unique and ergodic measure called the equilibrium state of $\psi$ (see \cite{Bowen,Ru}). We will denote it by $\nu_\psi$.  This measure is a Gibbs measure, in the sense that there exists a constant $C>0$ such that 
\begin{equation}\label{gibbsm}
\forall\ n\ge 1,\ \forall t\in\Sigma_A,\ C^{-1}\exp(S_n\psi(t)-nP(T,\psi))\le \nu_\psi([t_{|n}])\le C\exp(S_n\psi(t)-nP(T,\psi)).
\end{equation}

Moreover, if $\Phi$ is a continuous mapping from $\Sigma_A$ to $\R^d$ such that each component of $\Phi$ satisfies the bounded distorsion property, then $\lambda\in\R^d\mapsto P(T,\langle \lambda,\Phi\rangle)$ is a $C^1$ mapping from $\R^d$ to $\R$ (see \cite{Ruelle2} and \cite{BF}).

\subsubsection{{\bf Results for Birkhoff sums}}\label{birksum} We fix a real valued potential $\psi$ on $\Sigma_A$  satisfying the bounded distorsion property.  Then, the process $X$ defined as the identity map of $\Sigma_A$ is stationary with respect to the ergodic measure $\nu_\psi$. We also fix $\Phi$, a continuous mapping from $\Sigma_A$ to $\R^d$ and define $(S_n\Phi(X))_{n\ge 1}$ as the sequence of Birkhoff sums of $\Phi$.
 
Thus, setting $(\Omega,\P)=(\Sigma_A,\nu_\psi)$, the quantities introduced in Section~\ref{State} take the following form. For all $n\ge 1$, $B\in B_{\R^d}$ and $\lambda\in\R^d$,
$$
\mu_n(B)=\nu_\psi \big (\{t\in \Sigma_A: \ S_n\Phi(t)/n\in B\}\big )
$$  
and 
$$
\Lambda_n (\lambda) =  \frac{1}{n} \log \E\exp\big ( \scal{\lambda}{S_n \Phi(X)} \big ) 
= \frac{1}{n}\int_{\Sigma_A}\exp( \scal{\lambda}{S_n\Phi(t)})\, {\rm d}\nu_\psi(t).
$$ 
Also, $\mu^\omega_n$ and $\Lambda_n^\omega$ are denoted $\mu^t_n$ and $\Lambda_n^t$ respectively and we have for $t\in \Sigma_A$,  $n\ge 1$,  and $\lambda\in\R^d$
$$
\mu^t_n(B)=\frac{\displaystyle \#\big \{1\le j\le k(n): S_n \Phi (T^{(j-1)n}t)/n\in B\big\}}{k(n)}.
$$
and
$$
\Lambda^t_n(\lambda)=\frac{1}{n}\log \int_{\mathbb{R}^d}\exp(n\scal{\lambda}{x}){\rm d}\mu_n^t (x).
$$

Due to the Gibbs properties of $\nu_\psi$ \eqref{gibbsm}, $\Lambda(\lambda)=\lim_{n\to\infty}\Lambda_n (\lambda)$ exists and takes the form
$$
\Lambda(\lambda)=P(T,\psi+ \langle \lambda,\Phi\rangle)-P(T,\psi).
$$ 
If, moreover, each component of $\Phi$ satisfies the bounded distorsion property then $\Lambda$ is $C^1$. Thus, condition {\bf(A1'')} (hence {\bf (A1')}) hold with $\mathcal{D}_\Lambda=\R^d$. Moreover, $\delta_n\Lambda(B)=O(1/n)$ for bounded sets $B$. 

For {\bf (A2)} to hold we must ask some mixing properties of $\nu_\psi$. It is quite simple to see that {\bf (A2)} holds under the stronger assumption that there exists $\gamma>0$ and $\theta>1$ such that $\alpha_{X,m}=O(\exp(-\gamma \log(m)^{\theta}))$.  Then, due to Theorem 1.11 in \cite{Baladi},  {\bf (A2)} holds as soon as the modulus of continuity of $\psi$, namely $\kappa(\psi,\cdot)$ satisfies $\kappa(\psi,\delta)=O\big (\exp\big (-\gamma(\log|\log(\delta)|)|^\theta\big )\big )$ as $\delta\to 0$ for some $\gamma>0$ and $\theta>1$. Also  {\bf (A2')} holds as soon as $\kappa(\psi,\delta)=O\big (\exp\big (-\gamma|\log(\delta)|^\theta\big )\big )$ as $\delta\to 0$ for some $\gamma>0$ and $\theta>0$.

The function $\Phi$ being continuous on the compact set $(\Sigma_A,d)$, {\bf (A3)} always holds since we can always approximate $\Phi$ by a function $\Phi_n$ depending only on $(t_1,\dots,t_n)$ so that $\|S_n\Phi-S_n\Phi_n\|_\infty\le \sum_{k=1}^n\kappa(\Phi, m^{-k})=o(n)$.  

\medskip

Thus under the above conditions on $\Phi$ and $\psi$ assuring {\bf(A1'')} and {\bf(A2)} Theorem~\ref{th3} and Corollary~\ref{corth1} can be applied to this context and provide information regarding the convergence of $\Lambda_n^t$ to $\Lambda$ for $\nu_\psi$-almost every $t$. If, moreover, we assume that $\psi$ and the components of $\Phi$ are H\"older continuous, then $\Lambda$ is analytic (see for instance Th. 5 in \cite{Ru}) and {\bf(A2')} holds, so that we can apply Theorem~\ref{thspeed}. 

\medskip

In fact, even if $\Phi$ is only supposed continuous, $(\mu_n)_{n\ge 1}$ satisfies  in $\R^d$ the LDP with good rate function 
\begin{equation}\label{lambda*gibbs}
I(x)=\begin{cases}  \inf\Big\{P(T,\psi)- (h_\nu(T)+\nu(\psi)):\nu\in \mathcal{M}(\Sigma_A,T),\ \nu(\Phi)=x\Big\}=\Lambda^*(x)&\text{ if }x\in \mathcal{D}_I\\
\infty&\text{ otherwise}
\end{cases},
\end{equation}
where $\mathcal{D}_I=\{\nu(\Phi):\nu\in\mathcal{M}(\Sigma_A,T)\}$, and $I$ is bounded over the compact convex set $\mathcal{D}_I$. This LDP essentially follows from Theorem 6 of \cite{Young} (which deals with H\"older potentials), and the duality between the pressure and entropy functions (see \cite{FF,FFW,TV,FH} for details and related works). Thus {\bf (A1)} holds. It follows that we can apply  Theorem~\ref{corASLDP} and transfer the previous LDP to the local fluctuations  of $S_n\Phi$: 
\begin{thm}\label{thbirkhoff}
If $\displaystyle \liminf_{n\to\infty} \frac{\log k(n)}{n}>\sup_{x\in {\mathcal{D}_I}}\Lambda^*(x)$, then for $\nu_\psi$-almost every $t$, the sequence $(\mu^t_n)_{n\ge 1}$ satisfies in $\R^d$ the LDP with good rate function given by \eqref{lambda*gibbs}.
\end{thm}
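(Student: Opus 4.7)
The plan is to cast the statement as a direct application of Theorem~\ref{corASLDP} to the process $X$ equal to the identity map on $\Sigma_A$ under $\nu_\psi$, with observable $\Phi$, and therefore to verify its three hypotheses (A1)--(A3) together with exponential tightness of $(\mu_n)_{n\ge 1}$.

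Condition (A1) is the assertion that $(\mu_n)_{n\ge 1}$ satisfies in $\R^d$ the LDP with good rate function $I$ given by \eqref{lambda*gibbs}. For H\"older $\Phi$ this is Theorem~6 of \cite{Young}, and the identification of the rate function as the variational expression follows from the variational principle for the pressure and standard Fenchel--Legendre duality (see \cite{FF,FFW,TV,FH}). Condition (A2) is inherited from the mixing of $\nu_\psi$; under the regularity hypothesis on $\psi$ recalled just before the statement (via Theorem~1.11 of \cite{Baladi}) the $\alpha$-mixing coefficients $\alpha_{X,m}$ decay fast enough that $M_h<\infty$ for every $h>0$. Condition (A3) is immediate: approximating $\Phi$ uniformly by functions $\Phi_n$ depending on the first $n$ coordinates only, with $\|\Phi-\Phi_n\|_\infty \le \kappa(\Phi,m^{-n})$, one obtains
\[
\|S_n\Phi-S_n\Phi_n\|_\infty \le \sum_{k=1}^n \kappa(\Phi,m^{-k}) = o(n),
\]
so that $\delta_n=o(1)$, and $S_n\Phi_n$ obviously depends on at most the first $n\le 2n$ coordinates.

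Exponential tightness of $(\mu_n)_{n\ge 1}$ is free since the $\mu_n$ are supported in the fixed compact ball $\{x\in\R^d:\|x\| \le \|\Phi\|_\infty\}$. Moreover $\mathcal{D}_I = \{\nu(\Phi):\nu\in\mathcal{M}(\Sigma_A,T)\}$ is the compact convex image of the compact convex set $\mathcal{M}(\Sigma_A,T)$ under the continuous linear map $\nu\mapsto \nu(\Phi)$, and the convex lower semi-continuous function $I=\Lambda^*$ is bounded on it. Thus the assumption $\liminf_{n\to\infty} \log k(n)/n > \sup_{x\in\mathcal{D}_I}\Lambda^*(x)$ is exactly the hypothesis of Theorem~\ref{corASLDP}. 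That theorem then yields, with probability $1$, the weak LDP for $(\mu_n^t)_{n\ge 1}$ with rate $I$, and Theorem~\ref{ASLDP}(3), applied to the exponentially tight $(\mu_n)$, provides the almost sure exponential tightness of $(\mu_n^t)$, which upgrades the weak LDP to the full LDP with good rate function $I$.

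The one point needing genuine care is extending (A1) to a merely continuous $\Phi$, since Young's theorem is stated for H\"older potentials. To handle this I would approximate $\Phi$ uniformly by a sequence $\Phi^{(\ell)}$ of locally constant (hence H\"older) potentials -- obtained by averaging $\Phi$ on $\ell$-cylinders -- apply Young's LDP to the measures $\mu_n^{(\ell)}$ associated with $\Phi^{(\ell)}$, and then pass to the limit $\ell\to\infty$, using the uniform convergence $\Phi^{(\ell)}\to\Phi$ together with the upper semi-continuity of $\nu\mapsto h_\nu(T)$ on $\mathcal{M}(\Sigma_A,T)$ to identify the limit rate function with the variational expression in \eqref{lambda*gibbs}.
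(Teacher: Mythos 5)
Your proposal is correct and follows essentially the same route as the paper: verify that the standing hypotheses \textbf{(A1)}, \textbf{(A2)}, \textbf{(A3)} and exponential tightness hold in the Gibbs-measure/symbolic-dynamics setting, then invoke Theorem~\ref{corASLDP}. The paper treats this as an immediate corollary of the discussion preceding the theorem statement, whereas you spell out the verifications (including the exponential tightness coming from $\|S_n\Phi/n\|_\infty \le \|\Phi\|_\infty$ and a sketch of how to push Young's LDP from H\"older to merely continuous $\Phi$ by locally-constant approximation, which the paper delegates to the cited references \cite{FF,FFW,TV,FH}); the substance is the same.
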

Thus, we can also deal with the cases where the function $\Lambda$ is non differentiable at some $\lambda\in\R^d$ because $\scal{\lambda}{\Phi}+\psi$  have at least two equilibrium states with distinct entropies (see \cite{Ru} p. 52 for instance).

\medskip

\noindent{\it Some geometric applications.} The previous results have applications to geometric realizations of $(\Sigma_A,T)$, for instance on repellers of topologically mixing $C^{1+\epsilon}$ conformal maps  of Riemannian manifolds. For such a repeller $(J,f)$, they make it possible to describe the local fluctuations of $S_n \log \|Df\|$ almost everywhere with respect to any enough mixing Gibbs measure on $(J,f)$; this means that while with respect to such a measure $\nu_\psi$ one observes on almost every orbit an expansion ruled by a fixed Lyapounov exponent equal to $\nu_\psi(\log\|Df(x)\|)$, we can finely quantify local fluctuations with respect to this global property. The same can be done along the stable and unstable manifolds on locally maximal invariant sets of topologically mixing  Axiom A diffeomorphisms (see \cite{Bowen,Katok} for details on these dynamical systems). 

\medskip

Another application concerns the harmonic measure on planar Cantor repellers of $C^{1+\epsilon}$ conformal maps $f$; recall that given such a repeller $J$, this measure is the probability measure $\mu$ such that for each
 $t\in J$ and $r>0$, $\mu(B(t,r))$ is the probability that a planar Brownian
  motion started at $\infty$ attains $J$ for the first time at a point of $B(t,r)$. It turns out that $\mu$ is equivalent to the equilibrium state $\mu_\varphi$ of a H\"older potential $\varphi$ on $J$ (see \cite{Ca} or \cite{Ma}). Given another  enough mixing  Gibbs measure $\nu_\psi$, the ergodic theorem ensures that $\lim_{r\to 0^+}\log\mu_\varphi(B(t,r))/\log (r)=(P(\varphi)-\nu_\psi (\varphi))/\nu_\psi(\log\|Df\|)$ for $\nu_\psi$-almost every $t$. Then, our result yields information on the fluctuations with respect to this behavior.  Indeed, one can use the coding of $(J,f)$ by a subshift of finite type thanks to a Markov partition and apply our results to the pair $(S_n(\varphi-P(\varphi)),S_n\log\|Df\|)$. If we remember the origin of $\varphi$, this yields information on the local fluctuations of the Brownian motion around $\nu_\psi$-almost every $t$. This can be made more explicit in the case that $J$ is self-similar and homogeneous, for instance when $J=K^2$ with $K$ the middle third Cantor set. There our results provide, for $\nu_\psi$-almost every $t$, information on the distributions of the values 
  $$
 \frac{ \log \mu_\varphi(C_{jn}(t))}{\log \mu_\varphi(C_{(j-1)n}(t))}=1+\frac{S_n(\varphi-P(\varphi))(f^{(j-1)n}(t))}{jn\nu_\psi(\varphi-P(\varphi))}+\frac{\epsilon(jn)}{j}, \ 1\le j\le k(n),$$ 
 where $C_k(t)$ is the triadic cube of generation $k$ containing $t$ and $\lim_{k\to\infty}\epsilon(k)=0$. 
 
 \medskip
 
 The previous interpretations of our results about  the local behavior of Gibbs measures can be extended to the case of Axiom A diffeomorphisms invoked above. 
 
 Thus, to summarize, while \cite{OP,OP2,Young} provide large deviations with respect to the almost sure asymptotic behavior of Birkhoff sums on a hyperbolic invariant set endowed with a Gibbs measure, our results provide a natural complement by describing the fluctuations with respect to this behavior on almost every orbit viewed by this measure.

\medskip

The next two subsections briefly discuss extensions to norms of Birkhoff products of matrices of the previous properties of Birkhoff sums of potentials. 

\subsubsection{\bf Birkhoff products of positive matrices} Suppose that $M$ is a mapping from $\Sigma_A$ to the set of positive square matrices of order $d\ge 1$, and fix an enough mixing Gibbs measure $\nu_\psi$. If the components $M_{i,j}$ are so that $\log(M_{i,j})$ has the bounded distorsion property then, one can apply Theorem~\ref{th3} to $S_n\Phi(X(t))=S_n\Phi(t)=\log \|M(t)M(T(t))\cdots M(T^{n-1}(t))\|$ with respect to $\nu_\psi$.  Indeed, the convergence of $\Lambda_n (\lambda)$ for $\lambda\in \R$ comes from the Gibbs property of $\nu_\psi$ and the subadditivity and superadditivity properties of  $S_n\Phi(X)$, and the differentiability of $\Lambda$ comes from the variational principle for subadditive potentials (see \cite{Feng04} for instance). If the components of $M$ are only supposed continuous $\Lambda_n (\cdot)$ still converges but may be non differentiable. It is then possible to extend the result explained in the previous subsection and show that {\bf (A1)} holds for $S_n\Phi$ with the good rate function $I$ still satisfying \eqref{lambda*gibbs} and bounded over $\mathcal{D}_I$. The only difference is that here $\nu(\Phi)$ is defined as $\lim_{n\to\infty}n^{-1} \int_{\Sigma_A}S_n\Phi(t)\, {\rm d}t$.

\subsubsection{\bf Bernoulli products of invertible matrices} Suppose that we are given $M_1,\dots,M_m$, $m$ matrices of $GL(d,\mathbb C)$  such that there is no proper non-zero linear subspace $V$ of $\mathbb C^d$ such that $M_i(V)\subset V$. Then, define $M(t)=M_{t_1}$ and $S_n\Phi(X(t))=S_n\Phi(t)=\log \|M_{t_1}\cdots M_{t_n}\|$ for $t\in \Sigma_m$. Nice superadditivity and subadditivity properties (see \cite{Feng2009}) make it possible to extend the results of the previous section to this context.  We do not enter into the details.

\subsection{Local fluctuations in the continued fraction expansion of Lebesgue-almost every point}\label{Gauss}
The interval $[0,1)$ is endowed with the dynamics of the Gauss transformation $f(0)=0$, $f(t)=1/t-\lfloor 1/t\rfloor$ if $t\in (0,1)$. Then, the continued fraction expansion of an irrational number $t\in (0,1)$ is represented by the sequence $[a_1(t);a_2(t);\dots; a_n(t);\dots]$, where $a_1(t)=\lfloor 1/t\rfloor $ and $a_n(t)=a_1(f^{n-1}(t))=\lfloor 1/f^{n-1}(t)\rfloor $. The Gauss measure $\mu_G$ whose density with respect to the Lebesgue measure on $[0,1)$ is  $1/(1+t)\log(2)$ is ergodic with respect to $f$, and it possesses the strong mixing properties required in {\bf (A2)} (see \cite{Bil} for instance). Now let $\Phi(t)=\log a_1(t)$ for $t\in (0,1)$. An application of the Birkhoff ergodic theorem proves that for Lebesgue almost every  $t$, one has $S_n\Phi(t)=\sum_{k=0}^{n-1}\log a_k(t)\sim n \int_0^1\log a_1(t)\ {\rm d}\mu_G(t)$. 

Here we are concerned with the limit of  $\Lambda_n(\lambda)=n^{-1}\log\int_0^1\exp(\lambda S_n\Phi(t)) \ {\rm d}\mu_G(t)$ whenever it exists. For each $n\ge 1$ and each sequence $a_1,\dots a_n$ of integers let us denote by $I_{a_1,\cdots, a_n}$ the interval $\{t\in [0,1): [a_1(t);a_2(t);\dots; a_n(t)]=[a_1;a_2;\dots; a_n]\}$. It is clear that the question reduces to studying $n^{-1}\log \sum_{(a_1,\dots,a_n)\in (\N_+)^n} (a_1\cdots a_n)^\lambda|I_{a_1,\cdots, a_n}|$; this sequence converges for $\lambda<1$ to a limit $\Lambda(\lambda)$ analytic in $\lambda$ (see Section 4 of \cite{FanKhintchine}). Consequently, Theorem~\ref{th3}, Corollary~\ref{corth1} and Theorem~\ref{thspeed} provide large deviations properties for the local fluctuations of $\log (a_1(t))+\cdots+\log (a_n(t))$ almost everywhere with respect to the Lebesgue measure. 

\medskip

The previous example can be generalized by studying the local fluctuations of the Birkhoff sums associated with good potentials on the symbolic space over an infinite alphabet with respect to enough mixing Gibbs measures. We refer the reader to \cite{FanKhintchine} and \cite{Sarig} for further examples and references.

\subsection{Local fluctuations of branching random walks (BRW) with respect to generalized branching measures}\label{brw}$\ $

Let $(N,(\psi_1,\Phi_1),(\psi_2,\Phi_2),\dots)$ be a random vector taking values in $\N_+\times ({\R}\times \R^d)^{\N_+}$. In the sequel, the distribution of $N$ will define a supercritical Galton-Watson tree, on the boundary of which will live a Mandelbrot measure determined by $(\psi_1,\psi_2,\dots)$, with respect to which we will look almost everywhere at the local fluctuations of a branching random walk whose distribution is determined by $(\Phi_1,\Phi_2,\dots)$. Here, $(\psi_1,\psi_2,\dots)$ and $(\Phi_1,\Phi_2,\dots)$ play roles analogous to the potentials $\psi$ and $\Phi$ in the previous section. 

Let $\{(N_{u0},(\psi_{u1},\Phi_{u1}),(\psi_{u2},\Phi_{u2}),\dots)\}_u$ be a family of independent copies of the vector $(N,(\psi_1,\Phi_1),(\psi_2,\Phi_2),\dots)$ indexed by the finite sequences $u=u_1\cdots u_n$, $n\ge 0$, $u_i\in\N_+$ ($n=0$ corresponds to the empty sequence denoted $\emptyset$), and let $\TT$ be the Galton-Watson tree with defining elements $\{N_u\}$: we have $\emptyset \in \TT$ and, if $u\in \TT$ and $i\in\N_+$ then $ui$, the concatenation of $u$ and $i$, belongs to $\TT$ if and only if $1\le i\le N_u$. Similarly, for each $u\in \bigcup_{n\ge 0} \N_+^n$, denote by $\TT(u)$ the Galton-Watson tree rooted at $u$ and defined by the $\{N_{uv}\}$, $v\in \bigcup_{n\ge 0} \N_+^n$.

The probability space over which these random variables are built is denoted $(\Upsilon,\mathcal A, \PP)$, and the expectation with respect to $\PP$ is denoted $\EE$. 

\medskip

Let us define the $\R\cup\{\infty\}$-valued convex mapping
$$
\LL:\lambda\in\R^d\mapsto \log  \EE\Big (\sum_{i=1}^N\exp(\psi_i+\scal{\lambda}{\Phi_i}) \Big ).
$$
We assume that 
$$
\EE\Big (\sum_{i=1}^N\exp(\psi_i) \Big )=1,\ \log \EE\Big (\sum_{i=1}^N\psi_i\exp(\psi_i) \Big )<0\text{ and } \EE\Big (\Big (\sum_{i=1}^N\exp(\psi_i) \Big )\log^+\Big (\sum_{i=1}^N\exp(\psi_i) \Big )\Big )
<\infty.$$
Then, it is known (see \cite{MJFM,KP,LPP}) that for each $u\in \bigcup_{n\ge 0} \N_+^n$, the sequence
$$
Y_n(u)=\sum_{v=v_1\cdots v_n\in \TT(u)}\exp(\psi_{uv_1}+\cdots +\psi_{uv_1\cdots v_n})
$$
is a positive uniformly integrable martingale of expectation 1 with respect to the natural filtration. We denote by $Y(u)$ its $\PP$-almost sure limit. By construction, the random variables so obtained are identically distributed and positive. Also, the Galton-Watson tree $\TT$ is supercritical. 

Now, for each $u\in  \bigcup_{n\ge 0} \N_+^n$, we denote by $[u]$ the cylinder $u\cdot{\N_+}^{\N_+}$ and define
$$
\nu([u])=\mathbf{1}_{T}(u)\exp(\psi_{u_1}+\cdots +\psi_{u_1\cdots u_n})\, Y(u).
$$
Due to the branching property $Y(u)=\sum_{i=1}^{N_u}\exp(\psi_{ui})Y(ui)$, this yields a non-negative additive function of the cylinders, so it can be extended into a random measure $\nu_\gamma$ ($\gamma\in\Upsilon)$ on $\N_+^{\N_+}$  endowed with the Borel $\sigma$-field $\mathcal B=\mathcal B(\N_+^{\N_+})$. This measure has $\partial \TT=\bigcap_{n\ge 0}\bigcup_{u=u_1\cdots u_n\in T}[u]$ as support. 

Now, let $\Omega=\Upsilon\times \N_+^{\N_+}$. We can define on $(\Omega, \mathcal A\otimes \mathcal B)$ the probability measure
$$
\P(A)=\int_{\Upsilon}\int_{{\N_+}^{\N_+}}\mathbf{1}_A(\gamma,t){\rm d}\nu_\gamma(t) {\rm d}\PP(z).
$$ 
Then, it is known (see~\cite{LiuRouault} for instance) that the random variables $X_n(\gamma,t)=\Phi_{t_1\cdots t_n}(\gamma)$ are i.i.d. with respect to $\P$. If, moreover, $\grad \LL(0)$ exists then it equals $\E( {X}_1)$ and $({X}_1+\cdots  {X}_n)/n$ tends to $\grad \LL(0)$ $\P$-almost surely. In terms of the BRW $\sum_{i=1}^n\Phi_{t_1\cdots t_i}$ on $\TT$, this means that with $\PP$-probability 1, for $\nu_\gamma$-almost every $t\in \partial \TT$, we have $\lim_{n\to\infty}\sum_{i=1}^n\Phi_{t_1\cdots t_i}(\gamma)/n=\grad \LL(0)$. 

Moreover, in the present context, if we set $X=({X}_i)_{i\ge 1}$, since the $X_i$ are i.i.d. we have $\Lambda(\lambda)=\log \E\exp(\scal{\lambda}{X_1})=\LL(\lambda)$. Consequently, if $\LL$ is finite on an open convex subset $\mathcal{D}$ of $\R^d$, local fluctuations of the BRW $ \sum_{i=1}^n\Phi_{t_1\cdots t_i}$ are described $\PP$-almost surely $\nu_\gamma$-almost everywhere thanks to Theorem~\ref{th3}, Corollary~\ref{corth1} and Theorem~\ref{thspeed}. When $\Phi_i\in\{0,1\}$ for all $i\ge 1$, this is related to percolation on the Galton-Watson tree $\TT$ (see \cite{Lyons}).

\subsection{Local fluctuations of Poissonian random walks and covering numbers with respect to compound Poisson cascades}\label{poisson}$\ $

As in the previous section, the probability space over which we are going to define random variables is denoted $(\Upsilon,\mathcal A, \PP)$, and the expectation with respect to $\PP$ is denoted $\EE$. 
 
 \medskip
 
 Let $\xi>0$ and $\mathcal P$ a Poisson
point process in $\mathbb{R}\times (0,1]$ with intensity $\Lambda$
given~by 
$$
\displaystyle \Lambda ({\rm d}s
{\rm d}\lambda)=\frac{\xi {\rm d}s
{\rm d}\lambda}{\lambda^2}.
$$ 
For every $(s,\lambda)\in\mathcal P$ let $J(s,\lambda)=(s,s+\lambda)$. The question of knowing whether $\R_+\setminus \{0\}$ is or not  almost surely covered by the intervals $J(s,\lambda)$ has been raised in \cite{Mcov} in connexion with a similar problem previously raised in \cite{Dvo} for random arcs on the circle. These problems have been solved in \cite{S1,S2} (see also \cite{Kahane} for further information on this question). Then, works \cite{Fan,BarralFan} have been dedicated to the geometric heterogeneity of the asymptotic behavior of the covering numbers defined as follows (in fact, all the works mentioned above consider more generally the case of Poisson intensities invariant by horizontal translation). Here we rather look at local fluctuations of these numbers. 

\medskip


\smallskip

For every $t\in [0,1]$ and $n\ge 0$, the covering number of $t$
at height $e^{-n}$ by the Poissonian intervals $J(s,\lambda)$
 is defined as
\begin{eqnarray*}
N_n (\gamma,t)=\sum_{(s,\lambda)\in \mathcal P, \ \lambda>
e^{-n}}\mathbf{1}_{\{J(s,\lambda)\}}(t)=\#\big
\{(s,\lambda)\in \mathcal P:\ \lambda>e^{-n} , \ t\in J(s,\lambda)\big
\}\quad (\gamma\in\Upsilon).
\end{eqnarray*}
For every $t\in [0,1]$, this covering number can be seen as the ``Poissonian'' random walk $S_n(\gamma,t)=X_1(\gamma,t)+\cdots+X_n(\gamma,t)$ associated with the random variables $X_i(\gamma,t)$ defined~as
\begin{eqnarray*}
X_i(\gamma,t)&=&\sum_{\substack{(s,\lambda)\in \mathcal P, \\ e^{-i}<\lambda\le e^{-(i-1)}}}\mathbf{1}_{\{J(s,\lambda)\}}(t)\\
&=&\#\big
\{(s,\lambda)\in \mathcal P:\ e^{-i}<\lambda\le e^{-(i-1)}, \ t\in J(s,\lambda)\big
\} \quad (i\ge 1).
\end{eqnarray*}
The choice of $\Lambda$ ensures that the $X_i(\cdot, t)$ are i.i.d. We can describe the fluctuations of $S_n(\gamma,t)$ thanks to Theorems~\ref{th3} and~\ref{th2} by considering random measures on $\R_+$, namely compound Poisson cascades \cite{BM}. In fact, the invariance by horizontal translation of the constructions makes it possible to restrict ourselves to $[0,1]$ without loss of generality.

It turns out that we can also describe a more general model of Poissonian random walks in the spirit of branching random walks. To to this, we consider a random vector $(\psi,\Phi)\in\R\times \R^d$, and to each $(s,\lambda)\in \mathcal{P}$ we associated a copy $(\psi_{(s,\lambda)},\Phi_{(s,\lambda)})$ of  $(\psi,\Phi)$ in such a way that these random variables are independent and independent of $\mathcal{P}$.

For each $t\in[0,1]$ and $\phi\in\{\psi,\Phi\}$ we consider the random variables 
$$
X^\phi_i(\gamma,t)=\sum_{\substack{(s,\lambda)\in \mathcal P, \\ e^{-i}<\lambda\le e^{-(i-1)}}} \phi_{(s,\lambda)} \quad (i\ge 1,\ \gamma\in\Upsilon)
$$
as well as the Poissonian random walk $
S^\phi_n(\gamma,t)=X^\phi_1(\gamma,t)+\cdots+X^\phi_n(\gamma,t)$. An easy calculation shows that for any $(q,\lambda)\in\R\times \R^d$, for every $t\in\R_+$  one has $\EE\exp \big (qS_n^\psi(\cdot, t)+\scal{\lambda}{S_n^\Phi(\cdot,t)}\big )=\exp \big (n\xi\EE\big(\exp(q\psi+\scal{\lambda}{\Phi})-1\big)\big ).$

\medskip

We define over $[0,1]$ the sequence of random measures introduced in \cite{BM} as
\begin{equation}
\label{cpc}
\nu_{\gamma, n}({\rm d}t)= \big (\EE\exp(S_n^\psi (\cdot,t))\big )^{-1}\exp(S_n^\psi (\gamma,t))\,
{\rm d}t =  \exp\big (S_n^\psi (\gamma,t)-n\xi\EE\big(\exp(\psi)-1\big)\big )\big)\, {\rm d}t.
\end{equation}

Let $\tau(q)= (1-\xi)(1-q)+\xi\EE\big (\exp(q\psi)-q\exp(\psi)\big )$. We assume that $\tau'(1)<0$. Then, for $\PP$-almost every $\gamma$, $\nu_{\gamma,n}$ converges in the weak-star topology to a fully supported measure $\nu_\gamma$ over $[0,1]$, and whose total mass has expectation 1 (see \cite{BM}). We can defined on $\Omega=\Upsilon\times [0,1]$ endowed with $\mathcal A\otimes \mathcal B([0,1])$ the probability measure 
$$
\P(A)=\int_{\Upsilon}\int_{[0,1]}\mathbf{1}_A(\gamma,t){\rm d}\nu_\gamma(t) {\rm d}\PP(\gamma).
$$ 
Let 
$$
L:\lambda\in\mathbb{R}^d\mapsto \xi\EE\big(\exp(\psi+\scal{\lambda}{\Phi})-1\big ).
$$
The random variables $X^\phi_i(\gamma,t)$ are i.i.d with respect to $\P$, and it is not difficult to see that $\Lambda(\lambda)=\LL(\lambda)$. Thus, if $\LL$ is finite on an open convex subset $\mathcal{D}$ of $\R^d$, Theorems~\ref{th3}, Corollary~\ref{corth1} and Theorem~\ref{th2} applied to $X=(X_i)_{i\ge 1}$ with respect to $\P$ provide a description of the local fluctuations of $S^\Phi_n(\gamma,t)$, $\PP$-almost surely, for $\nu_\gamma$-almost every $t$.
\section{Conjecture on the ``randomness'' of fundamental constants}\label{Pi}
As mentioned in the introduction, our results lead us to formulate a new conjecture regarding how in any integer basis $m$ the digits of fundamental constants such as the number Pi or the Euler constant  look like almost every realization of a sequence of i.i.d random variables uniformly distributed in $\{ 0, \cdots, m-1\}$. This conjecture implies the normality property.

Recall the notations of Section~\ref{Birkhoff}. Consider a $\R^d$-valued continuous potential $\Phi$ defined on $\Sigma_m=\{0,\dots,m-1\}^{\N_+}$ endowed with the shift operation denoted $T$.  Consider a sequence $(k(n))_{n\ge 1}$ of positive integers. Recall that in Section~\ref{birksum} we have defined  for $t\in \Sigma_m$ the sequence of Borel measures $(\mu^t_n)_{n\ge 1}$ and logarithmic generating functions $(\Lambda^t_n)_{n\ge 1}$ as
$$
\mu^t_n=\frac{1}{k(n)} \sum_{j=1}^{k(n)} \delta_{x_{n,j}(t)}\text{ with } x_{n,j}(t)=S_n\Phi(T^{(j-1)n}t)/n
$$
and 
\begin{equation}\label{lambdaemp}
\Lambda^t_n(\lambda)=\frac{1}{n}\log \int_{\mathbb{R}^d}\exp(n\scal{\lambda}{x}){\rm d}\mu_n^t (x).
\end{equation}
Consider now the potential $\psi=0$ and the associated equilibrium state $\nu_\psi$,  i.e.  the measure of maximal entropy on $(\Sigma_m,T)$. We have $P(\psi)=\log(m)$.  

The process $X=(X_i)_{i\ge 1}$ defined on the probability space $(\Sigma_m,\nu_\psi)$ as $X(t)=(t_i)_{i\ge 1}$ is a sequence of  i.i.d random variables uniformly distributed in $\{ 0, \cdots, m-1\}$, and the rate function $I$ provided by \eqref{lambda*gibbs} takes the form
$$
I(x)=\begin{cases}  \inf\Big\{\log (m)-h_\nu(T):\nu\in \mathcal{M}(\Sigma_A,T),\ \nu(\Phi)=x\Big\}=\Lambda^*(x)&\text{ if }x\in \mathcal{D}_I\\
\infty&\text{ otherwise}
\end{cases},
$$
where $\mathcal{D}_I=\{\nu(\Phi):\nu\in\mathcal{M}(\Sigma_A,T)\}$ and $\Lambda(\lambda)=P(\scal{\lambda}{\Phi})$ for $\lambda\in\R^d$. 

\medskip

Let $D=(D_i)_{i\ge 1}$ be a sequence of digits in the integer basis $m$. We say that $D$ satisfies property $(\mathcal P)$ if 

\medskip

\noindent
{\bf Property $(\mathcal P)$:} The sequence $(\mu^D_n)_{n\ge 1}$ obeys in $\R^d$ the same LDP with rate $I$ as that provided by Theorem~\ref{thbirkhoff} for $(\mu_n^t)_{n\ge 1}$ (for $\nu_\psi$-almost every $t$).  and 

%
%
%
\begin{thm}
Property $(\mathcal P)$ implies the normality of $\sum_{i\ge 1}D_im^{-i}$ in basis~$m$.
\end{thm}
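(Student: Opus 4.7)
The plan is to apply Property~$(\mathcal P)$ to the indicator of each cylinder of $\Sigma_m$ and to use the exponential concentration contained in the LDP upper bound to control the frequencies of all digit blocks. Fix $n_0\ge1$ and a word $w=(\varepsilon_1,\dots,\varepsilon_{n_0})\in\{0,\dots,m-1\}^{n_0}$, and take as potential $\Phi_w=\mathbf{1}_{[w]}$; this is locally constant, hence H\"older continuous. With respect to the measure of maximal entropy one has $\nu_\psi(\Phi_w)=m^{-n_0}$; moreover $\Lambda_w(\lambda):=P(\lambda\Phi_w)-\log m$ is strictly convex and real-analytic with $\Lambda_w'(0)=m^{-n_0}$, so the associated rate function $I_w=\Lambda_w^*$ is nonnegative and vanishes only at $m^{-n_0}$ on its effective domain $\mathcal D_{I_w}\subset[0,1]$. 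Setting $x_{n,j}(D)=n^{-1}S_n\Phi_w(T^{(j-1)n}D)\in[0,1]$, an elementary telescoping gives, for every $k'\le k(n)$,
$$\sum_{j=1}^{k'}S_n\Phi_w(T^{(j-1)n}D)=f_w(k'n),\qquad f_w(N):=\#\{1\le\ell\le N:(D_\ell,\dots,D_{\ell+n_0-1})=w\}.$$

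Next, I would read off exponential concentration of $\mu_n^D$ at $m^{-n_0}$ from the LDP upper bound of Property~$(\mathcal P)$: for every $\epsilon>0$ there exists $c_\epsilon>0$ such that $\mu_n^D(\{|x-m^{-n_0}|\ge\epsilon\})\le e^{-c_\epsilon n}$ for $n$ large, i.e.\ at most $k(n)e^{-c_\epsilon n}$ of the indices $j\le k(n)$ satisfy $|x_{n,j}(D)-m^{-n_0}|\ge\epsilon$. Combined with $x_{n,j}(D)\in[0,1]$, this yields for any constant $\alpha>0$ and any $k'\in[\alpha k(n),k(n)]$
$$\left|\frac{f_w(k'n)}{k'n}-m^{-n_0}\right|=\left|\frac{1}{k'}\sum_{j=1}^{k'}x_{n,j}(D)-m^{-n_0}\right|\le\epsilon+\alpha^{-1}e^{-c_\epsilon n}.$$
I would arrange for Property~$(\mathcal P)$ to apply with $k(n)$ of controlled geometric type, for instance $k(n)=\lceil m^{(1+\delta)n}\rceil$ with $\delta>0$ (which satisfies the growth condition of Theorem~\ref{thbirkhoff}, since $\sup_{\mathcal D_{I_w}}I_w\le\log m$); the ratios $k(n+1)(n+1)/(k(n)n)$ are then uniformly bounded. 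Given $N$ large, set $n(N)=\min\{n\ge1:k(n)n\ge N\}$ and $k'=\lfloor N/n(N)\rfloor$; one checks $k'\le k(n(N))$, $N-k'n(N)<n(N)$, and $k'/k(n(N))\ge\alpha$ for some $\alpha>0$ independent of $N$. Splitting
$$\frac{f_w(N)}{N}=\frac{k'n(N)}{N}\cdot\frac{f_w(k'n(N))}{k'n(N)}+\frac{f_w(N)-f_w(k'n(N))}{N},$$
noting that $k'n(N)/N\to1$ and $(f_w(N)-f_w(k'n(N)))/N\le n(N)/N\to0$, and letting first $N\to\infty$ (hence $n(N)\to\infty$) and then $\epsilon\to0$, I conclude $f_w(N)/N\to m^{-n_0}$. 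Since $w$ and $n_0$ were arbitrary, this is exactly the normality condition \eqref{frequency} for $\sum_{i\ge1}D_im^{-i}$ in base~$m$.

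The main obstacle is this last step: Property~$(\mathcal P)$ only directly produces convergence along the block-aligned subsequence $N=k(n)n$, whereas normality requires convergence along every positive integer~$N\in\N$. The remedy is to exploit the genuinely exponential form of the LDP rather than only the convergence of the mean $\int x\,\ud\mu_n^D(x)$: the concentration still controls partial sums over any non-negligible fraction $k'\ge\alpha k(n)$ of the blocks, with error exponentially small in~$n$. This in turn constrains the auxiliary sequence $k(n)$ to have controlled geometric growth, which remains compatible with the range covered by Property~$(\mathcal P)$.
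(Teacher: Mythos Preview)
Your argument is correct and takes a genuinely different route from the paper's. The paper proves the equivalent statement that $S_p\varphi(D)/p\to\nu_\psi(\varphi)$ for every continuous $\varphi$: it fixes an auxiliary integer $n_0$, introduces a \emph{finite family} of sequences $k_{i,\ell}(n)=(i+\ell m^{-n_0})(2m)^n$ (with $1\le i\le 4m-1$, $0\le\ell\le m^{n_0}$), applies Property~$(\mathcal P)$ to each of them, and observes that every large $p$ lies within $O(m^{-n_0})\cdot n_p k_1(n_p)$ of some $n_p k_{i_p,\ell_p}(n_p)$. The paper then only needs the qualitative consequence $\mu_n^D(\{|x-\nu_\psi(\varphi)|>\epsilon\})\to 0$ to get $S_{nk(n)}\varphi(D)/(nk(n))\to\nu_\psi(\varphi)$ along each sequence, and lets $n_0\to\infty$ to absorb the $O(m^{-n_0})$ error.

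Your approach, by contrast, applies Property~$(\mathcal P)$ to a \emph{single} sequence $k(n)=\lceil m^{(1+\delta)n}\rceil$ and to the indicator potentials $\Phi_w$, but exploits the full \emph{exponential} LDP upper bound: since at most $k(n)e^{-c_\epsilon n}$ of the blocks $j\le k(n)$ can be $\epsilon$-bad, you control not just the full average over $k(n)$ blocks but every partial average over $k'\in[\alpha k(n),k(n)]$ blocks, which together with the bounded ratio $k(n+1)(n+1)/(k(n)n)$ covers all large $N$. Your route is more economical in the number of sequences invoked and goes straight to the frequency condition~\eqref{frequency}; the paper's route is more elementary in that it never uses the exponential rate, only that the bad set has vanishing $\mu_n^D$-mass, at the price of the grid-refinement argument in~$n_0$.
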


\begin{proof}
We prove the equivalent following fact: Property $(\mathcal P)$ implies that for all  real-valued continuous function $\varphi$ on $\Sigma_m$, $\lim_{p\to\infty} S_p\varphi (D)/p=\nu_\psi(\varphi)$. 

Let $\varphi$ be a real-valued continuous function on $\Sigma_m$. For $n\ge 1$ let $k_1(n)=(2m)^n$. We have $(n+1)k_1(n+1)\le 4m  nk_1(n)$ for all $n\ge 1$. 
Fix an integer $n_0\ge 1$, and for $n\ge n_0$,  $1\le i \le 4m-1$ and $0\le \ell \le m^{n_0}$ let $k_{i,\ell} (n)= (i+\ell m^{-n_0}) k_1(n)$.  The LDP of  property $(\mathcal P)$ holds for every sequence $(k(n))_{n\ge n_0}$ with $k(n)\in\{k_{i,\ell}(n):1\le i\le 4m-1,\ 0\le \ell\le m^{n_0}\}$, since  $\liminf_{n\to \infty} \log(k(n))/n>\log(m)$. 

If $p$ is a positive integer larger than $k_1(1)$, let $n_p$ be the largest integer $n$ such that $nk_1(n)\le p$.  By construction, $p\le 4m   n_pk_1(n_p)$. Let $(i_p,\ell_p)$ be the unique pair in $\{(i,\ell):1\le i\le 4m-1,\ 0\le \ell\le m^{n_0}\}$ such that $n_pk_{i_p,\ell_p}(n_p)\le p<n_pk_{i_p,\ell_p}(n_p)+ n_pk_{1}(n_p) m^{-n_0}$.

We have 
$$
\frac{S_p\varphi(D)}{p}=\frac{S_{n_pk_{i_p,\ell_p}(n_p)}}{n_pk_{i_p,\ell_p}(n_p)}+O(m^{-n_0})\quad (\text{as $p\to\infty$}),
$$
where the constant in $O(m^{-n_0})$ depends only on $\varphi$. Consequently, since at fixed $n_0$ we deal with the finite number of sequences $k(n)\in \{k_{i,\ell}(n):1\le i\le 4m-1,\ 0\le \ell\le m^{n_0}\}$, if we prove that the LDP of property $(\mathcal P)$ implies that $\lim_{n\to\infty} S_{nk(n)}\varphi (D)/nk(n)=\nu_\psi(\varphi)$ for each such sequence, we will get $\limsup_{p\to\infty} |\frac{S_p\varphi(D)}{p}-\nu_\psi(\varphi)|=O(m^{-n_0})$. Then, letting $n_0$ tend to $\infty$ will yield the desired conclusion. 

We reduced the problem to showing that $\lim_{n\to\infty} S_{nk(n)}\varphi (D)/nk(n)=\nu_\psi(\varphi)$ whenever $\liminf_{n\to \infty} \log(k(n))/n>\log(m)$. Suppose that $\liminf_{n\to \infty} \log(k(n))/n>\log(m)$. For $\epsilon>0$, we can write
$$
\left |\frac{S_{nk(n)}\varphi (D)}{nk(n)}-\nu_\psi(\varphi)\right |\le \int_{\R}|x-\nu_\psi(\varphi)|\,{\rm d}\mu_n^D(x)\le \epsilon+2\|\varphi\|_\infty\mu_n^D(\{x:|x-\nu_\psi(\varphi)|>\epsilon\}),
$$
and due to property $(\mathcal P)$, $\mu_n^D(\{x:|x-\nu_\psi(\varphi)|>\epsilon\})$ tends to $0$ as $n\to\infty$. Consequently, $
\limsup_{n\to\infty}\left |\frac{S_{nk(n)}\varphi (D)}{nk(n)}-\nu_\psi(\varphi)\right |\le \epsilon$ for all $\epsilon>0$.
\end{proof} 

\begin{rem}
{\rm One can wonder if, conversely, the normality of $\sum_{i\ge 1}D_im^{-i}$ implies property $(\mathcal P)$ for $D$. To begin with this question, it is interesting to seek an explicit normal number in basis $m$ for which property $(\mathcal P)$ holds; Champernowne's constant $C_m$ should be investigated.

}\end{rem}

Our conjecture is the following. 
\begin{con}\label{conjec} For every integer $m\ge 2$, the digits of the fractional part of either Pi or the Euler constant in basis $m$ satisfy $(\mathcal P)$. 
\end{con}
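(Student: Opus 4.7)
The conjecture is strictly stronger than the normality of Pi (and of the Euler constant), itself a notorious open problem on which essentially nothing is known unconditionally. Consequently any full proof along classical analytic or dynamical lines will run into the same obstructions that have blocked every attempt at normality. My realistic plan therefore splits into two halves: a precise numerical verification of the predicted behaviour, and an outline of what an analytical attack would require were the underlying obstacles ever overcome.

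On the numerical side, the first step is to fix a small base $m$ (say $m\in\{2,3,10\}$) and a small family of test potentials $\Phi$, for instance indicators of cylinders $\mathbf{1}_{[\varepsilon_1\cdots\varepsilon_{n_0}]}$ or coordinate functions $\Phi(t)=t_1$. For a range of block sizes $n$ and integer sequences $k(n)$ with $\liminf_{n\to\infty}\log k(n)/n > \sup_{x\in\mathcal{D}_I}\Lambda^*(x)$, I would compute the empirical measures $\mu_n^D$ directly from the known digits of Pi, produced by BBP-type algorithms in base $2$ (and hex) and by classical methods in base $10$, and compare the empirical log-generating function $\Lambda_n^D$ with the predicted limit $\Lambda(\lambda)=P(T,\scal{\lambda}{\Phi})$. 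Theorem~\ref{thbirkhoff} tells us exactly what to expect for $\nu_\psi$-almost every $t$, and Theorem~\ref{thspeed} together with Remark~\ref{qual}(3) predicts, in the appropriate regime, an error of order $\log(n)/n$. The signature of $(\mathcal P)$ is thus very specific and testable: not only should $\Lambda_n^D(\lambda)\to\Lambda(\lambda)$, but the error should match this rate. Complementary tests come from Theorem~\ref{th3}(2): for $x\in\widering{\mathcal D}_I$ and $\epsilon>0$ small the count $\#\{1\le j\le k(n) : S_n\Phi(T^{(j-1)n}D)/n\in B(x,\epsilon)\}$ should be of order $k(n)\exp(-n\Lambda^*(x))$.

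On the analytical side, the natural plan is to try to convert a BBP-type identity for Pi into a dynamical recoding: seek a hyperbolic system $(Y,S,\mu)$ with $\mu$ a suitably mixing Gibbs measure, a measurable factor map $\pi\colon(Y,S,\mu)\to(\Sigma_m,T,\nu_\psi)$, and a distinguished orbit $(S^n y_0)_{n\ge 0}$ whose $\pi$-projection reproduces the digits of Pi. The results of Section~\ref{Birkhoff}, applied to a $\mu$-typical orbit, could then in principle be transferred to this distinguished orbit, provided one controlled how non-generic the latter is with respect to $\mu$. The main obstacle is precisely this transfer step: there is no known mechanism for asserting that an explicit, non-random orbit of a hyperbolic system inherits the large-deviations behaviour of a $\mu$-generic one, and even the much weaker statement that the $\pi$-projection of $(S^n y_0)$ is $\nu_\psi$-generic is unknown for any explicit real number extracted from Pi. Accordingly, my proposal does not reach a theorem; the contribution of Section~\ref{Pi} must remain a careful calibration of the predicted error against observed empirical data for the first several billion digits, yielding strong evidence for $(\mathcal P)$ while an actual proof stays beyond reach.
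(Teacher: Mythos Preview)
Your assessment is correct and matches the paper's own treatment: the statement is a \emph{conjecture}, not a theorem, and the paper offers no proof---only numerical evidence via the empirical generating functions $\Lambda_n^D$ and their Legendre transforms for the first $1.6\times 10^8$ decimal digits of Pi and the Euler constant (Figure~\ref{fig1}). Your proposed numerical protocol is essentially the same as what the paper carries out, and your recognition that any analytic proof would have to subsume a proof of normality (hence is out of reach) is exactly the point.
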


Conjecture~\ref{conjec} is supported by numerical experiments, which focus on the validity of the conclusions of Theorem~\ref{th3} for $(\Lambda^D_n)_{n\ge 1}$. From the numerical point of view, the most tractable situations concern potentials that are constant over the cylinders of the first generation. In the context of digit frequency associated to normality of numbers, it is natural to consider potentials of the form $\Phi_a(t)=\mathbf{1}_{\{a\}}(t_1)$, with $a\in \{0,\dots, m-1\}$. Here, we show simulation results when $m=10$ and $a=0$; in this case $\displaystyle \Lambda (\lambda)= \log \frac{9+\exp(\lambda)}{10}$ and $\Lambda^*(x)=x\log (10 x)+(1-x)\log(10(1-x)/9)$. We use the 160 millions first decimals of Pi and the Euler constant available at \url{http://www.numberworld.org/constants.html} and \url{http://www.ginac.de/~kreckel/news.html}.

\medskip

At first we consider a realization $X_1,\cdots, X_N$ of $N=1.6\cdot10^8$ independent random variables uniformly distributed in $\{0,\dots, 9\}$, that are viewed as the $N$ first terms of the realization of an infinite  sequence of such independent variables $X_1,\cdots,X_n,\cdots$. In fact these digits are pseudo-random numbers provided by the Mersenne twister algorithm used in Matlab, so that actually we are also testing how such a sequence really looks like the theoretical one.

At each scale $n$, we choose a number of intervals $k(n) = \exp(n\Lambda^*( \Lambda' (\lambda_0)))$ with $\lambda_0 = 0.8$, so that $n\cdot k(n)\le N$ for $n\le 300$. Due to the fact that $\Phi_0(t)$ depends only on the first digit of $t$, $\Lambda_n^t$ is constant over the cylinder $[X_1\cdots X_{nk(n)}]$ which contains the random sequence $\widetilde D=X_1\cdots X_n\cdots $, and we can estimate it easily.   

Let $\lambda_{1}$ and $\lambda_2 $  the two solutions of the equation $\Lambda^*(\Lambda' (\lambda))=\Lambda^*(\Lambda' (0.8))$. One has $\lambda_1 \simeq -1.45$ and $\lambda_2 = \lambda_0 = 0.8$.

Figure~\ref{fig1}-$(a)$(left) illustrates the result of Theorem~\ref{th3}(1) and (3): the empirical logarithmic moment generating functions $\Lambda_n^{\widetilde D}$ converge to the function $\Lambda $ over the interval $(\lambda_1, \lambda_2)$, and on $(-\infty,\lambda_1]$ as well as  on $[\lambda_2,\infty)$, $\Lambda-\Lambda_n^{\widetilde D}$ converges to $\Lambda$ translated by an affine map.  Figure~\ref{fig1}-$(a)$(right) illustrates the same result in term of the Fenchel-Legendre transform $(\Lambda_n^{\widetilde D})^*$, which converges in the interval $(x_1, x_2)$, where $x_1 = \Lambda^{\prime}(\lambda_1) \simeq 0.0254$ and $x_2 = \Lambda^{\prime}(\lambda_2) \simeq  0.1983$ (the intervals of convergence are materialized by the dashed blue vertical lines). Moreover, on this figure one observes that the domain over which the functions $(\Lambda_n^{\widetilde D})^*$ are finite, which corresponds to $\overline{(\Lambda_n^{\widetilde D})'(\mathbb{R})}$, converges to the interval $[x_1,x_2]$. This is predicted by Theorem~\ref{th3}(2), since $\overline{(\Lambda_n^{\widetilde D})'(\mathbb{R})}$ is equal to the smallest closed interval containing $\{S_n\Phi(T^{n(j-1)}\widetilde D)/n:1\le j\le k(n)\}$.  

Figure~\ref{fig1}-$(b)$ numerically shows that, in terms of the convergence of the  logarithmic moment generating functions $\Lambda^D_{n}$ and their Fentchel-Legendre transform, the first $160$ million decimals of Pi  behave exactly like the previous sequence $X_1,\dots, X_N$ (though we do not expose the corresponding figures here, we verified that the same holds for all function $\Phi_a$, $a = 0, \cdots, 9$). The same conclusions hold for the $160$ millions first decimals of the Euler constant, as shown on Figure~\ref{fig1}-$(c)$.

\begin{figure}
\begin{tabular}{ccccc}
\multicolumn{5}{c}{$(a)$} \\
\multicolumn{5}{c}{i.i.d uniform sequence} \\
\rotatebox{90}{\hspace{1.7cm} $|\Lambda^{\widetilde D}_n -\Lambda|$} &
\includegraphics[width=0.4\textwidth]{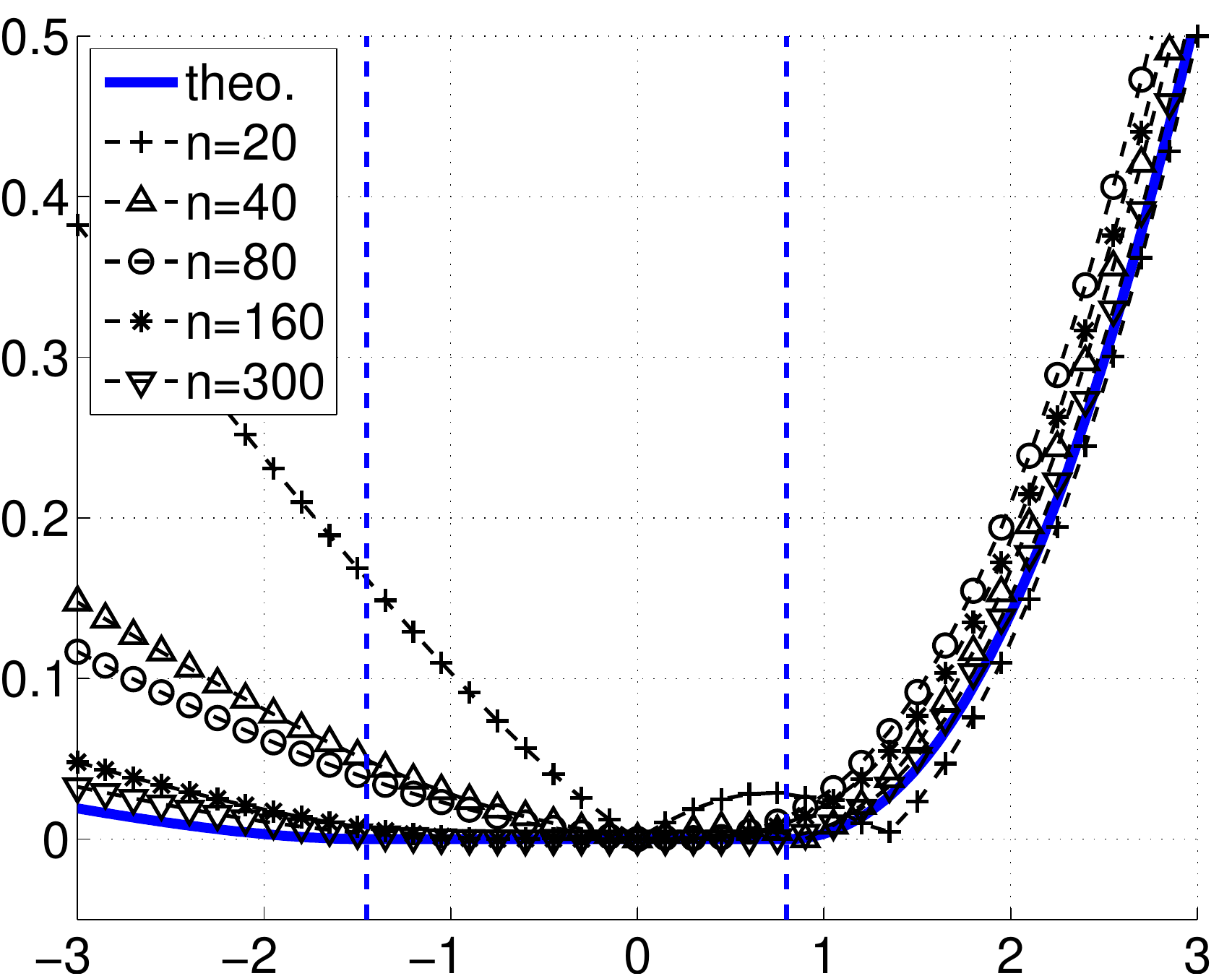} & &
\rotatebox{90}{\hspace{1.8cm} $(\Lambda^{\widetilde D}_n)^*, \Lambda^*$} &
\includegraphics[width=0.4\textwidth]{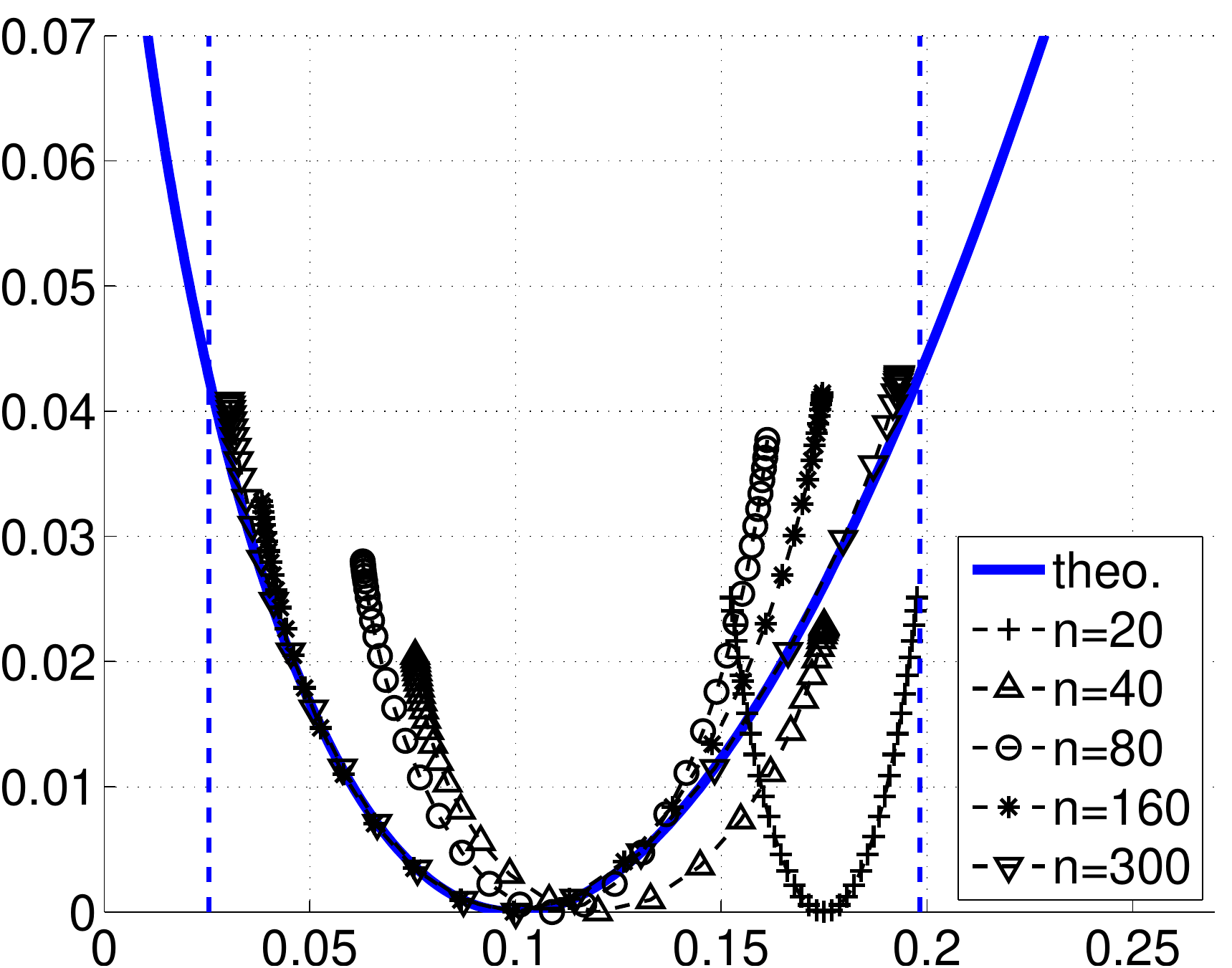} \\
& $\lambda$ & \hspace{0.2cm} & &  $x$ \\

\multicolumn{5}{c}{$(b)$} \\
\multicolumn{5}{c}{decimal digits of Pi} \\
\rotatebox{90}{\hspace{1.7cm} $|\Lambda^D_n -\Lambda|$} &
\includegraphics[width=0.4\textwidth]{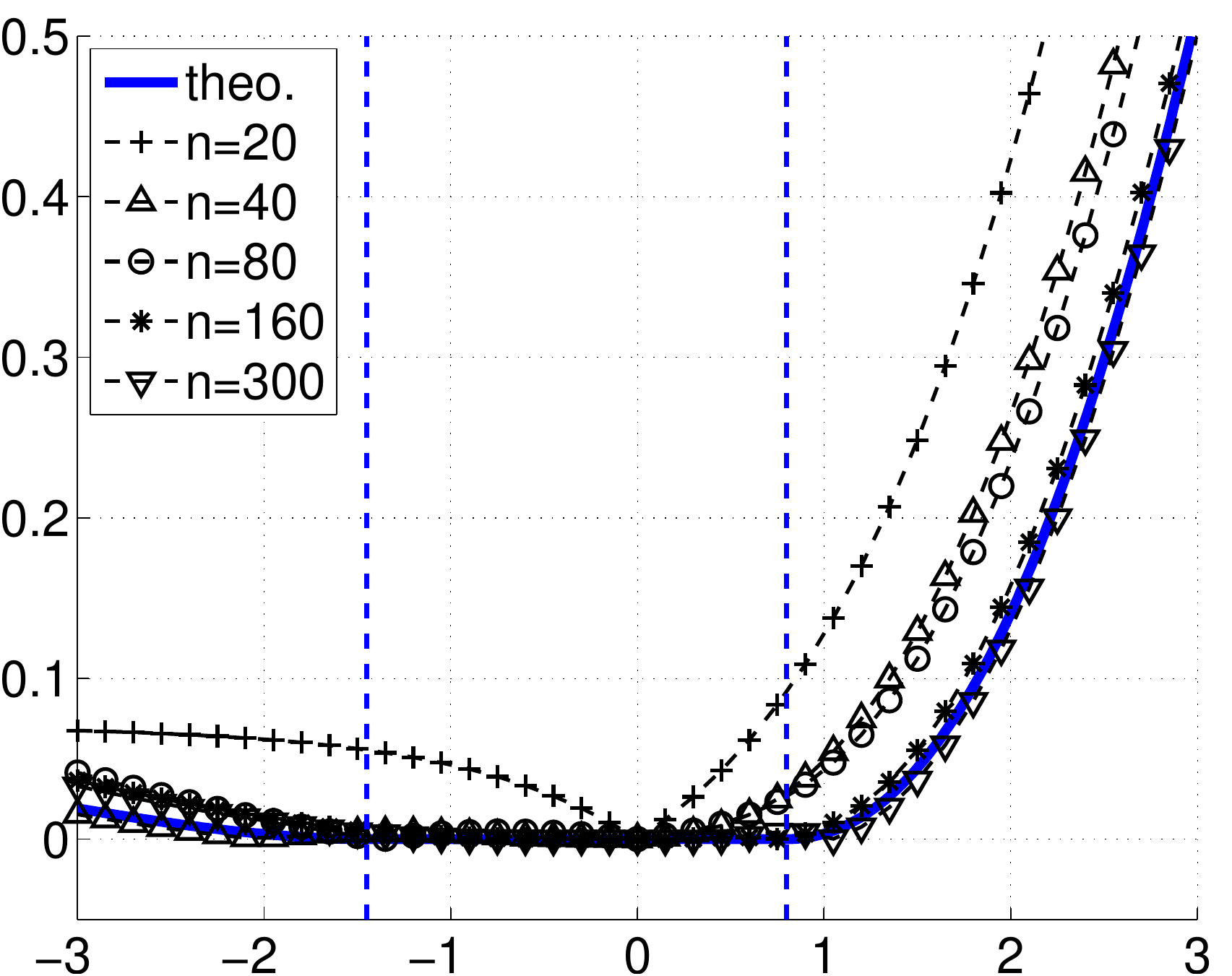} & &
\rotatebox{90}{\hspace{1.8cm} $(\Lambda^D_n)^*, \Lambda^*$} &
\includegraphics[width=0.4\textwidth]{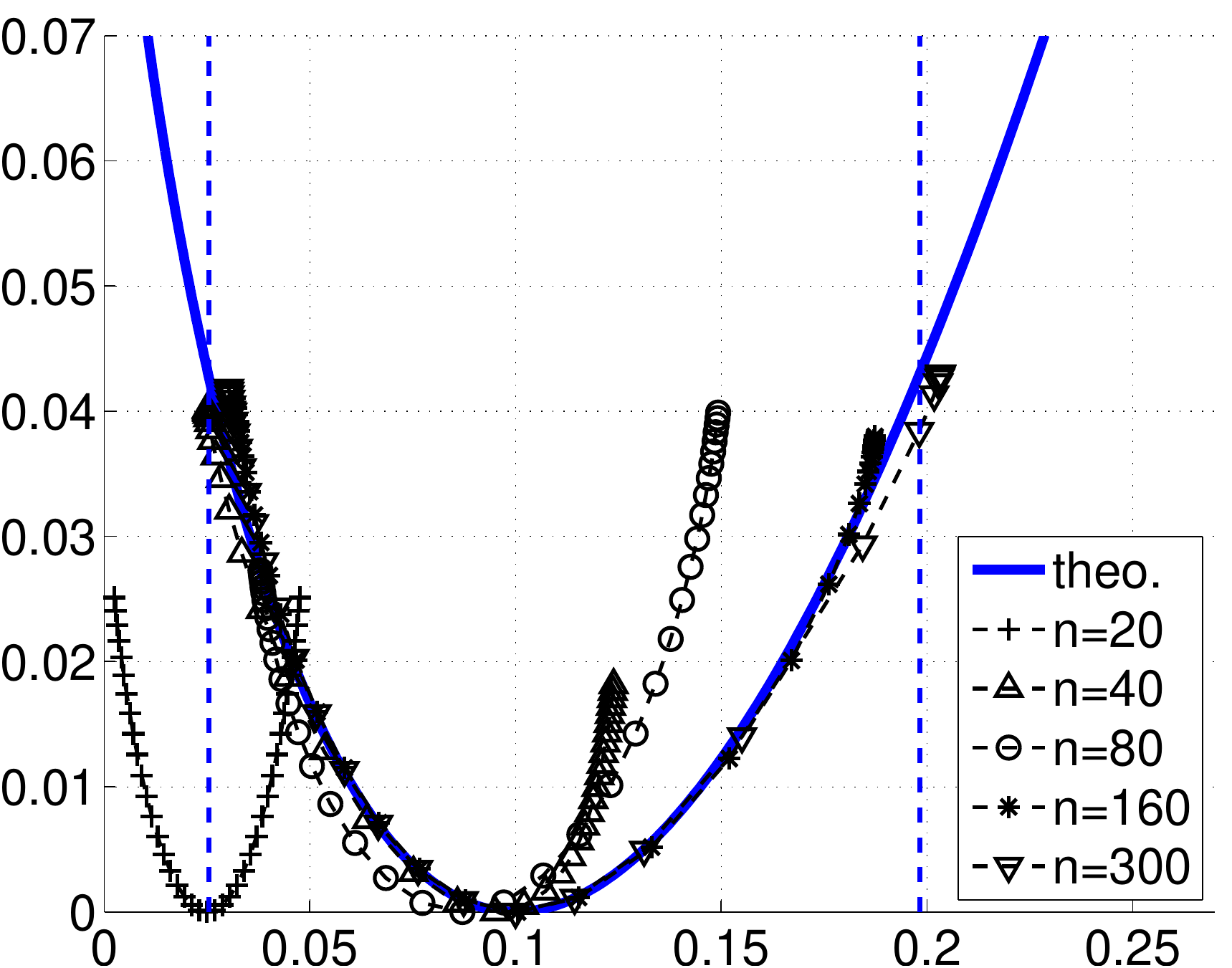} \\
& $\lambda$ & \hspace{0.2cm} & &  $x$ \\

\multicolumn{5}{c}{$(c)$} \\
\multicolumn{5}{c}{decimal digits of the Euler constant} \\
\rotatebox{90}{\hspace{1.7cm} $|\Lambda^D_n -\Lambda|$} &
\includegraphics[width=0.4\textwidth]{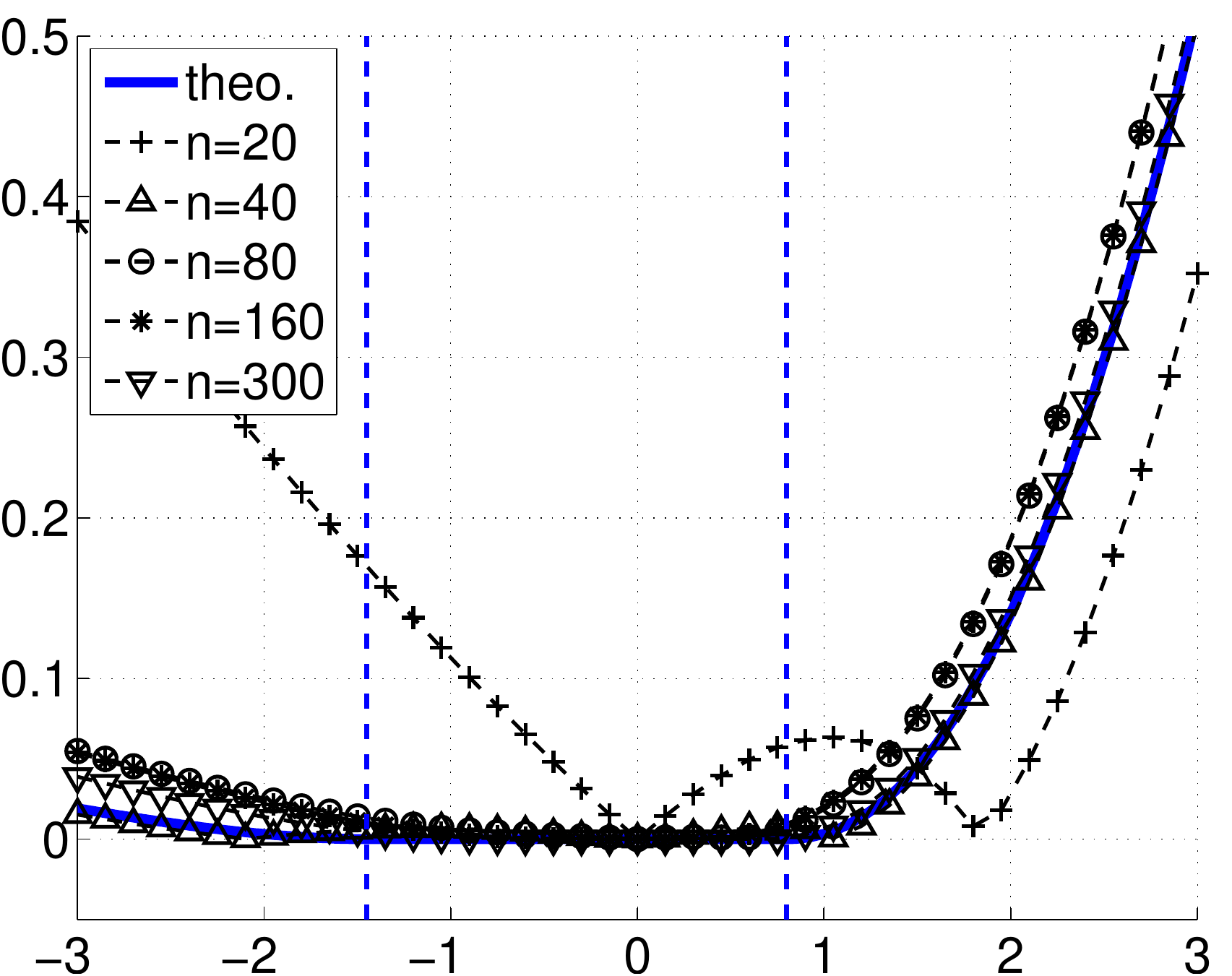} & &
\rotatebox{90}{\hspace{1.8cm} $(\Lambda^D_n)^*, \Lambda^*$} &
\includegraphics[width=0.4\textwidth]{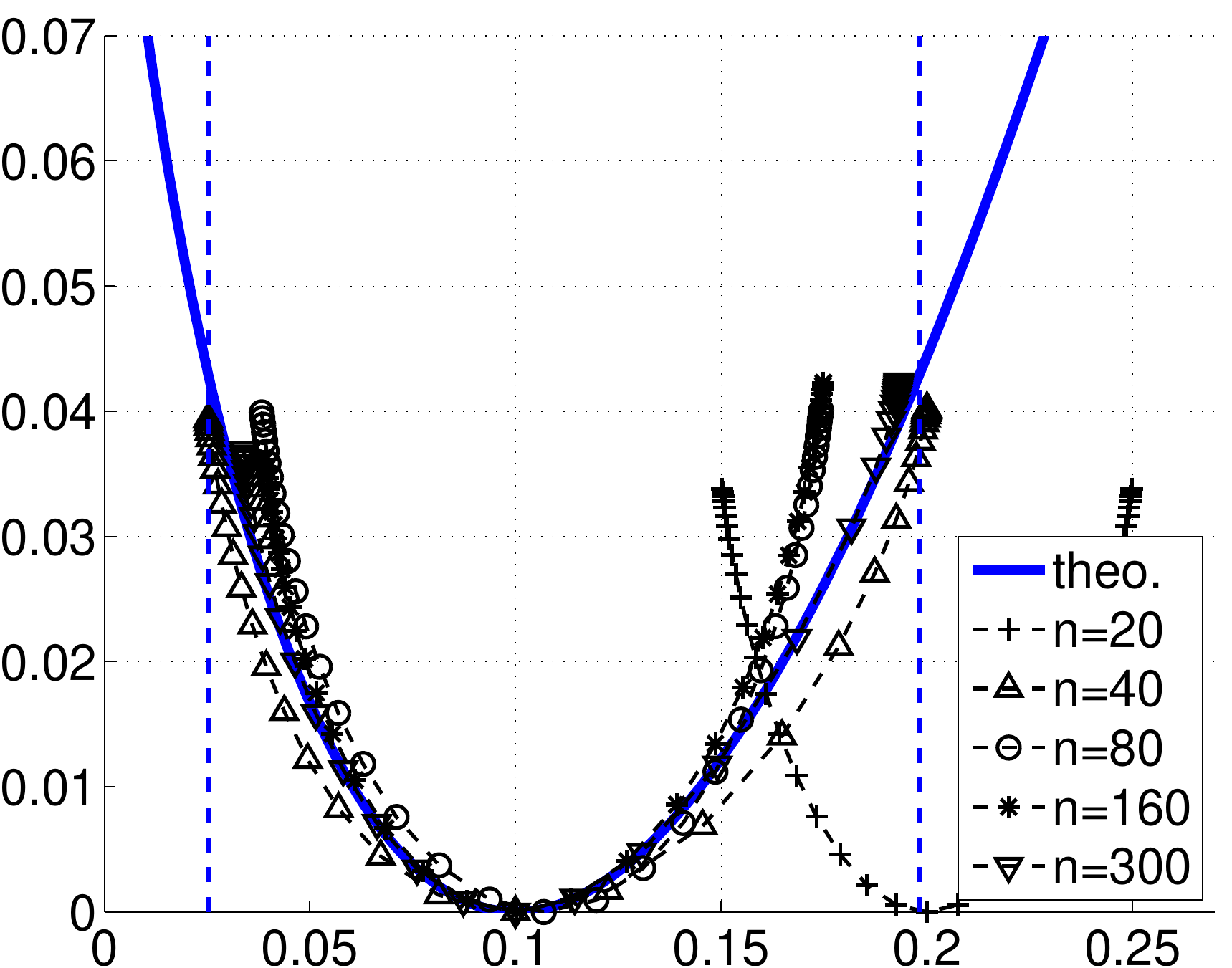} \\
& $\lambda$ & \hspace{0.2cm} & &  $x$ \\
\end{tabular}
\caption{Behavior of the logarithmic moment generating functions $\Lambda_n^{t}$ and their Fenchel-Legendre transform for $t$ equal to an i.i.d sequence of random variables uniformly distributed in $\{ 0, \cdots, 9\}$ and for $t$ equal to the the decimal digits of the number Pi and the Euler constant. }
\label{fig1}
\end{figure}

\section{Proofs of the main results}
\label{Proofs}
Recall that for any $n\ge 1$ and any compact subset $B$ of $\mathcal{D}$, $\delta_{n}\Lambda(B)=\sup\{|\Lambda(\lambda)-\Lambda_n(\lambda)|:\lambda\in B\}$, $\delta_n\Phi(B)=(\sup_{\lambda\in B}\|\lambda\|) \|S_n\Phi-S_n\Phi_n\|_\infty/n$, $\varLambda^*(B)=\sup\{\Lambda^*(\grad\Lambda(\lambda)):\lambda\in B\}$, $\xi_1(B)=\sup\{\|\grad\Lambda(\lambda)\|: \lambda\in B\}$, $\xi_2(B)=\sup\{\frac{1}{2}~^{t}\lambda {\rm D}^2\Lambda(\lambda)\lambda: \lambda\in B\}$, and $\xi(B)=\varLambda^*(B)+\xi_2(B)$. 
 
\medskip

The following lemma and corollary of its first part will be precious for us. The first part of the lemma can be found in (\cite{Rio00a}, p. 61), and the second one in \cite{BE}. Recall that given a real valued random variable $Y$ defined on $(\Omega,\mathcal A,\P)$, its quantile function $Q_Y$ is defined as the right-continuous inverse of the tail of $\P_{|Y|}$, the probability distribution of $|Y|$, {\em i.e.} 
$$
Q_Y(u)=\inf\{t\ge 0:\P(|Y|>t)\le u\} \quad (u\ge 0). 
$$
\begin{lem}\label{Rio0}
\begin{enumerate}
\item Let $(Y_j)_{j\ge 1}$ be a real valued and centered stationary process. 
For each $p\in (1,2)$ and $N\ge 1$ one has 
$$
\mathbb{E}\Big (\Big |\sum_{j=1}^NY_j\Big |^p\Big )\le C_p\,  N \int_0^1(\alpha^{-1}_{Y}(u))^{p-1} Q_Y(u)^p \, {\rm d}u,
$$ 
with $C_p=5^p\frac{p(5-2p)}{(p-1)(2-p)}$. 
\item Let $(Y_j)_{j\ge 1}$ be sequence of complex i.i.d. random variables. For each $p\in (1,2)$ and $N\ge 1$ one has 
$$
\mathbb{E}\Big (\Big |\sum_{j=1}^NY_j\Big |^p\Big )\le 2^p\,  N \mathbb{E}(|Y_1|^p).
$$ 
\end{enumerate}
\end{lem}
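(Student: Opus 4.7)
The plan is to recognize that both parts of the lemma are classical moment inequalities for partial sums whose proofs appear in the cited references, so my proposal is to outline the strategies one would follow rather than reproduce the technical accounting.

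For part (1), the approach rests on Rio's covariance inequality: for random variables $U,V$ with finite $L^2$ norm, $|\Cov(U,V)|\le 2\int_{0}^{2\alpha(\sigma(U),\sigma(V))} Q_U(u)Q_V(u)\,\ud u$. The plan is (a) to use the representation $|x|^{p}=c_{p}\int_{0}^{\infty}t^{p-2}(x^{2}\wedge t)\,\ud t$ (valid for $p\in(1,2)$), which reduces the $L^{p}$-moment of $S_{N}$ to an integral of second-order quantities; (b) to truncate each $Y_{j}$ at a level depending on $t$ so that a usable covariance bound is available on the truncated part while the tail part is controlled crudely in $L^{p}$; (c) to apply Rio's covariance inequality to pairs $(Y_{i},Y_{j})$ of truncated variables, producing for each pair a contribution proportional to $\int_{0}^{2\alpha_{X,|i-j|}}Q_{Y}(u)^{2}\,\ud u$; and (d) to sum over pairs and integrate in $t$. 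Summing in $|i-j|\ge 1$ combined with the change of variable $u=\alpha_{X,m}$ yields the exponent $p-1$ on $\alpha_{Y}^{-1}(u)$ and $p$ on $Q_{Y}(u)$, giving the stated form.

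For part (2), the plan is the inductive martingale argument of von Bahr and Esseen. One reduces, if necessary, to centered variables by a standard symmetrization with an independent copy $Y_{j}'$, which costs a factor bounded by $2^{p}$ through the triangle inequality $|Y_{j}-Y_{j}'|^{p}\le 2^{p-1}(|Y_{j}|^{p}+|Y_{j}'|^{p})$. One then writes $S_{N}=S_{N-1}+Y_{N}$ and applies the scalar inequality $|a+b|^{p}\le |a|^{p}+p\,\mathrm{sgn}(a)|a|^{p-1}b+c_{p}|b|^{p}$ valid for $p\in(1,2)$. Conditioning on $\sigma(Y_{1},\dots,Y_{N-1})$ eliminates the linear cross-term thanks to the i.i.d.\ structure; iterating gives $\E|S_{N}|^{p}\le N\,\E|Y_{1}|^{p}$ up to the factor coming from symmetrization. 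The complex case is handled either by splitting into real and imaginary parts or by noting that the above inequality carries over, with a constant absorbed into $2^{p}$.

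The main obstacle is the truncation balancing in part (1): the truncation level must simultaneously keep the covariance bound effective and the crude $L^{p}$ residue small, and extracting the sharp form of the integral $\int_{0}^{1}(\alpha_{Y}^{-1}(u))^{p-1}Q_{Y}(u)^{p}\,\ud u$ requires the careful change of variables at the heart of Rio's argument, including the explicit tracking of the constant $C_{p}=5^{p}p(5-2p)/((p-1)(2-p))$ which blows up at both endpoints of $(1,2)$. Part (2) is comparatively routine once the conditioning step is properly organized.
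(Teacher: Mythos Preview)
The paper does not actually prove this lemma: it simply records that part~(1) is taken from Rio's monograph \cite{Rio00a}, p.~61, and part~(2) from von Bahr--Esseen \cite{BE}, and then moves on. So there is no in-paper argument to compare against; your decision to sketch the strategies behind the cited results is the appropriate response.

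Your outlines are broadly faithful to those sources. For part~(1), Rio's proof does rest on his covariance inequality together with quantile/level-set accounting that produces the integrand $(\alpha_Y^{-1}(u))^{p-1}Q_Y(u)^p$, and the blow-up of $C_p$ at both endpoints of $(1,2)$ is genuine. For part~(2), the von Bahr--Esseen argument is indeed an induction exploiting centering and independence; one small remark is that their original proof does not use the pointwise inequality $|a+b|^{p}\le |a|^{p}+p\,\mathrm{sgn}(a)|a|^{p-1}b+c_{p}|b|^{p}$ you wrote (they proceed via an integral representation and a conditional-centering lemma), and in the real centered case they obtain the constant $2$ rather than $2^{p}$. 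The looser constant $2^{p}$ in the stated lemma is what one gets after absorbing the passage to complex values, as you note. None of this affects correctness: your plan identifies the right sources and the right mechanisms.
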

Then, the fact that $\int_0^1 Q_Y(u)^p \, du=\E |Y|^p$ together with H\"older's inequality yield
\begin{cor}\label{Rio}
Let $(Y_j)_{j\ge 1}$ be a real valued and centered stationary process. For each $p\in (1,2)$, $\epsilon>0$ and $N\ge 1$
$$
\mathbb{E}\Big (\Big |\sum_{j=1}^NY_j\Big |^p\Big )\le C_p N \left (\int_0^1(\alpha^{-1}_{Y}(u))^{(p-1)(1+\epsilon)/\epsilon} \, {\rm d}u\right)^{\epsilon/(1+\epsilon)} \big (\mathbb{E}|Y_1|^{(1+\epsilon)p} \big )^{1/(1+\epsilon)}.
$$
\end{cor}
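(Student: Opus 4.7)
The proof is essentially a one-line application of Hölder's inequality to the conclusion of Lemma~\ref{Rio0}(1), combined with the identity $\int_0^1 Q_Y(u)^r\,{\rm d}u=\mathbb{E}|Y|^r$ (valid for any $r>0$, since $Q_Y$ is the right-continuous inverse of the tail of the law of $|Y|$).

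The plan is the following. Starting from Lemma~\ref{Rio0}(1), we have
$$
\mathbb{E}\Big(\Big|\sum_{j=1}^N Y_j\Big|^p\Big)\le C_p\, N\int_0^1 (\alpha^{-1}_Y(u))^{p-1}\,Q_Y(u)^p\,{\rm d}u,
$$
so it suffices to bound the integral on the right-hand side. I would apply Hölder's inequality with the conjugate exponents $q=(1+\epsilon)/\epsilon$ and $q'=1+\epsilon$ (they satisfy $1/q+1/q'=1$), writing the integrand as the product $(\alpha^{-1}_Y(u))^{p-1}\cdot Q_Y(u)^p$. This gives
$$
\int_0^1 (\alpha^{-1}_Y(u))^{p-1}\,Q_Y(u)^p\,{\rm d}u\le \left(\int_0^1 (\alpha^{-1}_Y(u))^{(p-1)(1+\epsilon)/\epsilon}\,{\rm d}u\right)^{\epsilon/(1+\epsilon)}\left(\int_0^1 Q_Y(u)^{p(1+\epsilon)}\,{\rm d}u\right)^{1/(1+\epsilon)}.
$$

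The final step is to recognize the second factor: by the quoted identity applied with exponent $p(1+\epsilon)$ in place of $p$, one has $\int_0^1 Q_Y(u)^{p(1+\epsilon)}\,{\rm d}u=\mathbb{E}|Y_1|^{p(1+\epsilon)}$. Substituting back into the bound from Lemma~\ref{Rio0}(1) yields exactly the claimed inequality.

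There is no genuine obstacle here; the only thing to verify is the choice of conjugate exponents (which is forced by the desired shape of the right-hand side) and that $Q_Y^{p(1+\epsilon)}$ is indeed integrable when $\mathbb{E}|Y_1|^{p(1+\epsilon)}<\infty$, in which case the inequality is non-trivial (otherwise it is trivially true with an infinite right-hand side).
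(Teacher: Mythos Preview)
Your proof is correct and matches the paper's own argument exactly: the paper simply states that the corollary follows from Lemma~\ref{Rio0}(1) via H\"older's inequality together with the identity $\int_0^1 Q_Y(u)^p\,{\rm d}u=\mathbb{E}|Y|^p$, which is precisely what you carry out with the conjugate exponents $(1+\epsilon)/\epsilon$ and $1+\epsilon$.
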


We start with the most technical results, namely Theorem~\ref{th3},~\ref{thspeed} and~\ref{th2}.

\subsection{Proof of Theorem~\ref{th3}}

(1) For $\lambda\in\mathcal D$ and $n\ge 1$ we have 
\begin{equation}
\Lambda^\omega_n (\lambda)  =  \frac{1}{n} \log \frac{1}{k(n)}  \sum_{j=1}^{k(n)}  \exp \big (  \scal{\lambda}{S_n \Phi (T^{n(j-1)} X)}\big ) . 
\end{equation}
Fix $r_0>0$ such that $B(\lambda_0,r_0)\subset \mathcal D$. 
We must prove that we can find $r\in (0,r_0)$ such that  almost surely,  for all $\lambda\in B(\lambda,r)$, $\Lambda_n^\omega (\lambda)$ converges to $\Lambda (\mathbf {\lambda})$ as $n\to\infty$. 

\smallskip

 In order to exploit the mixing properties of the initial process $(X_1, \cdots)$, we use the uniform approximation of $S_n\Phi$ by the functions $S_n\Phi_n$.
For $\lambda\in\mathcal D$ and $n\ge 1$ let 
\begin{eqnarray*}
\Lambda^{(n),\omega}_n (\lambda)  =  \frac{1}{n} \log \frac{1}{k(n)}  \sum_{j=1}^{k(n)}  \exp \big (  \scal{\lambda}{S_n \Phi_n (T^{n(j-1)} X)}\big ) 
\end{eqnarray*}
and 
\begin{eqnarray*}
\nonumber \Lambda^{(n)}_n (\lambda) =  \frac{1}{n} \log \E \exp \big (  \scal{\lambda}{S_n \Phi_n (X)}\big ).
\end{eqnarray*}
By assumption {\bf (A3)} for all $\lambda\in B(\lambda_0,r_0)$ we have 
\begin{equation}\label{control1}
\max\big (|\Lambda^{(n),\omega}_n (\lambda) -\Lambda_n^\omega (\lambda)|, |\Lambda^{(n)}_n (\lambda) -\Lambda_n (\lambda)|\big )\le \delta_n\Phi(B(\lambda_0,r_0)), \text{ with } \lim_{n\to\infty}\delta_n\Phi(B(\lambda_0,r_0))=0.
\end{equation}
Consequently, it is enough to find $r\in (0,r_0)$ such that  almost surely, for all $\lambda\in B(\lambda_0,r)$ we have $\lim_{n\to\infty} \Lambda^{(n),\omega}_n (\lambda) - \Lambda^{(n)}_n (\lambda) =0$. 

\smallskip

For $\lambda\in B(\lambda_0,r_0)$ we write 
\begin{eqnarray}
 \nonumber \Lambda^{(n),\omega}_n (\lambda) &=& \Lambda^{(n)}_n (\lambda) +  \frac{1}{n} \log \left(       \frac{1}{k(n)}\sum_{j=1}^{k(n)} \exp\big ( \scal{\lambda}{ S_n \Phi_n(T^{n(j-1)} X)  } - n \Lambda^{(n)}_n (\lambda) \big )  \right), \\
\label{key} &=&\Lambda^{(n)}_n (\lambda) +  
  \frac{1}{n} \log \left ( 1 +    \frac{1}{k(n)}\sum_{j=1}^{k(n)} \Xn_j (\lambda) \right) ,
\end{eqnarray}
where 
\begin{equation}
\Xn_j (\lambda) =  \exp\big ( \scal{\lambda}{ S_n \Phi_n (T^{n(j-1)} X)  } - n \Lambda^{(n)}_n (\lambda) \big )  - 1.
\end{equation}
Now we notice that $(\Xn_j(\lambda))_{j\ge 1}$ is stationary and centered, and each $\Xn_j(\lambda)$ belongs to $\sigma (X^{(n)}_{j},X^{(n)}_{j+1})$ (recall \eqref{Xn}). Consequently, after writing 
$$
\mathbb{E}\Big (\Big |\sum_{j=1}^{k(n)} \Xn_j (\lambda)\Big |^p\Big )\le 2^{p-1} \mathbb{E}\Big (\Big |\sum_{1\le 2j\le k(n)}\Xn_{2j} (\lambda)\Big |^p\Big )+2^{p-1} \mathbb{E}\Big (\Big |\sum_{1\le 2j+1\le k(n)}\Xn_{2j+1} (\lambda)\Big |^p\Big ),
$$
we can apply Corollary~\ref{Rio} and get for $p\in (1,2)$ and $\epsilon >0$ 
\begin{equation}\label{esp}
\mathbb{E}\Big (\Big |\sum_{j=1}^{k(n)} \Xn_j (\lambda)\Big |^p\Big )\le M(n,p,\epsilon) k(n)\big (\mathbb{E}|\Xn_1(\lambda)|^{(1+\epsilon)p} \big )^{1/(1+\epsilon)}.
\end{equation}
where 
$$
M(n,p,\epsilon)=2^{p-1}C_p\left (\int_0^1(\alpha^{-1}_{X^{(n)}}(u))^{(p-1)(1+\epsilon)/\epsilon} \, {\rm d}u\right)^{\epsilon/(1+\epsilon)}.
$$ 

From now on we fix $p\in (1,2)$ close enough to one and $\epsilon_0>0$ small enough so that for all $\lambda\in B_0=B(\lambda_0,r_0/2)$ and $\epsilon\in (0,\epsilon_0)$ we have $p(1+\epsilon)\lambda\in \widetilde B_0=B(\lambda_0,r_0)$.

\medskip

We have (using successively the convexity of $u\ge 0\mapsto u^{(1+\epsilon)p}$ and the subadditivity of $v\ge 0\mapsto v^{1/(1+\epsilon)}$ to get the second and third lines)
\begin{eqnarray}
\nonumber && (\mathbb{E}|\Xn_1(\lambda)|^{(1+\epsilon)p} \big )^{1/(1+\epsilon)}\\&\le& \nonumber \Big (2^{p(1+\epsilon)-1}\big (\mathbb{E}\big [\exp\big ((1+\epsilon)p \scal{\lambda}{ S_n \Phi_n (T^{n(j-1)} X)  } - n(1+\epsilon)p  \Lambda^{(n)}_n (\lambda) \big ) \big ]+1\Big )^{1/(1+\epsilon)}\\
\nonumber &\le & 2^p\Big (1+\exp \Big [n\Big (\frac{\Lambda^{(n)}_n (p(1+\epsilon)\lambda)}{1+\epsilon}-p \Lambda^{(n)}_n (\lambda)\Big )\Big ]\Big )\\
\label{Z0n}&\le& 2^p\Big (1+\exp \Big [n\Big (\frac{\Lambda (p(1+\epsilon)\lambda)}{1+\epsilon}- p\Lambda(\lambda)+ 3\delta_n(\Lambda,\Phi)( \widetilde B_0)\Big )\Big ]\Big ),
\end{eqnarray}
where we have used {\bf (A1)} and {\bf (A3)}.  
Since $\Lambda$ is differentiable at $\lambda_0$, by using the first order Taylor expansion of $\Lambda$ at $\lambda_0$, for each $r\in (0,r_0/2)$, uniformly in $\lambda\in B(\lambda_0,r)$ we have 
\begin{eqnarray*}
\frac{\Lambda(p(1+\epsilon)\lambda)}{1+\epsilon}- p\Lambda(\lambda)
&=& (1-p)\big (\Lambda(\lambda_0)-\scal{\lambda_0}{\grad \Lambda (\lambda_0)}\big )+\xi(\epsilon)\epsilon+\eta(p,r)(p-1+r )\\
&=& (p-1)\Lambda^*(\grad \Lambda (\lambda_0))+\xi(\epsilon)\epsilon+\eta(p,r)(p-1+r ),
\end{eqnarray*}
where $\xi(\epsilon)$ is bounded over $(0,\epsilon_0)$ and $\eta(p,r)$ tends to $0$ as $p$ tends to $1^+$ and $r$ tends to $0^+$. This yields
%
\begin{eqnarray*}
&&(\mathbb{E}|\Xn_1(\lambda)|^{(1+\epsilon)p} \big )^{1/(1+\epsilon)}\\
&\le& 2^p\Big (1+\exp \big [n\big ((p-1)\Lambda^*(\grad \Lambda (\lambda_0))+\xi(\epsilon)\epsilon+\eta(p,r)(p-1+r )+ 3\delta_n(\Lambda,\Phi)( \widetilde B_0)\big )\big ]\Big )\\
&\le &2^{p+1} \exp \big [n\big ((p-1)\Lambda^*(\grad \Lambda (\lambda_0))+\xi(\epsilon)\epsilon+\eta(p,r)(p-1+r )+ 3\delta_n(\Lambda,\Phi)( \widetilde B_0)\big )\big ],
\end{eqnarray*}
the last inequality coming from the fact that ${\Lambda}^*\ge 0$. 

\medskip

Recall {\bf (A2)}. Let $h=(p-1)(1+\epsilon)/\epsilon$ and notice that $\alpha^{-1}_{X^{(n)}}\le \alpha^{-1}_{X}$. This yields
\begin{equation}\label{Mnp}
M(n,p,\epsilon)\le 2^{p-1}C_{p} M_{h}^{\epsilon/(1+\epsilon)}<\infty.
\end{equation}
Thus,  due to \eqref{esp}, for $n$ large enough so that $3\delta_n(\Lambda,\Phi)( \widetilde B_0)\le \epsilon$, uniformly in $\lambda\in B(\lambda_0,r)$ we have 
$$
\mathbb{E}\Big (\Big |\sum_{j=1}^{k(n)} \Xn_j (\lambda)\Big |^{p}\Big )\le 2^{2p} C_{p} M_{h}^{\epsilon/(1+\epsilon)}k(n) \exp \big [n\big ((p-1)\Lambda^*(\grad \Lambda (\lambda))+\widetilde \xi(\epsilon)\epsilon+\eta(p,r)(p-1+r )\big )\big ],
$$
where $\widetilde \xi(\epsilon)=\xi(\epsilon)+1$, hence 
\begin{eqnarray}
\label{estimp}&&\P \Big(  \Big| \frac{1}{k(n)}\sum_{j=1}^{k(n)} \Xn_{j}(\lambda) \Big| > \epsilon \Big) \le \epsilon^{-p}k(n)^{-p}\mathbb{E}\Big (\Big |\sum_{j=1}^{k(n)} \Xn_j (\lambda)\Big |^{p}\Big )\\
\nonumber &\le&2^{2p}  C_p M_{h}^{\epsilon/(1+\epsilon)}\epsilon^{-p}  k(n)^{1-p}\exp \big [n\big ((p-1)\Lambda^*(\grad \Lambda (\lambda))+\widetilde \xi(\epsilon)\epsilon+\eta(p,r)(p-1+r )\big )\big ]
.
\end{eqnarray}

Let $\eta>0$ such that $k(n)\ge \exp (n(\Lambda^*(\grad \Lambda (\lambda_0))+\eta))$ for $n$ large enough.  The previous inequality yields, for $n$ large enough, uniformly in $\lambda\in B(\lambda_0,r)$, 
\begin{eqnarray*}
\P \Big(  \Big| \frac{1}{k(n)}\sum_{j=1}^{k(n)} \Xn_{j}(\lambda) \Big| > \epsilon \Big)\le 2^{2p} C_p   M_{h}^{\epsilon/(1+\epsilon)}\epsilon^{-p} \exp \big (n\big ((1-p)\eta+\widetilde \xi(\epsilon)\epsilon+\eta(p,r)(p-1+r )\big )\big ).
\end{eqnarray*}
Hence, fixing $p$ close enough to 1 and  $r$ small enough so that $\eta(p,r)(p-1+r )\le (p-1)\eta/2$, we get 
$$
\P \Big(  \Big| \frac{1}{k(n)}\sum_{j=1}^{k(n)} \Xn_{j}(\lambda) \Big| > \epsilon \Big)\le 2^{2p} C_p   M_{h}^{\epsilon/(1+\epsilon)}\epsilon^{-p} \exp \big (n\big ((1-p)\eta/2+\widetilde \xi(\epsilon)\epsilon\big )\big ).
$$
Then,  for every $\epsilon$  small enough so that $\widetilde \xi(\epsilon)\epsilon\le (p-1)\eta/4$ we have
$$
\sum_{n\ge 1}\P \Big(  \Big| \frac{1}{k(n)}\sum_{j=1}^{k(n)} \Xn_{j}(\lambda) \Big| > \epsilon \Big)<\infty$$ 
for every $\lambda\in B(\lambda_0,r)$  (notice that at fixed $p$, $h$ tends to $\infty$ as $\epsilon$ tends to 0, this is why we need {\bf (A2)}). Now, by the Borel-Cantelli lemma and \eqref{key}, we can conclude that for every  $\lambda\in B(\lambda_0,r)$ we have $\lim_{n\to\infty} \Lambda^{(n),\omega}_n (\lambda) - \Lambda^{(n)}_n (\lambda) = 0$ almost surely, hence $\lim_{n\to\infty} \Lambda_n^\omega (\lambda)=\Lambda(\lambda)$ almost surely. From this we deduce that, with probability 1, $\lim_{n\to\infty} \Lambda_n^\omega (\lambda)=\Lambda(\lambda)$ for every $\lambda$ in a countable and dense subset of $B(\lambda_0,r)$. Since  the functions $\Lambda_n^\omega(\lambda)$ and $\Lambda$ are convex, we deduce from Theorem 10.8 in \cite{Roc} that almost surely, $\Lambda(\lambda)$ converges to $\Lambda(\lambda)$ for all $\lambda$ in $B(\lambda_0,r)$. 

\medskip

\noindent
(2) The first part is a direct consequence of (1) and Theorem~\ref{LDPloc}. 

Now let $x_0=\grad \Lambda (\lambda_0)$. Let $\eta>0$ such that $k(n)\le \exp(n(\Lambda^*(x_0)-\eta))$ for $n$ large enough. For $\epsilon$ small enough, if $n$ is large enough, we have 
\begin{eqnarray*}
\P\Big (\exists \, 1\le j\le k(n): \, \frac{S_n \Phi (T^{(j-1)n}X)}{n}\in B(x_0,\epsilon)\Big)&\le &k(n)\P\Big (\frac{S_n \Phi ( X)}{n}\in B(x_0,\epsilon)\Big)\\
&\le& \exp (-n\eta/2)
\end{eqnarray*}
by Theorem~\ref{LDPloc}. Thus, by the Borel-Cantelli lemma we see that if $\epsilon$ is small enough, with probability 1, for $n$ large enough $\big\{1\le j\le k(n): \, \frac{S_n \Phi (T^{(j-1)n}X)}{n}\in B(x_0,\epsilon)\big \}$ is empty.

\medskip

\noindent
(3) Let $P:t\ge 0\mapsto \Lambda (t\lambda_0)$. Without loss of generality we assume that $\lambda_0\neq 0$. Notice that $P$ is differentiable at 1 since $\Lambda$ is differentiable at $\lambda_0$, and our assumption on the strict convexity of $\Lambda$ implies that $\Lambda^*(x_0)=P^*(P'(1))>0$. Moreover, by our assumption on the strict convexity of $\Lambda$, at each point $t\in (0,1)$ at which $P$ is differentiable we have $P'(t)<P'(1)$ and $P^*(P'(t))<P^*(P'(1))$.  Consequently, if we set  $P_n^\omega(t)=\Lambda_n^\omega(t\lambda_0)$, we deduce from Theorem~\ref{th3}(1) that with probability 1, for all $t\in (0,1)$, $P_n^\omega(t)$ converges to $P(t)$ as $n$ tends to $\infty$.  Now, we notice that for any $s>1$, by the superadditivity of $y\ge 0\mapsto y^s$ and the definition of $P^\omega_n$, we have for $\eta\in (0,1)$ that $P_n^\omega(s(1-\eta))\le sP_n^\omega(1-\eta) +(s-1)\log (k_n)/n$. Thus, due to our assumption on $k(n)$, $\limsup_{n\to\infty}P_n^\omega(s(1-\eta))\le sP(1-\eta) +(s-1) \Lambda^*(x_0)$. If $t>1$, for each $\eta\in(0,1)$, if we set $s=t/(1-\eta)$, we get $\limsup_{n\to\infty}P_n^\omega(t)\le tP(1-\eta)/(1-\eta)+ (t-1+\eta)\Lambda^*(x_0)/(1-\eta)$.  
Consequently,  $\limsup_{n\to\infty}P_n^\omega(t)\le tP(1)+ (t-1)\Lambda^*(x_0)=\Lambda(\lambda_0)+(t-1)\scal{\lambda_0}{x_0}$.  

On the other hand, by convexity, for all $n\ge 1$ and $\eta\in (0,1)$, for $t>1$ we have $P_n^\omega(t)\ge P_n^\omega(1-\eta)+ (t-1+\eta) (P_n^\omega)'(1-\eta)$. Thus $\liminf_{n\to\infty}P_n^\omega(t)\ge P(1-\eta)+ (t-1+\eta) P'(1-\eta)$ for each $\eta$ so that $P'(1-\eta)$ exists. Letting $\eta$ go to 1, we get $\liminf_{n\to\infty}P_n^\omega(t)\ge P(1)+ (t-1) P'(1)=\Lambda(\lambda_0)+(t-1)\scal{\lambda_0}{x_0}$. Thus we have the conclusion.

\subsection{Proof of Theorem~\ref{thspeed}} Since $\Lambda$ is twice continuously differentiable,  by using the second order Taylor expansion of $\Lambda$ we can get for all $\lambda\in B_{\rho/2}$ and for all $p\in (1,2)$ and $\epsilon>0$ such that $p(1+\epsilon)B_{\rho/2}\subset B_\rho$ 
\begin{eqnarray*}
\frac{\Lambda(p(1+\epsilon)\lambda)}{1+\epsilon}- p\Lambda(\lambda)
= \frac{(p(1+\epsilon)-1)\Lambda^*(\grad \Lambda (\lambda))+\delta (p,\epsilon)}{1+\epsilon},
\end{eqnarray*}
where $|\delta(p,\epsilon)|\le \xi_2(B_\rho)(p(1+\epsilon)-1)^2$. Consequently, for all $\lambda\in B_{\rho/2}$ and for all $p\in (1,2)$ and $\epsilon\in (0,1/2)$ such that $p(1+\epsilon)B_{\rho/2}\subset B_\rho$,
\begin{multline*}
\Big |\frac{\Lambda(p(1+\epsilon)\lambda)}{1+\epsilon}- p\Lambda(\lambda)-(p-1)\Lambda^*(\grad \Lambda (\lambda))\Big | \\
\le \frac{ \Lambda^*(\grad \Lambda (\lambda))\epsilon+\delta (p,\epsilon)}{1+\epsilon}
\le  \Lambda^*(\grad \Lambda (\lambda))\epsilon+ \xi_2(B_\rho)(p-1)^2+ (p^2\epsilon+2p(p-1))\xi_2(B_\rho)\epsilon\\
\le \xi_2(B_\rho)(p-1)^2+(p^2\epsilon+2p(p-1)+1)(\xi_1(B_\rho)+\xi_2(B_\rho))\epsilon\\=\xi_2(B_\rho)(p-1)^2+(p^2\epsilon+2p(p-1)+1)\xi(B_\rho)\epsilon.
\end{multline*}
Thus, for $p$ close enough to $1$ and $\epsilon$ close enough to $0$, 
$$
\Big |\frac{\Lambda(p(1+\epsilon)\lambda)}{1+\epsilon}- p\Lambda(\lambda)-(p-1)\Lambda^*(\grad \Lambda (\lambda))\Big |\le \xi_2(B_\rho)(p-1)^2+2\xi(B_\rho)\epsilon.
$$
Let $(k(n))_{n\ge 1}$ and $(\epsilon_n)_{n\ge 1}$ be as in the statement, and take $\epsilon=\epsilon_n$ and $p=p_n=1+\sqrt{\epsilon_n}$. Defining the variables $\Xn_j$ as in the proof of Theorem~\ref{th3}, by using \eqref{Z0n} and \eqref{estimp} we can get 
\begin{eqnarray}\label{estimproba}
\P \Big(  \Big| \frac{1}{k(n)}\sum_{j=1}^{k(n)} \Xn_{j}(\lambda) \Big| > \epsilon_n \Big)\le 2^{2p_n} C_{p_n}   M_{h_n}^{\epsilon_n/(1+\epsilon_n)}\epsilon_n^{-p_n} \exp(-n\tau(n,\lambda))) \end{eqnarray}
with $h_n=(p_n-1)(1+\epsilon_n)/\epsilon_n$ and $\tau(n,\lambda)=\big ((p_n-1)(\log (k(n))/n-\Lambda^*(\grad \Lambda(\lambda))-\xi_2(B_\rho)(p_n-1)^2-2\xi(B_\rho)\epsilon_n-3\delta_n\Lambda(B_\rho)-3\delta_n\Phi(B_\rho)\big )$. We have (recall the value of $C_p$ given in Lemma~\ref{Rio0}(1))
$$
2^{2p_n} C_{p_n}  \epsilon_n^{-p_n}=O(\epsilon_n^{-3/2-\sqrt{\epsilon_n}})=O(\epsilon_n^{-3/2})
$$
as $n$ tends to $\infty$. Moreover, 
$$
\tau(n,\lambda)\ge \tau(n)= \sqrt{\epsilon_n}(\log (k(n))/n -\varLambda^*(B))-3(\xi(B_\rho)\epsilon_n+\delta_n\Lambda(B_\rho)+\delta_n\Phi(B_\rho)),
$$
and an estimation provided at the end of this proof shows that $M_{h_n}^{\epsilon_n/(1+\epsilon_n)}=O(1)$ as $n$ tends to $\infty$. Thus,  
$$
\P \Big(  \Big| \frac{1}{k(n)}\sum_{j=1}^{k(n)} \Xn_{j}(\lambda) \Big| > \epsilon_n \Big)=O\big(\epsilon_n^{-3/2}\exp (-n\tau(n))\big ).
$$ 
Now, let $g(n)=\lfloor \log_2(\sqrt{d}/\epsilon_n)\rfloor +1$ and $\mathcal{G}_n(B_{\rho/2})=\{(k_1,\dots,k_d)\in \Z^d: (k_12^{-g(n)},\dots,k_d2^{-g(n)})\in B_{\rho/2}\}$. There exists a constant $C(B_{\rho/2})$ depending on the volume of $B_{\rho/2}$ only such that $\#\mathcal{G}_n(B_{\rho/2})\le C(B_{\rho/2})\epsilon_n^{-d}$, hence
$$
\P \Big( \exists \lambda\in\mathcal{G}_n(B_{\rho/2}): \Big| \frac{1}{k(n)}\sum_{j=1}^{k(n)} \Xn_{j}(\lambda) \Big| > \epsilon_n \Big)=O\big(\epsilon_n^{-(3/2+d)}\exp (-n\tau(n))\big ),
$$
and due to \eqref{BC}, the Borel-Cantelli lemma ensures that, with probability 1, for $n$ large enough, for all $\lambda\in\mathcal{G}_n(B_{\rho/2})$, $\Big| \frac{1}{k(n)}\sum_{j=1}^{k(n)} \Xn_{j}(\lambda) \Big| \le \epsilon_n$. This can  be used in \eqref{key} and combined with \eqref{control1} to get for $n$ large enough
\begin{equation}\label{dyadique}
\sup_{\lambda_n\in \mathcal{G}_n(B_{\rho/2})}|\Lambda_n^\omega(\lambda_n)-\Lambda(\lambda_n)|\le (\epsilon_n+\epsilon_n^2)/n+ \delta_n\Lambda(B_\rho)+\delta_n\Phi(B_\rho).
\end{equation}
Since $(\mathcal{G}_n(B_{\rho/2}))_{n\ge 1}$ is increasing and $\bigcup_{n\ge 1}\mathcal{G}_n(B_{\rho/2})$ is dense in $B_{\rho/2}$, the convexity of $\Lambda_n^\omega$ and $\Lambda$ ensures that $\Lambda_n^\omega$ converges uniformly to $\Lambda$ over $B_{\rho/2}$, and $\grad \Lambda_n^\omega$ converges uniformly to $\grad \Lambda$ over $B$ (see \cite{Roc}, Th. 10.8 and 25.7). Thus, for any $\eta>0$, if $n$ is large enough, we have both \eqref{dyadique} and $\sup_{\lambda\in B} \|\grad \Lambda_n^\omega-\grad \Lambda\|\le \eta/2$, so that for all $\lambda\in B$, we can choose $\lambda_n\in \mathcal{G}_n(B)$ such that $\|\lambda-\lambda_n\|\le \epsilon_n$, hence 
\begin{eqnarray*}
|\Lambda_n^\omega(\lambda)-\Lambda(\lambda)|&\le& |\Lambda_n^\omega(\lambda_n)-\Lambda(\lambda_n)|+|\Lambda_n^\omega(\lambda)-\Lambda_n^\omega(\lambda_n)|+
|\Lambda(\lambda)-\Lambda(\lambda_n)|\\
&\le& (\epsilon_n+\epsilon_n^2)/n+\delta_n\Lambda(B_\rho)+\delta_n\Phi(B_\rho)+(\eta/2+2\max_{\lambda'\in B}\|\grad \Lambda\|) \|\lambda-\lambda_n\|\\
&\le& (\eta+2\max_{\lambda'\in B}\|\grad \Lambda\|)\epsilon_n+\delta_n\Lambda(B_\rho)+\delta_n\Phi(B_\rho). 
\end{eqnarray*}

It remains to prove that $M_{h_n}^{\epsilon_n/(1+\epsilon_n)}=O(1)$ as $n$ tends to $\infty$. Due to {\bf (A2')}, there exists $C>0$ such that $\alpha_X(u)^{-1}\le C|\log(u)|^{1/\theta}$ for all $u\in (0,1]$. This yields for $h>0$
\begin{multline*}
M_h=\int_0^1(\alpha_X(u)^{-1})^h\,{\rm d}u\le \int_0^1C^h|\log(u)|^{h/\theta}\,{\rm d}u\\=C^h\Gamma(1+h/\theta)=O\Big(C^h (N(h,\theta)/ e)^{N(h,\theta)}\sqrt{2\pi N(h,\theta)}\Big ),
\end{multline*}
where $N(h,\theta)=\lfloor h/\theta\rfloor +1$ and we have use Stirling's formula. 

Now, we can use the fact that $h_n=(1+\epsilon_n)/\sqrt{\epsilon_n}$ and the estimate above to conclude that $M_{h_n}^{\epsilon_n/(1+\epsilon_n)}=O(1)$ as $n$ tends to $\infty$.

\subsection{Proof of Theorem~\ref{th2}} Here we have $\delta_n\Lambda=\delta_n\Phi=0$, so that with respect to the proof of Theorem~\ref{thspeed}, we can consider  the centered, independent and identically distributed variables 
$$
Z_{n,j} (\lambda) =  \exp\big ( \scal{\lambda}{ S_n \Phi (T^{nj} X)  } - n \Lambda (\lambda) \big )  - 1,
$$ 
instead of the $Z^{(n)}_j (\lambda)$, with $\Lambda(\lambda)=\log \E(\exp\scal{\lambda}{X})$. Now we can use Lemma~\ref{Rio}(2) instead of Lemma~\ref{Rio}(1). This yields, for $p$ small enough so that $pB\subset B_\rho$ and $\lambda\in B$
\begin{eqnarray*}
\mathbb{E}\Big (\Big |\sum_{j=1}^{k(n)} Z_{n,j} (\lambda)\Big |^p\Big )&\le &2^p k(n)\big (\mathbb{E}|Z_{n,1}(\lambda)|^{p} \big )\\
&\le& 2^{2p-1} k(n)\big (1+ \E \exp(p\scal{\lambda }{ S_n \Phi (T^{nj} X)} -np\Lambda(\lambda) )\big)\\
&=&2^{2p-1} k(n)\big (1+\exp\big ( n(\Lambda(p\lambda)-p\Lambda(\lambda))\big )\big )\\
&\le &2^{2p}k(n) \exp \big (n[(p-1)\Lambda^*(\grad\Lambda (\lambda))+ (p-1)^2\xi_2(B_\rho)]\big)  .
\end{eqnarray*}
The proof finishes as that of Theorem~\ref{thspeed}(1).

\subsection{Proof of Theorem~\ref{ASLDP}} (1) Fix $\theta>0$. For $n\ge 1$, we denote by $\mu^{(n)}_n$ the probability distribution of $S_n\Phi_n(X)/n$ and by $\mu_n^{(n),\omega}$ the empirical distribution of $(S_n\Phi_n(T^{(j-1)}X(\omega))/n$. 

Recall that $(\delta_n)_{n\ge 1}$ is defined in {\bf (A3)}. Let $B_n\in\{B(x,r-\delta_n),B(x,r+\delta_n)\}$. We can estimate $\P\big(\big |\mu_n^{(n),\omega}(B_n)-\mu_n^{(n)}(B_n)\big |\ge \theta\mu_n^{(n)}(B_n)\big )$ as when we get \eqref{esp} thanks to Corollary~\ref{Rio}. 

Fix $p\in (1,2)$, $\epsilon>0$ and $h=(p-1)(1+\epsilon)/\epsilon$.  We have 
\begin{eqnarray*}
&&\P\big(\big |\mu_n^{(n),\omega}(B_n)-\mu_n^{(n)}(B_n)\big |\ge\theta\mu_n^{(n)}(B_n)\big )\\
&\le & (\theta\mu_n^{(n)}(B_n))^{-p}\mathbb{E}\Big( \frac{1}{k(n)^p}\Big|\sum_{j=1}^{k(n)}\mathbf{1}_{B_n}(S_n\Phi_n(T^{(j-1)n}X)/n)-\P(S_n\Phi_n(X)/n\in B_n )\Big|^p\Big)\\
&\le &2^{p-1}C_pM_h^{\epsilon/(1+\epsilon)} (\theta\mu_n^{(n)}(B_n))^{-p}
k(n)^{1-p} \mathbb{E}\Big(\Big|\mathbf{1}_{B_n}(S_n\Phi_n(X)/n)-\P(S_n\Phi_n(X)/n\in B_n )\Big|^{p(1+\epsilon)}\Big)^{1/(1+\epsilon)}\\
&\le & 2^{p-1}C_pM_h^{\epsilon/(1+\epsilon)} (\theta\mu_n^{(n)}(B_n))^{-p}
k(n)^{1-p}\\
& & \quad \cdot  \Big(2^{p(1+\epsilon)-1}(\mathbb{P}(S_n\Phi_n(X)/n\in B_n)+\P(S_n\Phi_n(X)/n\in B_n )^{p(1+\epsilon)}\Big)^{1/(1+\epsilon)}\\
&\le& 2^{2p-1}C_pM_h^{\epsilon/(1+\epsilon)} \theta^{-p}k(n)^{1-p} \mu^{(n)}_n(B_n)^{-p} (\mu^{(n)}_n(B_n)^{1/(1+\epsilon)}+\mu^{(n)}_n(B_n)^p)\\
&\le & 2^{2p}C_p M_h^{\epsilon/(1+\epsilon)} \theta^{-p}k(n)^{1-p} \mu^{(n)}_n(B_n)^{-p+1/(1+\epsilon)}\\
&\le& 2^{2p}C_p M_h^{\epsilon/(1+\epsilon)} \theta^{-p}k(n)^{1-p}\mu_n(B(x,r-2\delta_n))^{-p+1/(1+\epsilon)}.
\end{eqnarray*}
Now suppose that $\displaystyle \liminf_{n\to\infty} \frac{\log k(n)}{n}>I(x)$ and let $\eta>0$ such that $k(n)\ge \exp(n(I(x)+\eta))$ for $n$ large enough. Since $(\mu_n)_{n\ge 1}$ satisfies the LDP with rate function $I$, for $n$ large enough we have $\mu_n(B(x,r-2\delta_n))\ge \exp (-n(I(x)+\eta/4))$. Consequently, we can choose $\epsilon$ small enough so that $k(n)^{1-p}\mu_n(B(x,r-2\delta_n))^{-p+1/(1+\epsilon)}\le \exp (-n(p-1)\eta/2)$ for $n$ large enough. Then, the previous bound for $\P\big(\big |\mu_n^{(n),\omega}(B_n)-\mu_n^{(n)}(B_n)\big |\ge\theta\mu_n^{(n)}(B_n)\big )$ yields $\sum_{n\ge 1}  \P\big(\big |\mu_n^{(n),\omega}(B_n)-\mu_n^{(n)}(B_n)\big |\ge\theta\mu_n^{(n)}(B_n)\big )<\infty$ for any $\theta>0$. Hence, by the Borel-Cantelli lemma we get that with probability one,  $\lim_{n\to\infty}\frac{1}{n}\log \frac{\mu_n^{(n),\omega}(B_n)}{\mu_n^{(n)}(B_n)}=0$ for $B_n\in\{B(x,r-\delta_n),B(x,r+\delta_n)\}$. 

Moreover,  we have $\mu_n^{(n),\omega}(B(x,r-\delta_n))\le \mu^\omega_n(B(x,r))\le \mu_n^{(n),\omega}(B(x,r+\delta_n))$, and on the other hand we have $\mu_n(B(x,r-2\delta_n))\le  \mu_n^{(n)}(B(x,r-\delta_n))\le \mu_n(B(x,r))\le \mu_n^{(n)}(B(x,r+\delta_n))\le \mu_n(B(x,r+2\delta_n))$. 

Consequently, for any $r>0$, with probability 1, 
\begin{multline*}
\liminf_{n\to\infty}-\frac{1}{n}\log \mu_n(B(x,r+2\delta_n))\le \liminf_{n\to\infty}-\frac{1}{n}\log \mu^\omega_n(B(x,r))\\\le \limsup_{n\to\infty}-\frac{1}{n}\log \mu^\omega_n(B(x,r))\le\limsup_{n\to\infty}-\frac{1}{n}\log \mu_n(B(x,r-2\delta_n)).
\end{multline*}
This implies that with probability 1, for all $r\in\mathbb {Q}_+^*$  we have 
\begin{multline*}
\liminf_{n\to\infty}-\frac{1}{n}\log \mu_n(B(x,3r/2))\le \liminf_{n\to\infty}-\frac{1}{n}\log \mu^\omega_n(B(x,r))\\\le \limsup_{n\to\infty}-\frac{1}{n}\log \mu^\omega_n(B(x,r))\le\limsup_{n\to\infty}-\frac{1}{n}\log \mu_n(B(x,r/2)).
\end{multline*}
But since  $(\mu_n)_{n\ge 1}$ satisfies the LDP with rate function $I$, we have for all $y\in\mathcal Y$ (see \cite{DZ}, Th. 4.1.18)
\begin{equation}\label{LDPgene}
\displaystyle \lim_{s\to 0^+}-\liminf_{n\to\infty} \frac{1}{n}\log\mu_n(B(y,s))=\lim_{s\to 0^+}-\limsup_{n\to\infty} \frac{1}{n}\log\mu_n(B(y,s))=I(y).
\end{equation}
This, together with the previous inequalities yields the desired result. 

At last, suppose that $\displaystyle \limsup_{n\to\infty} \frac{\log k(n)}{n}<I(x)$. An estimate similar to that used to establish the second part of Theorem~\ref{th3}(2) yields the desired result.

\medskip

\noindent
(2) Let $B \in B_{\mathcal Y}$ and $\gamma> 0$. If $\mu_n (B)>0$ we have 
\begin{eqnarray*}
\P(\mu_n^{\omega}(B)> \exp( n\gamma)\mu_n(B)  )&\le& 
\P\Big (\sum_{j=1}^{k(n)} \mathbf{1}_{B}(S_n\Phi(T^{(j-1)n}X)/n)> k(n) \exp( n\gamma) \mu_n(B)\Big ) \\
&\le & k(n)^{-1} \exp( -n\gamma) \mu_n(B)^{-1}\mathbb{E} \Big (\sum_{j=1}^{k(n)} \mathbf{1}_{B}(S_n\Phi(T^{(j-1)n}X)/n)\Big )\\
&=&\exp( -n\gamma),
\end{eqnarray*}
and clearly if $\mu_n(B)=0$ then $\mu^\omega_n(B)=0$ almost surely so that we also have  $\P(\mu_n^{\omega}(B)> \exp( n\gamma)\mu_n(B) )\le\exp( -n\gamma) $. 

Now fix $r>0$ and take $B=B(x,r)$. Since $\sum_{n\ge 1} \exp( -n\gamma) <\infty$, the Borel-Cantelli lemma yields $\limsup_{n\to\infty}\frac{1}{n}\log \mu^{\omega}_n(B(x,r))\le \gamma+\limsup_{n\to\infty}\frac{1}{n}\log \mu_n(B(x,r))$ almost surely. This holds for all $\gamma>0$, so $\limsup_{n\to\infty}\frac{1}{n}\log \mu^{\omega}_n(B(x,r))\le\limsup_{n\to\infty}\frac{1}{n}\log \mu_n(B(x,r))$ almost surely. This is enough to conclude thanks to \eqref{LDPgene} and the fact that $I(x)=\infty$. 

\medskip

\noindent (3) Let $\alpha<\infty$ and $K_{\alpha}\subset \mathcal Y$, a compact set such that $\limsup_{n\to\infty} \frac{1}{n}\log\mu_n(K_\alpha^c)\le -2\alpha$. By using the estimate obtained above with $B=K_\alpha^c$  and $\gamma=\alpha$ we get that with probability 1,  $\limsup_{n\to\infty}  \frac{1} {n}\log \mu^\omega_n(K_\alpha^c)\le -\alpha$.

\subsection{Proof of Theorem~\ref{corASLDP}} Our goal is to prove that, with probability 1, for all $y\in \mathcal Y$ we have 
\begin{equation}\label{LDPomega}
\lim_{r\to 0^+} \liminf_{n\to\infty}-\frac{1}{n}\log \mu^\omega_n(B(y,r))=\lim_{r\to 0^+} \limsup_{n\to\infty}-\frac{1}{n}\log \mu^\omega_n(B(y,r))=I(y).
\end{equation}
Then, due to Theorem 4.1.11 in \cite{DZ}, we have the desired almost sure weak LDP.
  
\medskip

Let $\mathcal{D}$ be a dense countable subset of $\mathcal{D}_I$. We can deduce from the end of the proof of Theorem~\ref{ASLDP}(1) that there exists a measurable subset $\Omega'$ of $\Omega$ such that $\P(\Omega')=1$ and for all $\omega\in\Omega'$, for all $x\in\mathcal{D}$ and for all $r\in\mathbb {Q}_+^*$ we have 
\begin{multline*}
\liminf_{n\to\infty}-\frac{1}{n}\log \mu_n(B(x,3r/2))\le \liminf_{n\to\infty}-\frac{1}{n}\log \mu^\omega_n(B(x,r))\\\le \limsup_{n\to\infty}-\frac{1}{n}\log \mu^\omega_n(B(x,r))\le\limsup_{n\to\infty}-\frac{1}{n}\log \mu_n(B(x,r/2)).
\end{multline*}
Now let $y\in\mathcal{D}_I$. For all $s>0$ we can find $x\in\mathcal{D}$ as well as a rational number $0<r<s$ such that $  B(y, s/4)\subset B(x, r/2) \subset B(y,s)\subset B(x,3/2r) \subset  B(y,2s)$. Consequently, for all $\omega\in \Omega'$, $y\in\mathcal{D}_I$ and $r>0$ we have 
\begin{multline*}
\liminf_{n\to\infty}-\frac{1}{n}\log \mu_n(B(y,2s))\le \liminf_{n\to\infty}-\frac{1}{n}\log \mu^\omega_n(B(y,s))\\\le \limsup_{n\to\infty}-\frac{1}{n}\log \mu^\omega_n(B(y,s))\le\limsup_{n\to\infty}-\frac{1}{n}\log \mu_n(B(y,s/4)).
\end{multline*}
Due to \eqref{LDPgene},  for all $\omega\in \Omega'$ and $y\in\mathcal{D}_I$ we get 
$$
\lim_{s\to 0^+} \liminf_{n\to\infty}-\frac{1}{n}\log \mu^\omega_n(B(y,s))=\lim_{s\to 0^+} \limsup_{n\to\infty}-\frac{1}{n}\log \mu^\omega_n(B(y,s))=I(y),
$$
that is \eqref{LDPomega} for $y\in \mathcal{D}_I$. 

Now suppose that $\mathcal Y\setminus \mathcal{D}_I\neq \emptyset$ and let $\mathcal{D}'$ be a dense subset of $\mathcal Y\setminus \mathcal{D}_I$.  Due to the facts established in the proof of Theorem~\ref{ASLDP}(2), there exists a measurable subset $\Omega'$ of $\Omega$ such that $\P(\Omega')=1$ and for all $\omega\in\Omega'$, for all $x\in\mathcal{D}'$, for all  $r\in\mathbb {Q}_+^*$ we have $\limsup_{n\to\infty}\frac{1}{n}\log \mu^{\omega}_n(B(x,r))\le \limsup_{n\to\infty}\frac{1}{n}\log \mu_n(B(x,r))$. 

Now, for all $y\in \mathcal Y\setminus \mathcal{D}_I$ and $s>0$, we can find $x\in \mathcal{D}'$ and $0<s<r\in \mathbb Q$ such that $B(y,s)\subset B(x,r)\subset B(y,2s)$, and the previous inequality yields, for all $\omega'\in\Omega'$, $\limsup_{n\to\infty}\frac{1}{n}\log \mu^{\omega}_n(B(y,s))\le \limsup_{n\to\infty}\frac{1}{n}\log \mu_n(B(y,2s))$. This yields \eqref{LDPomega}.

\section{Proofs of Theorem~\ref{brown} and \ref{brownfonc}}\label{pfbrown}

\subsection{Proof of Theorem~\ref{brown}}\label{pfbrown1}
Each interval $J_{k(n),j}$ can be decomposed into a union of $n$ consecutive closed intervals $J_{k(n),j,i}$ of length $1/nk(n)$. The increments $\Delta W(J_{k(n),j,i})$ take the form $(nk(n))^{-1/2} X_{k(n),j,i}$, where $ ( X_{k(n),j,i})_{\substack{1\le j\le k(n)\\1\le i\le \kappa(n)}}$ is a family of $nk(n)$ centered Gaussian vectors of covariance matrix the identity. Thus $\Delta W (J_{k(n),j})=(nk(n))^{-1/2} S_n(j)$ with $S_n(j)=\sum_{i=1}^{n} X_{k(n),j,i}$. Let 
$$
Z_{n,j}(\lambda)= \exp\big ( \scal{\lambda}{ S_n(j)} - n \Lambda (\lambda) \big )  - 1,
$$
with $\Lambda(\lambda)=\log \E(\exp\scal{\lambda}{X_{k(n),j,i}})=\|\lambda\|^2/2$, hence $\Lambda^*(\grad\Lambda(\lambda))=\|\lambda\|^2/2$ and $~^t\lambda {\rm D}^2\Lambda(\lambda)\lambda=\|\lambda\|^2$ for all $\lambda\in \R^d$. As in the proof of Theorem~\ref{th2} we have 
\begin{eqnarray*}
\mathbb{E}\Big (\Big |\sum_{j=1}^{k(n)} Z_{n,j} (\lambda)\Big |^p\Big )\le 2^{2p-1} k(n)\big (1+\exp\big ( n(\Lambda(p\lambda)-p\Lambda(\lambda))\big )\big )
\end{eqnarray*}
which, due to the special form of $\Lambda$, yields 
\begin{eqnarray*}
\mathbb{E}\Big (\Big |\sum_{j=1}^{k(n)} Z_{n,j} (\lambda)\Big |^p\Big )\le 2^{2p}k(n) \exp \big (n[(p-1)\|\lambda\|^2/2+ (p-1)^2\|\lambda\|^2/2]\big).   
\end{eqnarray*}
Then, we can use the same approach as that used in the proof of Theorem~\ref{thspeed} to get that under \eqref{condbrown}, with probability 1, 
$$
\lim_{n\to\infty} \Big (\Lambda^\omega_n(\lambda)=\frac{1}{n} \log \frac{1}{k(n)}  \sum_{j=1}^{k(n)}  \exp \big (  \scal{\lambda}{S_n(j)}\big )\Big )=\Lambda(\lambda)=\|\lambda\|^2/2 
$$
for a dense and countable subset of points $\lambda\in B$, hence for all $\lambda\in B$ by convexity of the functions $\Lambda^\omega_n$. This is enough to get the result.

\subsection{Proof of Theorem~\ref{brownfonc}}\label{pfbrown2} We let the reader adapt the lines of the proof of Theorem~\ref{corASLDP} to the present situation. The only change is that here for each $n\ge 1$ one must consider the i.i.d sequence of Brownian motions obtained by juxtaposition of the $k(n)$ sequences of $n$ Brownian motions $\big ((W_{k(n),j,i})_{t\in [0,1]}\big )_{1\le i\le n}$, $1\le j\le k(n)$, where $W_{k(n),j,i}(t)=(nk(n))^{1/2} \big (W(\frac{j-1}{k(n)}+\frac{i-1+ t}{nk(n)})- W(\frac{j-1}{k(n)}+\frac{i-1}{nk(n)})\big )$, so that $W_{k(n),j}/n^{1/2}=S_n(j)/n$ with $S_n(j)=\sum_{i=1}^n W_{k(n),j,i}$.

\end{document}